\documentclass[a4paper]{article}
\usepackage{mathrsfs,xcolor}
\usepackage{amsmath}
\usepackage{amssymb}
\usepackage{amsthm}
\usepackage{graphicx}
\usepackage{fancyhdr}
\usepackage{appendix}
\usepackage{enumerate}
\usepackage{soul}

\usepackage{cases}

\usepackage[margin=1in]{geometry}
\usepackage{comment}

\usepackage{cite}

\usepackage{amscd, amsthm}
\usepackage{mathtools}
\usepackage{mathrsfs}
\usepackage{bm}

\catcode`\@=11 \@addtoreset{equation}{section}

\catcode`\@=12
\usepackage{color}

\usepackage{framed}

\usepackage[colorlinks,
            linkcolor=blue,
            anchorcolor=blue,
            citecolor=red,
            ]{hyperref}
\newtheorem{Theorem}{Theorem}[section]
\newtheorem{Proposition}[Theorem]{Proposition}
\newtheorem{Lemma}[Theorem]{Lemma}
\newtheorem{Corollary}[Theorem]{Corollary}
\newtheorem{Definition}[Theorem]{Definition}

\newtheorem{Remark}[Theorem]{Remark}

\newcommand{\newcom}{\newcommand}
\newcommand{\bTheorem}[1]{
\begin{Theorem} \label{T#1} }
\newcommand{\eT}{\end{Theorem}}

\newcommand{\bProposition}[1]{
\begin{Proposition} \label{P#1}}
\newcommand{\eP}{\end{Proposition}}

\newcommand{\bLemma}[1]{
\begin{Lemma} \label{L#1} }
\newcommand{\eL}{\end{Lemma}}

\newcommand{\bCorollary}[1]{
\begin{Corollary} \label{C#1} }
\newcommand{\eC}{\end{Corollary}}

\newcommand{\beq}{\begin{equation}}
\newcommand{\eeq}{\end{equation}}
\newcom{\ben}{\begin{eqnarray}}
\newcom{\een}{\end{eqnarray}}
\newcom{\beno}{\begin{eqnarray*}}
\newcom{\eeno}{\end{eqnarray*}}
\newcom{\bali}{\begin{aligned}}
\newcom{\eali}{\end{aligned}}
\usepackage{mdframed}
\newcommand{\bFormula}[1]{
\begin{equation} \label{#1}}
\newcommand{\eF}{\end{equation}}

\newcommand{\f}{\frac}
\newcommand{\Om}{\Omega}

\newcommand{\p}{\partial}
\newcommand{\ess}{\rm ess}
\newcommand{\res}{\rm res}
\newcommand{\Ov}[1]{\overline{#1}}

\newcommand{\vr}{\varrho}
\newcommand{\vre}{\vr_\ep}

\newcommand{\vu}{\mathbf{u}}
\newcommand{\vU}{\mathbf{U}}
\newcommand{\vm}{\vc{v}_m}
\newcommand{\vv}{\mathbf{v} }
\newcommand{\vV}{  \mathbf{V}  }
\newcommand{\vW}{\mathbf{W}    }
\newcommand{\vw}{\mathbf{w} }

\newcommand{\ka}{\kappa}

\newcommand{\bd}{ \mathbb{D}  }
\newcommand{\ba}{ \mathbb{A}  }
\newcommand{\vc}[1]{{\boldsymbol #1}}
\newcommand{\Div}{{\rm div}}
\newcommand{\Grad}{\nabla}

\newcommand{\dx}{\,{\rm d} x}

\newcommand{\dt}{\,{\rm d} t }
\newcommand{\ds}{\,{\rm d} s}

\newcommand{\dxdt}{\dx\dt}

\newcommand{\ep}{\varepsilon}
\newcommand{\ed}{{\varepsilon,\delta}}

\font\F=msbm10 scaled 1000
\newcommand{\R}{\mbox{\F R}}

\newcommand{\lr}[1]{\left( #1 \right)}
\newcommand{\eq}[1]{\begin{equation}
\begin{split}
#1
\end{split}
\end{equation}}
\newcommand{\eqh}[1]{\begin{equation*}
\begin{split}
#1
\end{split}
\end{equation*}}

\usepackage{color,colortbl,xcolor}
\definecolor{darkgreen}{rgb}{0,0.5,0}


\newcommand\Cbox[2]{%
    \newbox\contentbox%
    \newbox\bkgdbox%
    \setbox\contentbox\hbox to \hsize{%
        \vtop{
            \kern\columnsep
            \hbox to \hsize{%
                \kern\columnsep%
                \advance\hsize by -2\columnsep%
                \setlength{\textwidth}{\hsize}%
                \vbox{
                    \parskip=\baselineskip
                    \parindent=0bp
                    #2
                }%
                \kern\columnsep%
            }%
            \kern\columnsep%
        }%
    }%
    \setbox\bkgdbox\vbox{
        \color{#1}
        \hrule width  \wd\contentbox %
               height \ht\contentbox %
               depth  \dp\contentbox
        \color{black}
    }%
    \wd\bkgdbox=0bp%
    \vbox{\hbox to \hsize{\box\bkgdbox\box\contentbox}}%
    \vskip\baselineskip%
}

\usepackage{comment}

\newcommand{\tsl}{\textsl}

\newcommand{\mbb}{\mathbb}

\newcommand{\mc}{\mathcal}

\newcommand{\veps}{\varepsilon}

\newcommand{\wtilde}{\widetilde}

\newcommand{\oline}{\overline}

\newcommand{\g}{\gamma}
\renewcommand{\k}{\kappa}

\renewcommand{\t}{\tau}

\renewcommand{\vm}{\mathbf{m}}

\newcommand{\N}{\mathbb{N}}

\newcommand{\T}{\mathbb{T}}

\renewcommand{\div}{{\rm div}\,}

\newcommand{\supp}{{\rm supp}\,}

\allowdisplaybreaks


\begin{document}


\pagestyle{fancy} \lhead{\color{blue}{Anelastic approximation for degenerate Navier--Stokes system}}  \rhead{\emph{N.Chaudhuri et al.}}

\title{\bf Anelastic approximation for the degenerate compressible Navier--Stokes equations revisited }


\author{Nilasis Chaudhuri$^\dag$
\and
Francesco Fanelli$ ^* $
\and
Yang Li$ ^\ddag $
\and
Ewelina Zatorska$^\S$
}

\date{\today}

\maketitle

{
\footnotesize
\centerline{$^\dag\;$Institute of Applied Mathematics and Mechanics, University of Warsaw,}
\centerline{ul. Banacha 2 -- 02-097 Warszawa, Poland}

\smallbreak
\centerline{\small \texttt{nchaudhuri@mimuw.edu.pl}}

\bigbreak
\centerline{$^{*,a}\;$BCAM -- Basque Center for Applied Mathematics}
\centerline{Alameda de Mazarredo, 14 -- E-48009 Bilbao, Basque Country, Spain}

\smallbreak
\centerline{$^{*,b}\;$Ikerbasque -- Basque Foundation for Science}
\centerline{Plaza Euskadi 5, E-48009 Bilbao, Basque Country, SPAIN -- E-48009 Bilbao, Basque Country, Spain}

\smallbreak
\centerline{$^{*,c}\;$Univ. Lyon, Universit\'e Claude Bernard Lyon 1, CNRS UMR 5208, Institut Camille Jordan,}
\centerline{43, Boulevard du 11 novembre 1918 -- F-69622 Villeurbanne, France}

\smallbreak
\centerline{\small \texttt{ffanelli@bcamath.org}}

\bigbreak
\centerline{$^\ddag\;$School of Mathematical Sciences, Anhui University}
\centerline{Hefei -- 230601, People's Republic of China}

\smallbreak
\centerline{\small \texttt{lynjum@163.com}}

\bigbreak
\centerline{$^\S\;$Mathematics Institute, University of Warwick}
\centerline{Zeeman Building, Coventry CV4 7AL -- United Kingdom}

\smallbreak
\centerline{\small \texttt{ewelina.zatorska@warwick.ac.uk}}

}

\begin{abstract}

{
In this paper, we revisit the joint low-Mach and low-Froude number limit for the compressible Navier–Stokes equations with degenerate, density-dependent viscosity. Employing the relative entropy framework based on the concept of $\kappa$-entropy, we rigorously justify the convergence of weak solutions toward the generalized anelastic system in a three-dimensional periodic domain for well-prepared initial data. For general ill-prepared initial data, we establish a similar convergence result in the whole space, relying essentially on dispersive estimates for acoustic waves.     
Compared with the work of Fanelli and Zatorska [\emph{Commun. Math. Phys.}, 400 (2023), pp. 1463-1506], our analysis is conducted for the standard isentropic
pressure law, thereby eliminating the need for the cold pressure term that played a crucial role in the previous approach. To the best of our knowledge, this is the first rigorous singular limit result for the compressible Navier–Stokes equations with degenerate viscosity that requires no additional regularization of the system.
}

\end{abstract}

\vspace{2mm}

{\bf Keywords: }{degenerate compressible Navier--Stokes equations; isentropic pressure law; anelastic approximation; $\k$-entropy weak solutions;
relative entropy inequality for $\k$-entropy solutions.}

\vspace{2mm}

{\bf Mathematics Subject Classification 2020:}{ 35Q30, 76N06, 35B25.  }

\tableofcontents

\section{Introduction}
In this work, we revisit the derivation of the generalized anelastic approximation from the compressible Navier–Stokes equations with degenerate, density-dependent viscosity, a problem first analyzed in \cite{FZ23}. We consider the regime where the Mach and Froude numbers share the same scaling with respect to a small parameter $\ep>0$, corresponding to the strong stratification regime.
The governing equations for the {\emph{primitive system}}, posed on a spatial domain $\Omega\subset\R^3$, take the following form: 

\begin{subnumcases}{\label{dege-ns}}
	\p_t  \vr+ \Div(  \vr \vu )=0,\label{dege-ns1}\\
	\p_t(\vr \vu) +\Div (\vr  \vu \otimes \vu ) + \f{1}{\ep^2}\Grad  p(\vr)- 2 \mu \Div ( \vr \bd \vu ) = \f{1}{\ep^2} \vr \Grad G,\label{dege-ns2}
\end{subnumcases} 
where the unknowns are the mass density $\vr=\vr(t,x)\geq 0$ and the velocity field $\vu=\vu(t,x)\in \R^3$. The barotropic pressure law is given by $p(\vr)=a\vr^{\gamma}$, with constants $a>0$ and $\gamma>1$; the  viscosity coefficient $\mu>0$ is constant; $\bd \vu=\frac12(\Grad \vu+\Grad^{t} \vu)$ denotes the symmetric part of the velocity gradient.
$G=G(x)$ represents a potential external force like gravity, for example. For later reference, we also denote by $\ba \vu=\frac12(\Grad \vu-\Grad^{t} \vu)$ the antisymmetric part of the velocity gradient.

We focus on the bulk dynamics and neglect boundary effects. Thus, the spatial domain considered here is
\begin{align*}
 \mbox{either the periodic box $\;\Om=\mathbb{T}^3$, \qquad 
 or the whole space $\;\Om=\mathbb{R}^3$.}
\end{align*}
In the latter case, the system is supplemented with the following far-field conditions:
\begin{align}\label{eq:far-field}
 \vr \rightarrow \overline{\vr}>0,
 \qquad 
 \vu \rightarrow \mathbf{0} \qquad
 \text{ as }\; |x|\rightarrow \infty\,,
\end{align}
where $\overline{\vr}>0$ is a constant reference density.

\subsection{The generalized anelastic system}
Assume that, as $\ep\to0$, we have the following limits:
\begin{align*}
   \vr=\vr_{\ep}\rightarrow b
   \qquad \mbox{ and }\qquad
   \vu=\vu_{\ep}
   \rightarrow 
   \vU.
\end{align*}
We seek for the equations governing the dynamics of the limiting profiles $b=b(t,x)$ and $\vU = \vU(t,x)$. The following derivation is formal; its rigorous justification
precisely constitutes the core of this paper.

A direct balance of the leading-order terms in \eqref{dege-ns2} yields the following constraint for $b$:
\begin{equation}\label{const-b-1_bis}
\Grad p(b)=b\Grad G.
\end{equation}
Equivalently,
\begin{equation} \label{eq:stat-b}
 a\,\frac{\g}{\g-1}\Grad b^{\g-1} = \Grad G,
\end{equation}
which implies that $\Grad b$ is time-independent. Hence $b(t,x) = c(t) + \wtilde b(x)$. In the whole-space setting $\Omega = \R^3$, the far-field condition $b \to \oline{\vr}$ as $|x|\to \infty$ forces $c(t)\equiv c_0$ to be constant in time. The same conclusion follows in the periodic case $\Omega= \T^3$: by taking the limit $\ep \rightarrow 0$ in \eqref{dege-ns1}
we find
\begin{align*}
 \p_tb + \Div (b \vU) = 0,
\end{align*}
and, by taking the integral over $\Omega$, we find again that $c(t)\equiv c_0$.

For smooth enough $G$, the monotonicity of $p(\vr)$ ensures the existence of a smooth, positive solution $b$ to \eqref{const-b-1_bis} satisfying the uniform bounds
\begin{equation}\label{const-b-2}
0\,<\,\underline{b}\,\leq\, b(x)\, \leq\, \overline{b}\,<\,\infty\qquad \text{  for all  }\quad x \in \Om.
\end{equation}
We refer to \cite{FZ23} for the details.

Passing formally to the limit $\ep \rightarrow 0$ in  \eqref{dege-ns} then allows to derive a generalized anelastic system
with variable viscosity: 
\begin{subnumcases}{\label{ane-app}}
	 \Div(  b \vU )=0,\label{ane-app1}\\
	\p_t \vU + \vU \cdot \Grad \vU +\Grad \Pi -b^{-1}2\mu \Div (b \bd \vU)=\mathbf{0}. \label{ane-app2}
\end{subnumcases} 
We call it the \emph{generalized anelastic approximation}. In contrast, the \emph{classical} anelastic model corresponds to a constant-viscosity stress tensor  $ b^{-1} \mu \Delta \vU$ (notice that the factor $b^{-1}$ appears from dividing
the momentum equation by $b$ and should not be confused with a variable viscosity coefficient).

\subsection{Derivation of the anelastic approximation: previous results}

We briefly review the relevant literature on singular limits leading to the anelastic approximation, leaving aside the extensive body of work on general low-Mach and low-Froude number limits.

The model \eqref{ane-app} was first formally derived by Ogura and Phillips \cite{OP} for inviscid ideal gases without heat conduction.
When $\mu=0$, the limit system coincides with the lake equation, well-known in the analysis of shallow-water flows; see \cite{LOT96,Oli97}. Accordingly, \eqref{ane-app} may be viewed as a viscous extension of the lake equation with variable viscosity.

The rigorous justification of the anelastic limit for the compressible Navier–Stokes equations is a classical and challenging problem. For the barotropic Navier–Stokes system with constant viscosity, Masmoudi
\cite{Mas07} proved the limit in a bounded domain for finite-energy weak solutions and ill-prepared data, using an approach based on compensated compactness arguments
previously developed in \cite{LioMas98}. Feireisl et al. \cite{FMNS08} subsequently obtained a similar result in the periodic case for small perturbations of the ideal gas pressure law.
 In their monograph \cite{FN09}, Feireisl and Novotn\'{y}  extended this analysis to entropy weak solutions of the Navier–Stokes–Fourier system, justifying both strongly and weakly stratified regimes. More recently, 
Feireisl et al. \cite{FKNZ16} studied the Navier–Stokes–Fourier system without thermal diffusion and identified the limiting system as the anelastic approximation coupled with a temperature transport equation, as originally proposed in \cite{OP}. Working in an infinite slab $\mathbb{R}^2\times (0,1)$
with slip boundary conditions, they employed the RAGE theorem from scattering theory to establish local decay of acoustic modes, yielding strong convergence of the velocity field.
An analogous result in the whole space $\R^3$ was later obtained by Donatelli and Feireisl \cite{DF-17} using frequency-localized Strichartz estimates.

In contrast, singular-limit results for \emph{degenerate} compressible Navier–Stokes equations are considerably more scarce. The principal difficulty lies in the loss of coercivity near vacuum: as $\rho\approx0$, equation \eqref{dege-ns} provides no direct control on $\vu$ or its gradient.

Three main strategies have been explored in the literature to overcome this degeneracy:

\begin{enumerate}[(1)]
\item{
{\bf{Artificial drag terms}}. 
Bresch et al. \cite{BGL05} considered the low-Mach and low-Froude numbers limit of the degenerate system augmented by artificial friction terms $r_0 \vu+r_1\vr |\vu| \vu$.
For the shallow-water pressure law $p(\vr)=\vr^2/2$ and well-prepared initial data, they justified convergence to the anelastic system via a relative-energy method.
}
\item{
{\bf{Capillarity term}}. Taking the Coriolis force $\vr \vu^{\perp}$ and a capillarity term $\kappa \vr \nabla \Delta \vr$ into account, Bresch and Desjardins \cite{BD03} (see
also J\"{u}ngel et al. \cite{JLW14} for an analogous result for a slightly different choice of the capillarity term)
considered the low Mach number and low Rossby number limits for the degenerate compressible  Navier--Stokes-Korteweg system
(or Quantum Navier--Stokes as in \cite{JLW14}).
For well-prepared data, and two-dimensional periodic domain, they obtained convergence to the viscous quasi-geostrophic equation using techniques analogous to \cite{BGL05}. The analysis was extended to three dimensions and ill-prepared data by Fanelli \cite{F-2016} who employed symmetrization and generalized RAGE estimates; see also \cite{F-2016-JMFM} for a related study based on a compensated-compactness approach.

Further developments by Caggio et al. \cite{CDH-25} include the inviscid incompressible limit in $\R^3$ for degenerate capillary systems, and the high-Mach number regime leading to pressureless capillary flows \cite{CD24}. 

In all these works, the target density profile was constant, unlike in the present study where $b$ varies in space, cf. \eqref{const-b-1_bis}. Moreover, the relative entropy
used in these results does not include the BD entropy part, which is the most challenging one to estimate.
}
\item{
{\bf{Cold pressure component}}. In the three-dimensional torus $\T^3$, Fanelli and Zatorska \cite{FZ23} investigated the strongly stratified, low-Mach number limit for the degenerate compressible Navier--Stokes system with an additional \emph{cold pressure} term,
\begin{align*}
    p(\vr)= 
    \vr^{\gamma}-\vr^{-\kappa},\qquad 
    \gamma>1,\qquad \kappa \geq \gamma-2,\ \kappa>3. 
\end{align*}
which restores parabolicity of \eqref{dege-ns2} near vacuum. 
This modification enabled them to generalize \cite{BGL05} to more general pressure laws and to the case of ill-prepared initial data, while removing the artificial drag terms.  
}
\end{enumerate}

\subsection{Main contributions of this work} \label{ss:intro_overview}

Building on the theory of finite-energy weak solutions for degenerate compressible Navier–Stokes equations \cite{VaYu16, li-xin, {BVY21}}, 
our goal is to rigorously justify the generalized anelastic approximation \eqref{ane-app} for standard barotropic pressure laws, without introducing any additional regularization effect in the system -- no drag, capillarity, or cold-pressure terms.

More precisely, we establish the joint low-Mach/low-Froude number limit for the degenerate compressible Navier–Stokes system \eqref{dege-ns} in the class of global $\kappa$-entropy weak solutions introduced in \cite{BDZ15}, both for well-prepared and ill-prepared initial data. The novelty of this work lies in proving convergence without any auxiliary regularizing strategies.

Our results provide ``weak-to-strong" convergence:
weak $\kappa$-entropy solutions of the primitive system converge to strong solutions of the target anelastic equations
 \eqref{ane-app}, which are locally well-posed in three dimensions.
The proof relies on a relative-entropy framework tailored to degenerate viscosity, implemented via the two-velocity formulation and the associated relative $\kappa$-entropy functional developed in \cite{BDZ15, BNV15, BNV17}. For ill-prepared data, the argument is further combined with a careful analysis of acoustic wave dispersion, inspired by \cite{FJN14}, \cite{F-N_AMPA, F-N_CPDE}  and refined by the Strichartz-type estimates used in \cite{DF-17}.  
The treatment of ill-prepared data introduces essential differences from the well-prepared case. First, the analysis is carried out in the whole space $\Omega = \R^3$ to exploit dispersive properties of acoustic modes,
whereas well-prepared data are treated on the periodic box  $\Omega = \T^3$. Second, the choice of test functions in the relative-entropy inequality differs. For well-prepared data, the comparison is made directly with the strong anelastic solution, emanating from the limiting initial data, for which the initial relative entropy converges to zero. For ill-prepared data, this is no longer the case, and an additional oscillatory correction—representing the acoustic component—must be included. Since the limiting anelastic system is independent of acoustic waves, one expects that the initial perturbation will disperse over the time, a fact rigorously justified through Strichartz-type estimates developed previously in \cite{DF-17}.

\subsection*{Structure of the paper}

The rest of this paper is arranged as follows. Section \ref{s:results} introduces the precise formulation of the main results.
Subsection \ref{ss:weak} recalls the definition of weak $\kappa$-entropy solutions to the primitive system \eqref{dege-ns} and its two-velocity formulation. More explanations on the theory of that class of solutions are postponed to Appendix.
Subsection \ref{ss:strong-target} is devoted to the local well-posedness of the target anelastic system \eqref{ane-app}.
Subsection \ref{ss:results} states the two main theorems:
Theorem \ref{TH1} for well-prepared data and Theorem \ref{TH2} for ill-prepared data.

Section \ref{sec:REI} derives the relative-entropy inequality for weak $\kappa$-entropy solutions and smooth test functions. The basic form is first derived  in Subsection  \ref{Sect:3.1}, and Subsection \ref{ss:improved} gives an improved version, obtained when assuming further conditions on the smooth
test functions.
Section \ref{Sec:unif} derives uniform in $\ep$ estimates for sequence of weak $\kappa$-entropy solutions following from the relative $\kappa$-entropy estimates.

Section \ref{sec:well} establishes the singular-limit result for well-prepared data, 
while Section \ref{sec:ill} deals with the more delicate case of ill-prepared data, combining relative-entropy estimates with dispersive analysis of acoustic waves.

The paper ends with Section \ref{sec:6}, containing some final remarks, and the above mentioned Appendix.

\section{Formulation of the main results} \label{s:results}

In this section we present the rigorous formulation of our convergence results from the degenerate compressible Navier–Stokes system to the generalized anelastic approximation.

Before stating the main theorems, we recall the notion of $\kappa$-entropy weak solutions for the primitive system, review the two-velocity formulation, and discuss the local well-posedness of the target anelastic system.

\subsection{Weak solutions to the primitive system} \label{ss:weak}

The global existence theory for degenerate compressible Navier–Stokes equations without any additional regularizing mechanism—such as artificial drag, capillarity, or cold pressure—was first established by Vasseur and Yu \cite{VaYu16} in the three-dimensional torus $\T^3$. Their definition of weak solution involves the standard distributional formulation of the mass and momentum equations together with the basic energy inequality (corresponding to
$\kappa=0$ in  \eqref{energy-inequa} below). Later, Bresch, Vasseur and Yu  \cite{BVY21} extended the result to nonlinear viscosity coefficients satisfying the Bresch–Desjardins relation, proving the existence of velocity-renormalized weak solutions. These solutions are automatically weak solutions in the usual distributional sense; they satisfy the $\kappa$-entropy inequality introduced in \cite{BDZ15} while bypassing the need for Mellet–Vasseur higher-integrability estimates on  $\vr|\vu|^2$.

Our definition of weak solutions for the primitive system \eqref{dege-ns} follows this $\k$-entropy framework.
This class of solutions admits a generalized relative-entropy inequality, which is the cornerstone of our asymptotic analysis.

For clarity, we first recall the standard definition on a torus $\Omega=\T^3$ (see Definition \ref{def-1} below) and a few technical points that are well-understood in the literature \cite{VaYu16,BVY21} but essential for the reader’s orientation. These will be formulated in the series of remarks that follow.

\begin{Definition}[\bf $\kappa$-entropy solution to one velocity system]\label{def-1}
Let $\Omega=\T^3$, $T>0$, $\kappa\in (0,1)$. A pair $(\vr,\vu)$ is said to be a \emph{weak $\kappa$-entropy solution} to \eqref{dege-ns}
with initial data $(\vr_0,\vu_0)$ if  the following conditions are satisfied:
\begin{itemize}
\item{ Regularity of solution:
\begin{align}
&  \vr \geq 0,\quad 
\overline{\sqrt{\vr} \Grad \vu} \in L^2\big(0,T;L^2(\Om;\R^{3\times 3})\big),\quad 
\vr \in L^{\infty}\big(0,T;L^{\gamma}(\Om)\big) ,      \nonumber  \\
&  \sqrt{\vr} \Grad\log \vr,    
\sqrt{\vr}\vu\in  L^{\infty}\big(0,T;L^2(\Om;\R^3)\big) ,
\quad 
   \Grad \vr^{\f{\gamma}{2}} \in L^2\big(0,T;L^2(\Om;\R^3)\big);          \label{regu}
\end{align}
}
\item { The continuity equation
\begin{align}
    &  \int_0^{T} \int_{\Om} (\vr \p_t \phi+ \vr \vu \cdot \Grad \phi) \dxdt= - \int_{\Om} \vr_0 \phi(0) \dx     \label{equ-conti}
\end{align}
      holds for any $\phi\in C_c^{\infty}( [0,T) \times \Om)$;
}
\item{ The momentum equation
\begin{align}
    &  \int_0^{T} \int_{\Om} \left(   \vr \vu \cdot \p_t \vc{\phi} +\vr \vu \otimes \vu :\Grad \vc{\phi}+ \f{1}{\ep^2} p(\vr) \Div \, \vc{\phi} -2\mu \sqrt{\vr} \, \overline{ \sqrt{\vr} \, \bd \vu}:\bd\vc{\phi}                     \right) \dxdt    \nonumber \\
    & \quad  =  -   \f{1}{\ep^2}  \int_0^{T} \int_{\Om}  \vr \Grad G \cdot \vc{\phi} \dxdt    -   \int_{\Om} \vr_0 \vu_0 \cdot \vc{\phi}(0) \dx      \label{balan-momentu}
\end{align}
      holds for any $\vc{\phi} \in C_c^{\infty}( [0,T) \times \Om;\R^3)$;
}
\item{$\kappa$-entropy inequality: define
\begin{align}\label{def:vw}
\vv:=\vu+2\kappa \mu \Grad \log \vr\quad  \text{and} \quad \vw:=2 \sqrt{\kappa (1-\kappa)} \, \mu \Grad \log \vr,
\end{align}
then for almost all $t\in (0,T)$,
\begin{align}
    &    \sup_{\tau \in [0,t]} \left[ \int_{\Om}\vr \left( \f{|\vv|^2}{2}+\f{|\vw|^2}{2}  \right) (\tau) \dx        + \f{1}{\ep^2} \int_{\Om}
          \f{a}{\gamma-1}\vr^{\gamma} (\tau) \dx  \right]    \nonumber     \\
    &   \quad   \quad        +  2\mu \int_0^t \int_{\Om}   \Big(   \left|\Ov{\sqrt{\vr\kappa} \, \ba \vv} \right|^2 + \left|\Ov{\sqrt{\vr}\lr{\sqrt{1-\kappa} \,\bd \vv -\sqrt{\kappa} \, \Grad\vw } } \right|^2  \Big)\dx\ds                  \nonumber     \\
    &    \quad   \quad        +  2\f{\kappa \mu}{\ep^2} \int_0^t \int_{\Om} \f{p'(\vr)}{\vr} |\Grad \vr|^2 \dx\ds                    \nonumber     \\
    &   \quad     \leq   \int_{\Om}\vr_0 \left( \f{|\vv_0|^2}{2}+\f{|\vw_0|^2}{2}  \right)  \dx        +  \f{1}{\ep^2}  \int_{\Om}
          \f{a}{\gamma-1}\vr_0^{\gamma} \dx   + \f{1}{\ep^2}  \int_0^t \int_{\Om} \vr \Grad G \cdot \vv  \dx\ds,             \label{energy-inequa}
\end{align}
where $\vv_0$ and $\vw_0$ are defined by formulas \eqref{def:vw} applied to the initial datum $(\vr_0,\vu_0)$.
}
\end{itemize}
\end{Definition}

\begin{Remark}
   The regularity class \eqref{regu} implies that
    \begin{align*}
    \sqrt{\vr} \vw, \sqrt{\vr}\vv\in  L^{\infty}\big(0,T;L^2(\Om;\R^3)\big).
    \end{align*}
    Hence, every term appearing in \eqref{energy-inequa} is well-defined.
\end{Remark}
\begin{Remark} \label{r:def-weak}
Because of the degeneracy of equation \eqref{dege-ns2} near vacuum, no information on $\vu$ or $\bd\vu$ is available when $\vr\approx0$. In particular,
even though we denote the solution by $(\vr,\vu)$, the quantity $\vu$ alone is not well-defined. The meaningful quantity that we work with is
$\sqrt{\vr} \,\vu$, 
which is indeed the quantity appearing in all the relations defining the weak solution, see \eqref{regu}--\eqref{balan-momentu}.
Likewise, we interpret \eqref{def:vw} defining $\vv$ and $\vw$ in terms of $\vu$ and $\Grad\log\vr$: they implicitly contain a factor $\sqrt{\vr}$ multiplying both
their left and right sides.
\end{Remark}

\begin{Remark}\label{Rem:2.1}
Any quantity involving gradients of $\vu$, e.g. $\sqrt{\vr} \, \Grad\vu$ 
can only be identified at the level of construction of approximate solutions. In the limit, these terms must be understood in the distributional  sense. To mark it,
we use the bar symbol, e.g. $\oline{\sqrt{\vr} \, \Grad\vu}$ instead of $\sqrt{\vr} \, \Grad\vu$, and denote
     \begin{align*}
    \sqrt{\vr} \, \overline{\sqrt{\vr} \, \Grad \vu}=\Grad\lr{\vr\vu}-2\sqrt{\vr}\vu\otimes\Grad\sqrt{\vr}\quad in \quad {\cal D}'\big((0,T)\times \Omega\big).
    \end{align*}
Similarly, we have
    \eq{
    \label{weak-id1}
    \int_0^{T} \!\!\!\int_{\Om} \sqrt{\vr}\, \Ov{\sqrt{\vr} \, \bd\vu}:\bd\vc{\phi} \dx \ds &=\langle\sqrt{\vr} \, \oline{\sqrt{\vr} \, \bd\vu},\bd\vc{\phi}\rangle\\
    &=-\int_0^{T} \!\!\!\int_{\Om} \frac{1}{2} \vr \vu \cdot \lr{\Delta \vc{\phi}+ \nabla \Div \vc{\phi}} \dx \ds -2 \int_0^{T} \!\!\!\int_{\Om}\lr{\nabla \sqrt{\vr } \otimes \sqrt{\vr}\vu} : \bd \vc{\phi} \dx\ds,
    }
    \eq{\label{weak-id2}
    \int_0^{T} \!\!\!\int_{\Om} \sqrt{\vr} \, \Ov{\sqrt{\vr} \, \ba\vu}:\ba\vc{\phi} \dx \ds &=\langle\sqrt{\vr} \, \oline{\sqrt{\vr} \, \ba\vu},\ba\vc{\phi}\rangle\\
    &=-\int_0^{T} \!\!\!\int_{\Om} \frac{1}{2} \vr \vu \cdot \lr{\Delta \vc{\phi}- \nabla \Div \vc{\phi}} \dx \ds -2 \int_0^{T} \!\!\!\int_{\Om}\lr{\nabla \sqrt{\vr } \otimes \sqrt{\vr}\vu} : \ba \vc{\phi} \dx\ds,
    }
    for any $\vc{\phi} \in C_c^{\infty}( (0,T) \times \Om;\R^3)$.
    \end{Remark}

More detailed discussion of the validity of the $\k$-entropy inequality \eqref{energy-inequa} is moved to  Appendix.

\medbreak
As pointed out by Remark 1.7 of \cite{BVY21}, regularity \eqref{regu} allows to consider an equation for $\Grad\log\vr$ that is satisfied in the sense of distributions. Subtracting this equation from the one for $\vu$ yields the \emph{two-velocity formulation} for the velocity fields $\vv, \vw$ introduced in \eqref{def:vw}:
\begin{subnumcases}{\label{dege-ns-aug}}
     \p_t  \vr+ \Div\lr{  \vr \lr{\vv- \sqrt{\frac{\kappa}{1-\kappa} }\vw} }=0,\label{dege-ns-aug1}\\
 \p_t(\vr \vv) +\Div \lr{\vr  \vv\otimes \lr{\vv- \sqrt{\frac{\kappa}{1-\kappa} } \vw}  }  + \f{1}{\ep^2}\Grad  p(\vr) =
\mu \Div (2\vr (1-\kappa)\bd \vv) + \mu \Div (2\kappa \vr \ba \vv)  \nonumber\\
     \quad  \quad   \quad  \quad  \quad  \quad   \quad  \quad        -\mu \Div \Big(   2\sqrt{\kappa (1-\kappa)} \vr \Grad \vw        \Big)+\f{1}{\ep^2} \vr \Grad G,\label{dege-ns-aug2}\\
\p_t (\vr\vw)+ \Div \lr{\vr  \vw\otimes \lr{\vv- \sqrt{ \frac{\kappa}{1-\kappa} } \vw}  } =\mu \Div (2\kappa \vr \Grad\vw)
-\mu \Div \Big( 2\sqrt{\kappa (1-\kappa)}   \vr (\bd-\ba)\vv\Big).\label{dege-ns-aug3}
\end{subnumcases}
Equations \eqref{dege-ns-aug} are rigorously obtained in \cite{BVY21} as the limit of smooth approximation $\big\{(\vr_n,\vu_n)\big\}_{n\in\N}$,
using the strong convergence of $\sqrt{\vr_n}$ and of $\sqrt{\vr_n}\varphi(\vu_n)$ for some renormalization function $\varphi$. 

In the formulas above, we have adopted the convention that
\begin{align*}
 \Div(a\otimes b) = \sum_{j=1}^3 \p_j(a b_j) = (b\cdot\Grad)a + \Div(b) a\,.
\end{align*}
We also notice that the writing $\oline{\sqrt{\vr}\,\ba\vv}$, which is simply a notation for an element (say) $\mbb V\in L^2_T(L^2)$ (keep \eqref{energy-inequa}
in mind), verifies $\sqrt{\vr}\,\mbb V = \oline{\vr\,\ba\vv}$. As a matter of fact, this equality can be obtained by passing to the limit in sequences of smooth approximate solutions.
Thus, this notation is consistent with the symbol $\vr\ba\vv$ appearing in system \eqref{dege-ns-aug},
and the same remains true also for the other quantities involving gradients.

System \eqref{dege-ns-aug} is the departure point for our definition of the two-velocity $\kappa$-entropy weak solution. Notice that discussions similar to the ones of
Remarks \ref{r:def-weak} and \ref{Rem:2.1}, concerning bared quantities instead of classical non-bared versions, apply also to the following definition.

\begin{Definition}[\bf $\kappa$-entropy weak solution of the two-velocity system]\label{def-1a}
Let $\Omega=\T^3$,  $0<T<\infty$, $\kappa\in(0,1)$. A triple $(\vr,\vv,\vw)$ is said to be a weak $\kappa$-entropy solution \eqref{dege-ns-aug} with initial data $(\vr_0,\vv_0,\vw_0)$ if the following conditions are satisfied:
    \begin{itemize}
    \item With $\vu := \vv - \sqrt{\frac{\k}{1-\k}}\, \vw$, the pair $(\vr,\vu)$ satisfies \eqref{regu};
\item The following properties hold true:
\eq{\label{def2:prop}
\oline{\sqrt{\vr} \, \ba\vw}=\mathbf{0}\quad and \quad \oline{\sqrt{\vr} \, \bd\vw} = \oline{\sqrt{\vr} \, \Grad \vw};}
    \item Equation \eqref{dege-ns-aug1} is satisfied in the weak sense: for any $\phi\in C_c^{\infty}( [0,T) \times \Om)$,
\begin{align}
    &  \int_0^{T} \int_{\Om} \lr{\vr \p_t \phi+ \vr \lr{\vv-\sqrt{ \frac{\kappa}{1-\kappa} } \vw}\cdot \Grad \phi} \dxdt= - \int_{\Om} \vr_0 \phi(0) \dx;     \label{equ-conti-aug}
\end{align}
\item{Equations \eqref{dege-ns-aug2} and \eqref{dege-ns-aug3} are satisfied in the sense of distributions:
for any $\vc{\phi} \in C_c^{\infty}( [0,T) \times \Om;\R^3)$, 
\begin{align}
    &  \int_0^{T} \int_{\Om} \left(   \vr \vv \cdot \p_t \vc{\phi} +\vr \vv \otimes \lr{\vv- \sqrt{ \frac{\kappa}{1-\kappa} } \vw} :\Grad \vc{\phi}+ \f{1}{\ep^2} p(\vr) \Div \vc{\phi}         \right) \dxdt    \nonumber \\
    &\quad \quad 
    -2\mu\int_0^{T} \int_{\Om}\lr{\sqrt{1-\kappa}\sqrt\vr  \, \overline{ \sqrt{1-\kappa} \sqrt{\vr} \, \bd \vv-\sqrt{\kappa}\sqrt{\vr}  \, \Grad\vw}:\bd \vc{\phi} + \kappa \sqrt{\vr} \, \Ov{\sqrt\vr  \,\ba\vv}:\ba\vc{\phi}     } \dxdt \nonumber\\
    & \quad 
    =  -   \f{1}{\ep^2}  \int_0^{T} \int_{\Om}  \vr \Grad G \cdot \vc{\phi} \dxdt    -   \int_{\Om} \vr_0 \vu_0 \cdot \vc{\phi}(0) \dx    \label{balan-momentu-aug}
\end{align}
and 
\begin{align}
&  \int_0^{T} \int_{\Om} \left(   \vr \vw \cdot \p_t \vc{\phi} +\vr \vw \otimes \lr{\vv- \sqrt{\frac{\kappa}{1-\kappa} }\vw} :\Grad \vc{\phi}     \right) \dxdt    \nonumber \\
& \quad \quad 
+2\mu 
\int_0^{T} \int_{\Om} 
\left( \sqrt{\kappa}\sqrt\vr  \, \overline{ \sqrt{1-\kappa} \sqrt{\vr} \,\bd \vv-\sqrt{\kappa}\sqrt{\vr} \, \Grad\vw}:\bd \vc{\phi}- \sqrt{ \kappa (1-\kappa) }\sqrt{\vr}
\,\, \Ov{ \sqrt{\vr}  \, \ba\vv }: \ba \vc{\phi}
\right) \dxdt \nonumber    \nonumber \\
& \quad 
=  -   \int_{\Om} \vr_0 \vw_0 \cdot \vc{\phi}(0) \dx  ;    \label{balan-momentu-aug-2}
\end{align}
}
\item{
        The $\kappa$-entropy inequality \eqref{energy-inequa} holds for almost all $t \in (0,T)$.
}
    \end{itemize}
\end{Definition}

\begin{Remark}
    The distributional formulations 
    \eqref{balan-momentu-aug}--\eqref{balan-momentu-aug-2} mirror \eqref{balan-momentu},
with terms involving $\bd\vv$, $\ba\vv$ and $\Grad\vw$ understood in the distributional sense, in full analogy with \eqref{weak-id1} and \eqref{weak-id2} above. 
\end{Remark}

\begin{Remark} \label{r:w}
 Properties $\oline{\sqrt{\vr} \, \ba\vw } = \mathbf{0}$ and $ \oline{\sqrt{\vr} \, \bd \vw} = \oline{ \sqrt{\vr} \, \Grad \vw}$,
appearing in \eqref{def2:prop},
are enough to give meaning to all the terms appearing in Definition \ref{def-1a}.
They are also the only properties needed in Section \ref{sec:REI} to rigorously derive the relative $\k$-entropy
inequality.

We prove in the Appendix that one can guarantee those properties in the process of construction of $\k$-entropy weak solutions,
in the sense of Definition \ref{def-1a}.
Furthermore, also in the Appendix, we show  that
\[
 \sqrt{\vr} \vw = 2 \mu \sqrt{\k (1-\k)} \sqrt{\vr} \Grad\log\vr\,,
\]
which plays a crucial role in our computations.
\end{Remark}

The $\kappa$-entropy two-velocity formulation originates from \cite{BDZ15} and was later employed in \cite{BNV15, BNV17} to define the relative $\kappa$-entropy functional: 
\begin{align}
 \mathscr{E}( \vr,\vv,\vw| r,\vV,\vW):= & \f{1}{2} \int_{\Om}  \vr \Big(|\vv-\vV|^2+|\vw-\vW|^2  \Big)     \dx  \nonumber \\
& \quad +\f{1}{\ep^2} \int_{\Om}   \Big( P(\vr)-P(r)-P'(r)(\vr-r)  \Big)                        \dx,  \label{eq:def-E}
\end{align}
where the so-called pressure potential $P$ reads as 
\begin{align}
 P(\vr)\, =\, \f{a}{\g-1}\,\vr^\g\,.\label{Ppot}
\end{align}

The functional \eqref{eq:def-E} measures the ``distance" between a $\kappa$-entropy solution and any smooth test state $ (r,\vV,\vW)$ and serves as the core tool for the singular-limit analysis.
It can be seen as a two-velocity analogue of the very well-studied relative entropy for the compressible Navier--Stokes(-Fourier) equations with constant (or temperature-dependent) viscosity coefficients, see \cite{FJA12,FJN14,FNS11} for related results.  
 In particular, in \cite{BGL19} the authors generalized this relative entropy to the compressible Navier--Stokes-Korteweg system and used it to justify the limit when the viscosity coefficient tends to zero, leading to the quantum Euler equations. In the context of capillary fluids \cite{CD24}, a variant of \eqref{eq:def-E} incorporating capillary effects has been used to study the weak-strong uniqueness property.

\medbreak
\begin{Remark} \label{r:space}
Definitions \ref{def-1} and \ref{def-1a}, as well as the following remarks, are tailored to the problem on bounded periodic space-domains.
However, performing an expansion argument one can obtain solutions on $\Omega=\R^3$, see \cite{li-xin},\cite{CCH22}. In both of these results the far-field condition for the density at spatial infinity is zero. For non-trivial far-field conditions \eqref{eq:far-field}, the definitions above adapt by:
\begin{itemize}
 \item replacing global $L^p$ bounds in \eqref{regu} with their local-in-space counterparts;
 \item replacing condition $\vr \in L^{\infty}(0,T;L^{\gamma}(\Om))$ in \eqref{regu} with
$P(\vr)-P(\overline{\vr})-P'(\overline{\vr})(\vr-\overline{\vr}) \in L^{\infty}(0,T;L^{1}(\Om))$;
\item replacing the terms $\f{a}{\gamma-1}\vr^{\gamma}$, $\f{a}{\gamma-1}\vr_0^{\gamma}$ on both sides of \eqref{energy-inequa} by  $P(\vr)-P(\overline{\vr})-P'(\overline{\vr})(\vr-\overline{\vr})$ and $P(\vr_0)-P(\overline{\vr})-P'(\overline{\vr})(\vr_0-\overline{\vr})$, respectively.
\end{itemize}
\end{Remark}

\subsection{Strong solutions to the target system} \label{ss:strong-target}
We next recall the local-in-time well-posedness theory for the limiting system \eqref{ane-app}, whose regular solutions serve as test functions in the relative-entropy method.
\begin{Theorem}\label{thm:LWP}
Let $\Omega=\T^3$ or $\R^3$ and $m\geq 3$ an integer.
Let $b\in C^m(\Omega)$ satisfy \eqref{const-b-2} and $\mathbf{U}_0\in H^{m}(\Omega;\mathbb{R}^3)$ with $\Div (b \mathbf{U}_0)=0$.

Then, there exists $T_{\ast}>0$ such that \eqref{ane-app} admits a unique local strong solution $(\mathbf{U},\nabla\Pi)$ on $[0,T_{\ast})\times\Omega$
satisfying
\begin{align*}
& \sup_{0\leq t\leq T}
\Big( 
\| \mathbf{U} (t)  \|_{ H^{m}(\Omega)  }
+
\| \nabla \Pi (t)  \|_{ H^{m-2}(\Omega)  }
\Big) + \| \Grad\mathbf{U} \|_{L^2(0,T;H^m(\Omega))}  \leq C\,,
\end{align*}   
for any $0<T<T_{\ast}$, and for a positive constant $C = C(T,b,\mathbf{U}_0)$ depending only on the quantities inside the brackets.
\end{Theorem}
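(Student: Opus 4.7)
The statement is a classical local well-posedness result, and I would prove it by the energy method adapted to the weighted incompressibility constraint $\Div(b\mathbf{U})=0$ and the variable-coefficient viscous operator $\mathcal{L}_b\mathbf{U}:=b^{-1}\mu\Div(b\,\bd\mathbf{U})$. The preparatory step is to introduce the $b$-weighted Leray projector $\mathbb{P}_b$ onto the subspace of $b$-solenoidal vector fields, defined through the Helmholtz-type decomposition $\mathbf{F}=\mathbb{P}_b\mathbf{F}+\Grad q$ with $q$ solving the divergence-form elliptic problem $\Div(b\Grad q)=\Div(b\mathbf{F})$. Under \eqref{const-b-2} and $b\in C^m$, Lax--Milgram and standard elliptic bootstrap yield a unique $q$ (up to a constant on $\T^3$, and in homogeneous Sobolev spaces on $\R^3$), with $\mathbb{P}_b:H^s\to H^s$ bounded for $0\le s\le m$. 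Applying $\mathbb{P}_b$ to \eqref{ane-app2} kills the pressure and reduces the problem to an evolution for $\mathbf{U}$ alone, with $\Grad\Pi$ recovered a posteriori from $(I-\mathbb{P}_b)\bigl(\mathcal{L}_b\mathbf{U}-\mathbf{U}\cdot\Grad\mathbf{U}\bigr)$.

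Next, I would construct approximate solutions via a Friedrichs mollification scheme, or equivalently via a Galerkin scheme based on the eigenfunctions of the weighted Stokes operator $-\mathbb{P}_b\mathcal{L}_b$, which is self-adjoint and coercive with respect to the weighted inner product $(f,g)_b:=\int_\Omega b\,f\cdot g\,\dx$ thanks to Korn's inequality combined with \eqref{const-b-2}. The core of the argument is the derivation of uniform $H^m$ estimates: apply $\partial^\alpha$ for $|\alpha|\le m$ to the equation and test against $b\,\partial^\alpha\mathbf{U}$. The convective contribution vanishes to leading order by the constraint; the viscous term produces a dissipation $\mu\int_\Omega b\,|\bd\partial^\alpha\mathbf{U}|^2\dx$ controlling $\|\Grad\mathbf{U}\|_{H^m}^2$ up to lower-order terms; the pressure term vanishes by the projection. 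All commutators, namely $[\partial^\alpha,\mathbf{U}\cdot\Grad]\mathbf{U}$, $[\partial^\alpha,b^{-1}\Div(b\,\bd\cdot)]\mathbf{U}$, and those generated by the $b$-weighted test function, are estimated by Moser-type product inequalities in terms of $\|b\|_{C^m}$, $\|\mathbf{U}\|_{H^m}$, and $\|\Grad\mathbf{U}\|_{L^\infty}$, with the latter being controlled by $\|\mathbf{U}\|_{H^m}$ via Sobolev embedding (here $m\ge 3$ is crucial). Summing over $\alpha$ produces a closed differential inequality of the form
\[
\frac{\rm d}{{\rm d}t}\|\mathbf{U}\|_{H^m}^2+c\|\Grad\mathbf{U}\|_{H^m}^2\le C\bigl(\|b\|_{C^m}\bigr)\bigl(1+\|\mathbf{U}\|_{H^m}^2\bigr)\|\mathbf{U}\|_{H^m},
\]
which yields a lifespan $T_\ast>0$ depending only on $\|\mathbf{U}_0\|_{H^m}$ and $\|b\|_{C^m}$. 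Passing to the limit by Aubin--Lions compactness produces a strong solution on $[0,T_\ast)$ with the claimed bounds, the regularity $\Grad\Pi\in L^\infty_tH^{m-2}$ following directly by reading off $\Grad\Pi$ from the momentum equation; uniqueness is obtained by applying the same weighted energy estimate to the difference of two solutions at the $L^2$-level and invoking Grönwall's lemma.

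The main obstacle I anticipate is the systematic bookkeeping of the commutators involving $b$, which appear both in the weighted divergence constraint and in the variable coefficient of the viscous operator: none of them is conceptually delicate, but verifying that the regularity $b\in C^m$ is just enough to close the $H^m$ estimate requires care, and a naive count risks requiring one extra derivative on $b$. A second delicate point, specific to the whole-space case $\Omega=\R^3$, is the solvability of the elliptic problem defining $\mathbb{P}_b$ and $\Grad\Pi$ in homogeneous Sobolev spaces; the asymptotic behavior $b\to\oline{\vr}>0$ at infinity, inherited from \eqref{const-b-1_bis} and \eqref{eq:far-field}, is the essential ingredient here, since it makes $\Div(b\Grad\cdot)$ a compact perturbation of $\oline{\vr}\,\Delta$ near spatial infinity and unlocks standard Riesz-type calculus.
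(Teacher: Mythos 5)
Your proposal is correct and follows essentially the same route the paper implicitly relies on: the paper does not write out a proof, but simply points to the classical literature (Kato, McGrath, Antontsev--Kazhikov--Monakhov, and Section 2.2 of Donatelli--Feireisl), all of which rest on exactly the weighted energy method you describe. Your reconstruction — projecting with a $b$-weighted Leray operator to eliminate the pressure, testing $\partial^\alpha$ of the equation against $b\,\partial^\alpha\mathbf{U}$ so that the constraint kills the transport term, harvesting the dissipation $\mu\int_\Omega b\,|\bd\,\partial^\alpha\mathbf{U}|^2\,\dx$ via Korn, closing the commutators by Moser estimates with $m\geq 3$ providing $\|\Grad\mathbf{U}\|_{L^\infty}$ control, and reading $\Grad\Pi$ off from $(I-\mathbb{P}_b)$ applied to the remaining terms — is a faithful account of what those references do, and your two flagged difficulties (the derivative count on $b$ in the commutator $[\partial^\alpha, b^{-1}\Div(b\,\bd\cdot)]$, and the homogeneous-Sobolev elliptic theory for $\mathbb{P}_b$ on $\R^3$ using $b\to\oline{\vr}$ at infinity) are the genuinely delicate points.

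Two small things worth tightening if you were to write the argument out in full. First, once you apply $\mathbb{P}_b$ to eliminate pressure and then differentiate, the operator $\mathbb{P}_b$ does not commute with $\partial^\alpha$ (since $b$ depends on $x$), so either the commutators $[\partial^\alpha,\mathbb{P}_b]$ must be folded into your bookkeeping, or — more cleanly — you should differentiate the \emph{unprojected} equation, test against $b\,\partial^\alpha\mathbf{U}$, and control the pressure contribution $\int_\Omega \Grad\partial^\alpha\Pi\cdot b\,\partial^\alpha\mathbf{U}\,\dx$ by integrating by parts, observing that $\Div(b\,\partial^\alpha\mathbf{U})=-[\partial^\alpha,\Div(b\,\cdot\,)]\mathbf{U}$ is a lower-order commutator. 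Second, on the derivative count for $b$: a naive bound on $\partial^\alpha\bigl(b^{-1}\Div(b\,\bd\mathbf{U})\bigr)$ indeed seems to require $b\in C^{m+1}$, but one additional integration by parts in the weighted test inner product transfers one derivative off $b$ and onto $\partial^\alpha\mathbf{U}$, landing exactly on $C^m$; spelling this out is the "care" you anticipate. Neither issue is a gap in the reasoning — they are the expected technicalities — and for the purposes of the present paper they are moot, since the theorem is only invoked with $b\in C^\infty$.
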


The proof follows from the classical theory of incompressible Navier–Stokes equations \cite{Ka72,Mc68},
since $b$ is smooth and bounded away from zero, or equivalently from the framework of semi-stationary inhomogeneous incompressible Navier–Stokes equations, see Chapter 3 of \cite{AKM90} or Section 2.2 of \cite{DF-17}.

For the sake of completeness, let us mention that in the periodic case the pressure $\Pi$ is fixed by the normalization $\int_{\mathbb{T}^3}\Pi(t,x)\dx=0$; in $\R^3$,
instead, $\Grad\Pi$ is unique up to an additive time-dependent constant.

\subsection{Statement of the main results} \label{ss:results}

We can now state the main theorems describing the singular limit under well-prepared and ill-prepared initial data.

\begin{Theorem}[Well-prepared data]\label{TH1}
Let $\Omega=\T^3$. 
Let $G\in C^\infty(\Omega)$ and $b\in C^\infty(\Omega)$ satisfy \eqref{const-b-1_bis} and \eqref{const-b-2}.
Let the initial data be subject to
\begin{align*}
& \vr_{0,\ep}=b+\ep \vr_{0,\ep}^{(1)},\quad   \vr_{0,\ep}^{(1)}  \rightarrow 0 \text{   strongly in  }  L^{\infty}( \mathbb{T}^3 ) ,   \\
&  \{ \Grad \vr_{0,\ep}^{(1)} \}_{\ep>0} \text{   is uniformly bounded in  } L^{\infty}(\mathbb{T}^3 ;\mathbb{R}^3),  \\ 
&   \vu_{0,\ep} \rightarrow \mathbf{U}_0 \text{   strongly in  }  L^2(\mathbb{T}^3 ;\mathbb{R}^3),\quad  \Div (b \mathbf{U}_0)=0, \\
&  \mathbf{U}_0 \in H^{m}(\mathbb{T}^3;\mathbb{R}^3), \quad
\text{  with  } m\in\N\,, \ m \geq 3. 
\end{align*}
Let $(\vr_{\ep},\vv_{\ep},\vw_{\ep})$ be a $\kappa$-entropy solution to  \eqref{dege-ns-aug} emanating from $(\vr_{0,\ep},\vv_{0,\ep},\vw_{0,\ep})$ as in Definition \ref{def-1a}, and let $\vU$ be the local strong solution of \eqref{ane-app} from Theorem \ref{thm:LWP}, with initial data $\vU_0$ and defined on the time interval $[0,T_\ast)$.

Define
\begin{align*}
\vV=\vU + 2\kappa \mu \Grad \log b,\quad 
\vW=2\sqrt{\kappa (1-\kappa)} \, \mu \Grad \log b.
\end{align*}
Then, for the relative-entropy functional $\mathscr{E}$ defined in \eqref{eq:def-E}, for any $0<T<T_\ast$ we have
\begin{align*}
\sup_{t\in[0,T]}\mathscr{E}(\vr_{\ep},\vv_{\ep},\vw_{\ep} | b,\vV ,\vW )(t) \rightarrow 0.
\end{align*}
Furthermore, in the limit $\ep\to 0$, for any $0<T<T_{\ast}$ we have:
\eq{\label{conv:well}
& \vr_{\ep} \rightarrow b \quad \text{ strongly in } 
L^{\infty}\big(0,T; L^{ \min\{2,\gamma\} }(\mathbb{T}^3)  \big) ,
\\
& 
\sqrt{\vr_{\ep}} \vv_{\ep}
\rightarrow 
\sqrt{b} \vV 
\quad 
\text{ strongly in } 
L^{\infty}\big( 0,T; L^2( \mathbb{T}^3 ; \mathbb{R}^3)  \big),
\\
& 
\sqrt{\vr_{\ep}} \vw_{\ep}
\rightarrow 
\sqrt{b} \vW 
\quad 
\text{strongly in } 
L^{\infty}\big( 0,T; L^2( \mathbb{T}^3 ; \mathbb{R}^3)  \big),
\\
& 
\sqrt{\vr_{\ep}} \vu_{\ep}
\rightarrow 
\sqrt{b} \vU 
\quad 
\text{ strongly in } 
L^{\infty}\big( 0,T; L^2( \mathbb{T}^3 ; \mathbb{R}^3)  \big).
}
\end{Theorem}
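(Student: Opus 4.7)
\medskip
\noindent\textbf{Proof strategy for Theorem \ref{TH1}.}
The plan is to use the relative $\kappa$-entropy inequality (derived in Section \ref{sec:REI}) with the smooth test state $(r,\vV,\vW)=(b,\vV,\vW)$, where $\vU$ is the strong solution of the anelastic system and $\vV,\vW$ are given in the statement, and then to close the estimate by a Grönwall-type argument. Concretely, I plan to substitute this test state into the relative entropy inequality and group the right-hand side into three kinds of contributions: (a) the initial relative entropy $\mathscr{E}_\ep(0)$, (b) a positive quadratic form in $\vv_\ep-\vV$, $\vw_\ep-\vW$ and $\vr_\ep-b$ that can be absorbed into a multiple of $\mathscr{E}_\ep(t)$, and (c) a remainder $R_\ep(t)$ that encodes the extent to which $(b,\vV,\vW)$ fails to solve the primitive system \eqref{dege-ns-aug}. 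Once I prove $\mathscr{E}_\ep(0)\to 0$ and $R_\ep(t)\to 0$ uniformly on $[0,T]$, Grönwall yields $\sup_{[0,T]}\mathscr{E}_\ep(t)\to 0$.

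The first step is to check that the initial relative entropy vanishes in the limit. For the kinetic part, from $\vv_{0,\ep}-\vV_0=(\vu_{0,\ep}-\vU_0)+2\kappa\mu\bigl(\nabla\log\vr_{0,\ep}-\nabla\log b\bigr)$, using $\vr_{0,\ep}=b+\ep\vr_{0,\ep}^{(1)}$ with $\vr_{0,\ep}^{(1)}\to 0$ in $L^\infty$ and $\nabla\vr_{0,\ep}^{(1)}$ bounded in $L^\infty$, together with the lower bound $b\geq\underline b>0$, one gets $\vv_{0,\ep}-\vV_0\to 0$ in $L^2$ and analogously for $\vw_{0,\ep}-\vW_0$. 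For the pressure part, a Taylor expansion of $P$ around $b$ gives
\begin{equation*}
\frac{1}{\ep^2}\bigl(P(\vr_{0,\ep})-P(b)-P'(b)(\vr_{0,\ep}-b)\bigr)\lesssim \bigl|\vr_{0,\ep}^{(1)}\bigr|^2,
\end{equation*}
which also tends to $0$. Since $\vr_{0,\ep}$ is bounded in $L^\infty$, both contributions go to zero.

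The second and hardest step is to control the remainder $R_\ep(t)$. Here I would substitute $(b,\vV,\vW)$ into the strong formulation of \eqref{dege-ns-aug} and compute the resulting residuals. The mass equation contributes $\partial_t b+\div(b\vU)+\sqrt{\tfrac{\kappa}{1-\kappa}}\div(b\vW)-\ldots$, which vanishes at leading order thanks to $\partial_t b=0$, $\div(b\vU)=0$ and the explicit form of $\vW$. In the $\vv$-momentum equation the $\ep^{-2}$ singular terms $\nabla p(b)-b\nabla G$ cancel exactly by the stationary relation \eqref{const-b-1_bis}, while the remaining inertial, viscous and $\nabla\vw$-cross terms give an anelastic-type equation satisfied by $\vU$ up to commutators involving $\nabla\log b$, all of $O(1)$ and smooth. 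The $\vw$-equation residual is handled similarly using Remark \ref{r:w}, which identifies $\sqrt{\vr}\vw$ with $2\mu\sqrt{\kappa(1-\kappa)}\sqrt\vr\nabla\log\vr$. Each term in $R_\ep(t)$ then takes one of three forms: (i) a term bounded by $C\,\mathscr{E}_\ep(t)$ (for instance convective terms treated by integration by parts, boundedness of $\nabla\vV,\nabla\vW$ and the $L^\infty$ bound on $b$), (ii) a term multiplied by an explicit power of $\ep$, or (iii) a term of the form $\int_\Om (\vr_\ep-b)\psi\dx$ with $\psi$ smooth, that can be controlled by $\sqrt{\mathscr{E}_\ep(t)}$ via the Taylor estimate $|\vr_\ep-b|^2\lesssim P(\vr_\ep)-P(b)-P'(b)(\vr_\ep-b)$ on the bounded density range (splitting into essential/residual parts to handle $1<\gamma<2$). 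The main obstacle is precisely the handling of these ``essential/residual'' splittings for the density, together with making rigorous sense of the cross-terms involving the bared quantities $\overline{\sqrt{\vr_\ep}\,\bd\vv_\ep}$, $\overline{\sqrt{\vr_\ep}\,\ba\vv_\ep}$ and $\overline{\sqrt{\vr_\ep}\,\nabla\vw_\ep}$: here I would rely crucially on the identities of Remarks \ref{Rem:2.1} and \ref{r:w} and on the regularity class \eqref{regu} furnished by the uniform $\kappa$-entropy bounds of Section \ref{Sec:unif}.

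The final step is conversion of the entropy estimate into the claimed strong convergences \eqref{conv:well}. Grönwall applied to $\mathscr{E}_\ep(t)\leq \mathscr{E}_\ep(0)+\int_0^t C\,\mathscr{E}_\ep(s)\ds+\ep\text{-remainder}$ on $[0,T]$ with $T<T_\ast$ gives $\sup_{[0,T]}\mathscr{E}_\ep(t)\to 0$. The bounds
\begin{equation*}
\int_\Om \vr_\ep|\vv_\ep-\vV|^2\dx\to 0,\qquad \int_\Om \vr_\ep|\vw_\ep-\vW|^2\dx\to 0
\end{equation*}
yield $\sqrt{\vr_\ep}\vv_\ep\to\sqrt b\,\vV$ and $\sqrt{\vr_\ep}\vw_\ep\to\sqrt b\,\vW$ in $L^\infty_t L^2_x$, once combined with $\sqrt{\vr_\ep}\to\sqrt b$ in $L^\infty_t L^2_x$, which itself comes from the pressure part of $\mathscr{E}_\ep$ via the lower bound $|\sqrt{\vr_\ep}-\sqrt b|^2\lesssim P(\vr_\ep)-P(b)-P'(b)(\vr_\ep-b)$ (valid since $\vr_\ep$ stays in a set where $P$ is uniformly convex after the usual essential/residual split). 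The convergence of $\sqrt{\vr_\ep}\vu_\ep$ follows from $\vu_\ep=\vv_\ep-\sqrt{\tfrac{\kappa}{1-\kappa}}\vw_\ep$ and $\vU=\vV-\sqrt{\tfrac{\kappa}{1-\kappa}}\vW$, and the density convergence in $L^\infty_t L^{\min\{2,\gamma\}}_x$ is read off directly from the pressure part of $\mathscr{E}_\ep$, splitting into $\{\vr_\ep\leq 2\overline b\}$ (where the $L^2$ bound is direct) and the residual set (where $\gamma$-based coercivity is used).
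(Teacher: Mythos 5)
Your overall plan --- insert $(b,\vV,\vW)$ into the reduced relative $\kappa$-entropy inequality and close by Gr\"onwall --- is exactly the paper's. Your check that the initial relative entropy vanishes, and your use of Remark~\ref{r:w} (the identity $\sqrt{\vr_\ep}\,\vw_\ep = 2\mu\sqrt{\kappa(1-\kappa)}\,\sqrt{\vr_\ep}\,\nabla\log\vr_\ep$) to absorb the commutators in $\nabla\log\vr_\ep-\nabla\log b$ into $C\int\mathscr{E}_\ep$, are the right moves and match what the paper does with the terms $\mathcal A_2,\mathcal A_3,\mathcal A_4$. You are also right that the cross terms involving bared quantities must be handled via Remarks~\ref{Rem:2.1} and~\ref{r:w}; the paper implements this through the decomposition $Z_1,\dots,Z_6$ and the cancellation of $Z_{1,b}$, $Z_{2,b}$ against $R_7$, $R_8$.

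The gap is in your taxonomy (i)--(iii) of remainder terms, which omits the pressure-gradient contribution that appears once $\partial_t\vV+\vU\cdot\nabla\vV$ is substituted from \eqref{ane-app2}:
\[
\mathcal A_1 \,=\, \int_0^\tau\!\!\int_\Omega \vr_\ep\,(\vv_\ep - \vV)\cdot\nabla\Pi\,\dx\,\ds\,.
\]
This is not of the form $\int(\vr_\ep-b)\psi$, it is not $O(\mathscr{E}_\ep)$, and it carries no explicit power of $\ep$. The naive Cauchy--Schwarz estimate gives $|\mathcal A_1|\lesssim\int_0^\tau\sqrt{\mathscr{E}_\ep(s)}\,\ds$, but after Young's inequality this produces a constant $O(T)$ error that does not vanish as $\ep\to0$, so the Gr\"onwall loop cannot be closed this way. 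The paper resolves $\mathcal A_1$ by a qualitatively different argument (Subsection~\ref{Sec:5.2}): after splitting it as $\int(\vr_\ep\vu_\ep-\vr_\ep\vU)\cdot\nabla\Pi + \beta\int\big((\vr_\ep\vw_\ep-b\vW)-(\vr_\ep-b)\vW\big)\cdot\nabla\Pi$, it invokes the weak-$*$ convergences \eqref{eq:conv_prop}, the limiting constraint $\Div\vm=0$ from \eqref{eq:constr-well}, the identification $\vr_\ep\vw_\ep\stackrel{*}{\rightharpoonup}b\vW$ from \eqref{vw:conv}--\eqref{vw:conv2}, and Lemma~2.1 of~\cite{C-F-G}, concluding that $\mathcal A_1$ is a genuine remainder with $\sup_{[0,T]}|R_\ep|\to0$. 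Without this weak-compactness ingredient (or an alternative quantitative treatment, e.g.\ integrating by parts in $\mathcal A_1$ via the weak continuity equation tested against $\Pi$ and the relation $\vr_\ep\vw_\ep = 2\mu\sqrt{\kappa(1-\kappa)}\,\nabla\vr_\ep$), your proof cannot be completed.
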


\bigskip

As mentioned in the introduction, the ill-prepared data case requires $\Omega = \R^3$. 
Moreover, as in \cite{FJN14}, we assume that $G\in C_c^{\infty}(\mathbb{R}^3)$ with $G \geq 0$, although these requirements could be somehow relaxed in the spirit
of \cite{DF-17}. In addition,  we will assume that $b$ not only to solve
\eqref{const-b-1_bis}, \eqref{const-b-2}, but also to satisfies the far-field condition $b(x) \to \oline{\vr}$ for $|x|\to \infty$, for some constant value $\oline\vr>0$.
Then, $b$ must obey $P'(b)=G+P'(\overline{\vr})$, which, together with the assumption that $p'(\vr)>0$ for all $\vr>0$, gives
\begin{align}\label{eq:prop-b}
    b\in C^{\infty}(\mathbb{R}^3),
    \quad 
    b(x)\geq \overline{\vr}>0 \quad \text{ for  } x\in \mathbb{R}^3,
    \quad 
    b(x)= \overline{\vr} \quad \text{ for  } x\in \mathbb{R}^3 \backslash \text{supp}\, G. 
\end{align}
It goes without saying that \eqref{eq:prop-b} in particular implies \eqref{const-b-2}.

Before stating our second result, let us also introduce the generalized Helmholtz decomposition associated with
the reference density profile $b$. For any vector field $\mathbf{z}\in L^2(\R^3;\R^3)$, we write
\begin{align*}
\mathbf{z}= 
\mathbb{Q}_b[\mathbf{z}]+\mathbb{P}_b[\mathbf{z}],\quad \text{ where } \quad\mathbb{Q}_b[\mathbf{z}]:=b\Grad \Phi, \,\,\,\,\,\, \Div \,\mathbb{P}_b[\mathbf{z}]=0, 
\end{align*}
where $\Phi\in \dot{H}^{1}(\mathbb{R}^3)$ is the unique solution to the elliptic equation
\begin{align*}
\Div(b \Grad \Phi )=\Div \, \mathbf{z}  
\quad \text{ with } \mathbf{z}\in L^2(\mathbb{R}^3;\mathbb{R}^3).
\end{align*}
It is well-known that $\mathbb{P}_b[\mathbf{z}]$ and $\mathbb{Q}_b[\mathbf{z}]$ are bounded linear operators in $L^p(\mathbb{R}^3;\mathbb{R}^3)$ for any $1<p<\infty$, and orthogonal in the weighted Hilbert space $L^2_b(\mathbb{R}^3;\mathbb{R}^3)$ with weight $1/b$.

\begin{Theorem}[Ill-prepared data]\label{TH2}
Let $\Omega=\R^3$. Let $G\in C^\infty_c(\Omega)$ and $b\in C^\infty(\Omega)$ satisfy \eqref{const-b-1_bis} and \eqref{eq:prop-b}.
Assume that the initial data satisfy
\begin{align*}
&  \vr_{0,\ep}=b+\ep \phi_{0,\ep},       \\ 
&  \{  \phi_{0,\ep}\}_{\ep>0} \text{  is uniformly bounded in  }L^{2}(\mathbb{R}^3) \cap L^{\infty}(\mathbb{R}^3) ,        \\
&  \left\{  \Grad \log \f{ \vr_{0,\ep} }{b}\right \}_{\ep>0} \text{  is uniformly bounded in  } L^{2}(\mathbb{R}^3;\mathbb{R}^3 ) \cap L^{\infty}(\mathbb{R}^3;\R^3) ,  \\
&  \{  \vu_{0,\ep}\}_{\ep>0} \text{  is uniformly bounded in  } L^{2}(\mathbb{R}^3;\mathbb{R}^3 ) 
\cap 
L^{\infty}(\mathbb{R}^3;\R^3) ,        \\
& \phi_{0,\ep} \rightarrow \phi_0 \text{   and   } \vu_{0,\ep} \rightarrow \vu_0 \text{ a.e. in } \mathbb{R}^3 ,        \\
& \f{1}{b} \mathbb{P}_b[b \vu_0] \in H^{m}(\mathbb{R}^3;\R^3),  \quad 
\text{  with  } m\in\N\,, \ m \geq 3.
\end{align*} 
Let  $(\vr_{\ep},\vv_{\ep},\vw_{\ep})$ be a $\kappa$-entropy solution to \eqref{dege-ns-aug} emanating from $(\vr_{0,\ep},\vv_{0,\ep},\vw_{0,\ep})$ and let  $\vU$ be the local strong solution of \eqref{ane-app} from Theorem \ref{thm:LWP} with initial data $\vU_0 := \f{1}{b} \mathbb{P}_b[b \vu_0]$ on $[0,T_{\ast})$.

Define
\begin{align*}
    \vV=\vU + 2\kappa \mu \Grad \log b,\,\,\,\, \vW=2\sqrt{\kappa (1-\kappa)} \, \mu \Grad \log b.
\end{align*}
Then,
in the limit $\ep\to 0$, for any $0<T<T_{\ast}$ we have the following convergence properties:
\begin{align*}
& 
\vr_{\ep} \rightarrow b \quad \text{ strongly in } 
L^{\infty}(0,T; L^2(\mathbb{R}^3)+L^{\gamma}(\mathbb{R}^3)  ) ,
\\
& 
\sqrt{\vr_{\ep}} \vv_{\ep}
\rightarrow 
\sqrt{b} \vV 
\quad 
\begin{cases}
\text{ strongly in } L^2( 0,T; L^2_{\rm loc}( \mathbb{R}^3 ; \mathbb{R}^3)  ),  \\
\text{ weakly-$\ast$ in } L^{\infty}( 0,T; L^2( \mathbb{R}^3 ; \mathbb{R}^3)  ),
\end{cases}
\\
& 
\sqrt{\vr_{\ep}} \vw_{\ep}
\rightarrow 
\sqrt{b} \vW 
\quad 
\begin{cases}
\text{ strongly in } L^2( 0,T; L^2_{\rm loc}( \mathbb{R}^3 ; \mathbb{R}^3)  ),  \\
\text{ weakly-$\ast$ in } L^{\infty}( 0,T; L^2( \mathbb{R}^3 ; \mathbb{R}^3)  ).
\end{cases}
\end{align*}
\end{Theorem}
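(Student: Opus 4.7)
The plan is to combine the relative $\k$-entropy machinery developed in Section \ref{sec:REI} with a dispersive analysis of acoustic waves, in the spirit of \cite{DF-17} and \cite{FJN14}. Since the initial datum of the primitive system carries a non-vanishing compressible part, the triple $(b,\vV,\vW)$ cannot be used directly as a test state in the relative $\k$-entropy inequality: the initial relative entropy would not vanish in the limit $\ep\to 0$. The idea is then to correct the test state by an oscillatory contribution $(\ep\Phi_\ep,\Grad\Psi_\ep)$ encoding the propagation of acoustic modes, and to compare $(\vr_\ep,\vv_\ep,\vw_\ep)$ with
\begin{equation*}
r_\ep = b + \ep\Phi_\ep,\qquad \vV_\ep = \vU + \Grad\Psi_\ep + 2\k\mu\Grad\log r_\ep,\qquad \vW_\ep = 2\sqrt{\k(1-\k)}\,\mu\Grad\log r_\ep,
\end{equation*}
where $\vU$ is the strong anelastic solution furnished by Theorem \ref{thm:LWP} with initial datum $\vU_0 = \frac{1}{b}\mathbb{P}_b[b\vu_0]$, which is divergence-free with weight $b$ by construction.

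\textbf{Acoustic wave system and Strichartz decay.} The pair $(\Phi_\ep,\Grad\Psi_\ep)$ is defined as the solution of a linear wave system obtained by linearising \eqref{dege-ns} around the stationary profile $(b,\mathbf{0})$ and retaining only the $O(1/\ep)$ terms, of the schematic form
\begin{equation*}
\ep\,\p_t\Phi_\ep + \Div(b\Grad\Psi_\ep) = 0,\qquad \ep\,\p_t\Grad\Psi_\ep + \Grad\!\left(\frac{p'(b)}{b}\Phi_\ep\right) = 0,
\end{equation*}
with $\Phi_\ep|_{t=0}=\phi_{0,\ep}$ and $\Grad\Psi_\ep|_{t=0} = \frac{1}{b}\mathbb{Q}_b[b\vu_{0,\ep}]$. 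Since $G$ is compactly supported, $b\equiv\overline{\vr}$ outside $\supp G$, and the underlying wave operator asymptotes the free d'Alembertian in $\R^3$ with speed $\sqrt{p'(\overline{\vr})}$. Adapting the RAGE--Strichartz scheme of \cite{DF-17}---which exploits the purely absolutely continuous spectrum of $-\frac{1}{b}\Div(b\Grad\cdot)$ on $L^2_b(\R^3)$ together with frequency-localised Strichartz estimates---we obtain, for any compact $K\subset\R^3$ and suitable Strichartz pair $(q,p)$,
\begin{equation*}
\bigl\|(\Phi_\ep,\Grad\Psi_\ep)\bigr\|_{L^q(0,T;L^p(K))}\longrightarrow 0 \quad\text{as }\ep\to 0.
\end{equation*}

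\textbf{Relative entropy estimate and conclusion.} Plugging $(r_\ep,\vV_\ep,\vW_\ep)$ into the improved relative $\k$-entropy inequality of Subsection \ref{ss:improved} and using the uniform bounds of Section \ref{Sec:unif}, the right-hand side decomposes into three families of terms: those involving only $(b,\vU,\vV,\vW)$, treated exactly as in the proof of Theorem \ref{TH1}; those carrying an explicit prefactor $\ep$ or $\ep^2$, which vanish in the limit; and cross terms pairing the primitive unknowns with $(\Phi_\ep,\Grad\Psi_\ep)$. The latter are controlled by H\"older's inequality in time and space, the Strichartz decay from the previous step compensating the boundedness of the primitive factors in suitable dual norms and forcing these cross terms to zero. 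The remaining contributions are bounded by $\mathscr{E}(\vr_\ep,\vv_\ep,\vw_\ep\mid r_\ep,\vV_\ep,\vW_\ep)$ itself, and Gronwall's lemma yields $\mathscr{E}\to 0$ uniformly on $[0,T]$. The coercivity of the pressure potential $P$ then gives $\vr_\ep\to b$ in $L^\infty_t(L^2+L^\gamma)$; subtracting $\sqrt{\vr_\ep}\Grad\Psi_\ep$ from $\sqrt{\vr_\ep}\vv_\ep$ and using the dispersive decay provides the $L^2_tL^2_{\rm loc}$ strong convergence, while the $L^\infty_tL^2$ weak-$\ast$ one is inherited from the uniform bounds, and similarly for $\sqrt{\vr_\ep}\vw_\ep$.

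\textbf{Main obstacles.} The principal difficulty is to close the BD-type contribution of the relative entropy, which involves $\Grad\log(\vr_\ep/r_\ep)$, when the test density $r_\ep$ carries the oscillatory correction $\ep\Phi_\ep$; this requires propagating strong norms on $\Grad\Phi_\ep$ in $L^\infty_t(L^2\cap L^\infty)$, motivating the uniform bounds assumed on $\Grad\log(\vr_{0,\ep}/b)$ in the statement. A second technical subtlety is the rigorous justification of the Strichartz decay with the spatially inhomogeneous coefficient $b$: the absence of point spectrum for the associated self-adjoint operator follows from the compactness of the perturbation $b-\overline{\vr}$ via a standard Kato--Agmon argument. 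Finally, because dispersive decay holds only locally in space, strong convergence of the momentum-like variables can be obtained only in $L^2_tL^2_{\rm loc}$, precisely as reflected in the statement of the theorem.
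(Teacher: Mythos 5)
Your overall strategy matches the paper: a relative $\k$-entropy comparison where the test state is augmented by an acoustic corrector solving the wave system linearized around $(b,\mathbf{0})$, with smallness coming from Strichartz-type dispersive decay, finished by Gr\"onwall and the coercivity of $\mathscr{E}$. The choice $\vW_\ep = 2\sqrt{\k(1-\k)}\,\mu\Grad\log r_\ep$ differs from the paper, which keeps $\vW_\ed=\vW=2\sqrt{\k(1-\k)}\,\mu\Grad\log b$ frozen; your choice restores the BD constraint $\vW_\ep=2\sqrt{\k(1-\k)}\mu\nabla\log r_\ep$ exactly and kills the $\mathcal{I}_{13}$-type mismatch, but in exchange produces a nontrivial $\partial_t\vW_\ep$ (since $\partial_t\log r_\ep=-\Div(b\nabla\Psi_\ep)/r_\ep$), whereas the paper gains from $\partial_t\vW=0$, $\ba\vW=0$ and pays with an extra group of terms. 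Either route should close, and the difference is genuinely a design choice worth noting.

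However, there are two concrete gaps. First, you propose to plug the test state directly into the reduced inequality of Subsection \ref{ss:improved}; but \eqref{rel-ent-2b} was derived under the hypothesis $\partial_t r + \Div(r\vU)=0$ in \eqref{h1}, and with the acoustic corrector one only has $\partial_t r_\ep + \Div(r_\ep\vU_\ep)=\ep\Div(\Phi_\ep\vU_\ep)\ne 0$. The cancellations used to pass from \eqref{E:lem3.1} to \eqref{rel-ent-2b} then fail, and the correct path is to go back to the general Lemma \ref{rela-entr-ine} and carry the $O(\ep)$ mass-equation defect explicitly as a separate term (the paper's $\mathcal{I}_7$); this is not merely an ``$\ep$-prefactored remainder'' that automatically vanishes, since it is multiplied by $\ep^{-1}P''(r_\ed)(\vr_\ep-r_\ed)$ and needs the residual/essential splitting plus the dispersive estimate to close. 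Second, you take the acoustic initial data to be the raw $\phi_{0,\ep}$ and $\frac{1}{b}\mathbb{Q}_b[b\vu_{0,\ep}]$, which live only in $L^2\cap L^\infty$: the resulting solution of \eqref{acc-wave} is then not in the $C^1$ test-function class required by Lemma \ref{rela-entr-ine}, there is no a priori bound on $\Delta\Phi_\ep$ or $\Grad s_\ep$ in $L^\infty$, and nothing guarantees $r_\ep>0$. The paper resolves all three issues by the regularization $[\cdot]_\delta=\mathcal{M}_\delta(\sqrt{\mathcal{T}_b})[\psi_\delta\,\cdot\,]$ (frequency cut-off plus spatial localization), which produces the bounds \eqref{acc-wave-reg}--\eqref{acc-wave-dis} with constants $C(m,\delta)$, $\omega(m,\ep,\delta)$ and forces a double limit $\ep\to 0$ then $\delta\to 0$; without this, the dispersive estimate in $W^{m,\infty}$ is not even finite and the Gr\"onwall argument cannot be closed. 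Your remark about ``propagating strong norms of $\Grad\Phi_\ep$ in $L^\infty_t(L^2\cap L^\infty)$'' hints at the difficulty but does not supply the mechanism (the $\delta$-truncation) that actually delivers those norms.
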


\section{Relative $\kappa$-entropy inequality}\label{sec:REI}
In this section we derive and rigorously justify the relative $\kappa$-entropy inequality satisfied by weak $\kappa$-entropy solutions of the degenerate compressible Navier–Stokes system \eqref{dege-ns-aug}. Formally, this inequality has been derived in Bresch et al. \cite{BNV15} without the external force, and we focus here on the rigorous derivation relying on the definition of the weak solution.

\subsection{Derivation for general test functions}\label{Sect:3.1}
The purpose of this section is to prove the following result.

\begin{Lemma}\label{rela-entr-ine}
Let $\Omega=\T^3$ or  $\R^3$ and let $T>0$ and $\kappa\in(0,1)$.
Let $(\vr,\vv,\vw)$ be a $\kappa$-entropy solution to system \eqref{dege-ns-aug}
in the sense of Definition \ref{def-1a} and let $(r,\vV,\vW)$ be a triple of smooth functions such that 
\begin{align*}
& r\in C^1([0,T]\times \Om), \quad r>0, \quad  \vV,\vW \in C^1([0,T]\times \Om;\mathbb{R}^3),\\ 
& 
r \rightarrow \overline{\vr} \quad \text{as} \quad|x| \rightarrow \infty \quad \text{in case of} \quad\Omega= \mathbb{R}^3.  
\end{align*}
Let $\mathscr{E}$ be given by \eqref{eq:def-E}.

Then, for any $\t\in[0,T]$, one has the following inequality:
\eq{\label{E:lem3.1}
& \mathscr{E}( \vr,\vv,\vw| r,\vV,\vW)(\tau)  -\mathscr{E}( \vr,\vv,\vw| r,\vV,\vW)(0)  \\
& \quad \quad +   2 \kappa \mu \int_0^{\tau}  \!\!\!\int_{\Om} \left| \oline{\sqrt{\vr} \, \ba(\vv-\vV) } \right|^2 \dx\ds + 
2\mu \int_0^{\tau}  \!\!\!\int_{\Om} \left|\oline{ \sqrt{\vr} \, \bd \left( \sqrt{1-\kappa}(\vv-\vV)-\sqrt{\kappa}(\vw-\vW)   \right)    }     \right|^2       \dx\ds                    \\
& \quad \quad + 2\f{\kappa \mu}{\ep^2} \int_0^{\tau}  \!\!\!\int_{\Om} \vr \Big( p'(\vr)\Grad \log \vr-p'(r)\Grad \log r  \Big) \cdot (\Grad \log \vr-\Grad \log r    )           \dx\ds \\ 
& \quad  \leq  \int_0^{\tau} \int_{\Om}\vr \left[  \left( \vv-\sqrt{ \f{\ka}{1-\ka} } \vw  \right)\cdot \Grad \vW \cdot (\vW-\vw)
+\left( \vv-\sqrt{ \f{\ka}{1-\ka} } \vw  \right)\cdot \Grad \vV \cdot (\vV-\vv)             \right] \dx\ds\\
& \quad \quad + \int_0^{\tau}  \!\!\!\int_{\Om}\vr \Big(  \p_t \vW\cdot (\vW-\vw)+\p_t \vV \cdot (\vV-\vv)     \Big) \dx\ds \\ 
& \quad \quad +\f{1}{\ep^2} \int_0^{\tau}  \!\!\!\int_{\Om}\p_t P'(r)(r-\vr) \dx\ds \\
& \quad \quad 
- 
\f{1}{\ep^2} \int_0^{\tau}  \!\!\!\int_{\Om}\Grad P'(r) \cdot \left[ \vr \left( \vv-\sqrt{ \f{\ka}{1-\ka} } \vw  \right)-r\left( \vV-\sqrt{ \f{\ka}{1-\ka} } \vW   \right)         \right] \dx\ds  \\
& \quad \quad +\f{1}{\ep^2} \int_0^{\tau}  \!\!\!\int_{\Om}( p(r)-p(\vr)  ) \Div \left( \vV-\sqrt{ \f{\ka}{1-\ka} } \vW   \right)  \dx\ds \\
&\quad \quad -\f{\ka}{\ep^2} \int_0^{\tau}  \!\!\!\int_{\Om}p'(\vr)\Grad \vr \cdot \left( 2\mu \f{\Grad r}{r} - \f{1}{ \sqrt{\ka(1-\ka)}  }\vW       \right) \dx\ds \\ 
& \quad \quad 
+ 2\mu \int_0^{\tau}  \!\!\!\int_{\Om} \sqrt{\vr} 
\Big(  
\bd(\sqrt{1-\kappa} \vV)
-   \bd( \sqrt{\ka} \vW)    
\Big):
\oline{\sqrt{\vr}\Big(  
\bd(\sqrt{1-\kappa} (\vV-\vv))
-    \bd( \sqrt{\ka} (\vW-\vw))     
\Big)}       \dx\ds \\
&  \quad \quad + 2\ka \mu \int_0^{\tau}  \!\!\!\int_{\Om} \sqrt{\vr} \, \ba  \vV: \oline{\sqrt{\vr} \, \ba(\vV-\vv)} \dx\ds
+ 
2\f{\kappa \mu}{\ep^2} \int_0^{\tau}  \!\!\!\int_{\Om}\f{\vr}{r} p'(r) \Grad r \cdot \left( \f{\Grad r}{r}-\f{\Grad \vr}{\vr}  \right)           \dx\ds   \\
&
\quad\quad +2 \sqrt{ \ka (1-\ka) } \mu \int_0^{\tau}  \!\!\!\int_{\Om}   \sqrt{\vr} \, \ba \vW: \oline{\sqrt{\vr} \,\ba \vv}    \dx\ds
+ \f{1}{\ep^2} \int_0^{\tau} \int_{\Om}\vr \Grad G \cdot (\vv-\vV) \dx\ds,
}
where we denoted for brevity $\Ov{\sqrt{\vr} \,\bd \lr{\vV-\vv}}=\sqrt\vr \, \bd\vV-\Ov{\sqrt{\vr} \, \bd \vv}$, and similarly for the other terms.
\end{Lemma}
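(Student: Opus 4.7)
The approach is the classical relative entropy method, adapted to the two-velocity formulation and to the ``bared'' nature of the gradient terms. I start from the $\k$-entropy inequality \eqref{energy-inequa} applied to $(\vr,\vv,\vw)$, and expand
\[
\tfrac12|\vv-\vV|^2=\tfrac12|\vv|^2-\vv\cdot\vV+\tfrac12|\vV|^2, \qquad \tfrac12|\vw-\vW|^2=\tfrac12|\vw|^2-\vw\cdot\vW+\tfrac12|\vW|^2,
\]
together with the algebraic identity $P(\vr)-P(r)-P'(r)(\vr-r)=P(\vr)+rP'(r)-P(r)-\vr P'(r)$. The three ``native'' terms $\tfrac{1}{2}\!\int\vr|\vv|^2$, $\tfrac{1}{2}\!\int\vr|\vw|^2$ and $\tfrac{1}{\ep^2}\!\int P(\vr)$ appear on both sides via \eqref{energy-inequa}; the remaining contributions, each linear or quadratic in the smooth profile $(r,\vV,\vW)$, must be generated via the distributional equations.

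Concretely, I would test the momentum equations \eqref{balan-momentu-aug} and \eqref{balan-momentu-aug-2} against $\vV$ and $\vW$ respectively, and the continuity equation \eqref{equ-conti-aug} against $P'(r)/\ep^2$, $\tfrac12|\vV|^2$ and $\tfrac12|\vW|^2$. Summing these reconstructs the missing endpoint terms $\int\vr\vv\cdot\vV$, $\int\vr\vw\cdot\vW$, $\tfrac{1}{\ep^2}\!\int\vr P'(r)$, $\tfrac{1}{2}\!\int\vr|\vV|^2$, $\tfrac{1}{2}\!\int\vr|\vW|^2$ evaluated between $0$ and $\t$, and produces space--time integrals involving $\p_t\vV$, $\p_t\vW$, $\Grad\vV$, $\Grad\vW$ and $\Grad P'(r)$ that become the first four groups of terms on the right-hand side of \eqref{E:lem3.1}. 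The remaining integral $\tfrac{1}{\ep^2}\!\int P(r)$, the only one not produced by a weak equation, is handled by direct chain-rule differentiation in time, and using $rP''(r)=p'(r)$ together with the Legendre-type identity $p(r)=rP'(r)-P(r)$ transforms its flux into the term $(p(r)-p(\vr))\Div(\vV-\sqrt{\k/(1-\k)}\vW)$ appearing in the statement. The technical justifications---multiplication of non-compactly-supported test fields by a spatial cutoff when $\Om=\R^3$ (using the far-field condition $r\to\oline\vr$ and Remark \ref{r:space}), and by a time cutoff $\chi(t)\approx\mathbf{1}_{[0,\t]}$ followed by Lebesgue differentiation in $\t$---are classical.

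The main obstacle is the reconstruction of the sign-definite dissipative squares on the LHS,
\[
\big|\Ov{\sqrt{\vr}\,\ba(\vv-\vV)}\big|^2 \quad\text{and}\quad \Big|\Ov{\sqrt{\vr}\,\bd\big(\sqrt{1-\k}(\vv-\vV)-\sqrt{\k}(\vw-\vW)\big)}\Big|^2.
\]
The $\k$-entropy dissipation provides the ``pure'' squares in $\vv,\vw$; the momentum-test step provides the mixed linear-in-$\vV,\vW$ contractions with $\Ov{\sqrt\vr\,\bd\vv}$, $\Ov{\sqrt\vr\,\Grad\vw}$, $\Ov{\sqrt\vr\,\ba\vv}$. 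To complete the square, I add and subtract the smooth quantities $|\sqrt\vr\,\bd(\sqrt{1-\k}\vV-\sqrt\k\vW)|^2$ and $\k|\sqrt\vr\,\ba\vV|^2$, which generates the smooth-only cross terms $2\mu\!\int\sqrt\vr\,(\bd(\sqrt{1-\k}\vV)-\bd(\sqrt\k\vW)):\Ov{\sqrt\vr(\bd(\sqrt{1-\k}(\vV-\vv))-\bd(\sqrt\k(\vW-\vw)))}$ and $2\k\mu\!\int\sqrt\vr\,\ba\vV:\Ov{\sqrt\vr\,\ba(\vV-\vv)}$ present on the right-hand side. At this step the properties \eqref{def2:prop} are essential: $\Ov{\sqrt{\vr}\,\bd\vw}=\Ov{\sqrt{\vr}\,\Grad\vw}$ allows me to identify $\Grad\vw$ with $\bd\vw$ inside the dissipation square, while $\Ov{\sqrt{\vr}\,\ba\vw}=\mathbf{0}$ ensures that no antisymmetric-$\vw$ term enters the $\ba$-square. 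A parallel manipulation on the pressure dissipation $\tfrac{2\k\mu}{\ep^2}\!\int\tfrac{p'(\vr)}{\vr}|\Grad\vr|^2$, rewriting it as the relative form $\tfrac{2\k\mu}{\ep^2}\!\int\vr(p'(\vr)\Grad\log\vr-p'(r)\Grad\log r)\cdot(\Grad\log\vr-\Grad\log r)$ by adding and subtracting $p'(r)\Grad\log r$, produces the last two pressure-correction integrals on the right-hand side. Finally, the gravitational contribution $\tfrac{1}{\ep^2}\!\int\vr\Grad G\cdot(\vv-\vV)$ is the sum of the source in \eqref{energy-inequa} and its counterpart obtained by testing \eqref{balan-momentu-aug} against $\vV$. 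Collecting all pieces yields \eqref{E:lem3.1}.
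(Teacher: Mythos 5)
Your proposal follows the same route as the paper's proof: expand $\mathscr{E}(\tau)-\mathscr{E}(0)$, absorb the "native" terms via the $\k$-entropy inequality \eqref{energy-inequa}, test the continuity equation against $\tfrac12|\vV|^2$, $\tfrac12|\vW|^2$ and $P'(r)/\ep^2$, test the two momentum equations against $\vV$ and $\vW$, complete the dissipation squares using \eqref{def2:prop}, and recast the pressure dissipation in relative form via the Legendre identity $p(r)=rP'(r)-P(r)$. The argument is correct and matches the paper's construction term-by-term; the extra technical remarks about spatial/time cutoffs are sound housekeeping but do not change the substance.
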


\begin{proof}
We first expand the  relative $\kappa$-entropy functional introduced in \eqref{eq:def-E} as follows:
\begin{align}
	& \mathscr{E}( \vr,\vv,\vw| r,\vV,\vW)(\tau)-\mathscr{E}( \vr,\vv,\vw| r,\vV,\vW)(0) \nonumber \\
	& \quad =  \int_{\Om} \left(  \f{1}{2}\vr (|\vv|^2+|\vw|^2)  +\f{1}{\ep^2}P(\vr)         \right) (\tau) \dx -\int_{\Om} \left(  \f{1}{2}\vr (|\vv|^2+|\vw|^2)  +\f{1}{\ep^2}P(\vr)         \right) (0) \dx\nonumber \\
    & \quad \quad+ \int_{\Om} \f{1}{2}\vr (|\vV|^2+|\vW|^2)         (\tau)\dx-\int_{\Om} \f{1}{2}\vr (|\vV|^2+|\vW|^2)         (0)\dx\nonumber \\
	& \quad \quad-\int_{\Om}\vr(\vv\cdot \vV+\vw\cdot \vW)               (\tau)\dx+\int_{\Om}\vr(\vv\cdot \vV+\vw\cdot \vW)               (0)\dx\nonumber \\
	& \quad \quad- \int_{\Om}\f{1}{\ep^2} (P(r)+P'(r) (\vr-r)  )               (\tau)\dx+\int_{\Om}\f{1}{\ep^2} (P(r)+P'(r) (\vr-r)  )               (0)\dx  .\label{refE}
\end{align}

We now estimate the right-hand side of \eqref{refE} line by line. The first line is estimated using the $\kappa$-entropy inequality \eqref{energy-inequa}
More precisely, we have:
\begin{align}
    &     \int_{\Om} \left(  \f{1}{2}\vr (|\vv|^2+|\vw|^2)  +\f{1}{\ep^2}P(\vr)         \right) (\tau) \dx -\int_{\Om} \left(  \f{1}{2}\vr (|\vv|^2+|\vw|^2)  +\f{1}{\ep^2}P(\vr)         \right) (0) \dx    \nonumber     \\
    &   \quad  \leq - 2\mu \int_0^\tau \int_{\Om}   \left(   \left| \Ov{\sqrt{\vr\kappa} \, \ba \vv} \right|^2 + \left| \Ov{\sqrt{\vr}\lr{\sqrt{1-\kappa} \, \bd \vv -\sqrt{\kappa} \,\Grad\vw } } \right|^2  \right)\dx\ds  
                      \nonumber     \\
    &  \quad \quad -  2\f{\kappa \mu}{\ep^2} \int_0^\tau \int_{\Om} \f{p'(\vr)}{\vr} |\Grad \vr|^2 \dx\ds   + \f{1}{\ep^2}  \int_0^\tau \int_{\Om} \vr \Grad G \cdot \vv  \dx\ds.             \label{energy-inequa2}
\end{align}

For the second line on the right-hand side of \eqref{refE},
we  test the continuity equation by $\f{1}{2}|\vV|^2$ and $\f{1}{2}|\vW|^2$, to obtain
\eq{\label{line2}
	& \int_{\Om}  \f{1}{2}\vr (|\vV|^2+|\vW|^2)(\tau) \dx-\int_{\Om}  \f{1}{2}\vr (|\vV|^2+|\vW|^2)(0) \dx\\
    	&  =\int_0^{\tau}  \!\!\!\int_{\Om}\vr( \vV \cdot \p_t \vV+\vW \cdot \p_t \vW ) \dx\ds  \\
	& \quad +    \int_0^{\tau}  \!\!\!\int_{\Om} \vr \left[  \left( \vv-\sqrt{ \f{\ka}{1-\ka} } \vw   \right) \cdot \Grad \vV \cdot \vV     +\left( \vv-\sqrt{ \f{\ka}{1-\ka} } \vw   \right) \cdot \Grad \vW \cdot \vW                            \right]               \dx\ds. 
}

Next, to treat the third line, we test the equations for $\vv$ and $\vw$,
respectively \eqref{balan-momentu-aug} and \eqref{balan-momentu-aug-2}, by $\vV$ and $\vW$. We then get
\eq{\label{line3}
	&  - \int_{\Om} \vr (\vv \cdot \vV+ \vw \cdot \vW)(\tau) \dx+\int_{\Om} \vr (\vv \cdot \vV+ \vw \cdot \vW)(0) \dx \\
    &=-
	 \int_0^{\tau}  \!\!\!\int_{\Om} \vr(\vv\cdot \p_t \vV+ \vw \cdot \p_t \vW)\dx\ds\\
	&  \quad  - \int_0^{\tau}  \!\!\!\int_{\Om}\vr \left[  \left(  \vv-\sqrt{\f{\ka}{1-\ka}} \vw  \right) \cdot \Grad \vV \cdot \vv  + \left(  \vv-\sqrt{\f{\ka}{1-\ka}} \vw  \right) \cdot \Grad \vW \cdot \vw                          \right] \dx\ds                                      \\
	& \quad + 2\ka \mu   \int_0^{\tau}  \!\!\!\int_{\Om}   \Ov{\sqrt{\vr} \, \ba \vv} : \sqrt{\vr} \,  \ba \vV  \dx \dt        \\
	& \quad +   2 \mu   \int_0^{\tau}  \!\!\!\int_{\Om}\Ov{\sqrt{\vr} \Big( \bd(\sqrt{1-\ka}\vv)- \Grad (\sqrt{\ka} \vw)       \Big)} : \sqrt{\vr} \Big( \bd(\sqrt{1-\ka}\vV)- 
     \bd (\sqrt{\ka} \vW)    
    \Big) \dx \dt \\
	& \quad -\f{1}{\ep^2} \int_0^{\tau}  \!\!\!\int_{\Om}p(\vr)\Div \left(  \vV-\sqrt{\f{\ka}{1-\ka}}\vW  \right) \dx\ds  +\f{1}{\ep^2}  \sqrt{\f{\ka}{1-\ka}} \int_0^{\tau}  \!\!\!\int_{\Om}p'(\vr) \Grad \vr \cdot \vW \dx\ds     \\
	& \quad +   \int_0^{\tau}  \!\!\!\int_{\Om} 2 \sqrt{ \ka (1-\ka) } \mu \sqrt{\vr}  \Ov{\sqrt{\vr}  \, \ba \vv}: \ba \vW    \dx\ds                               \\
	& \quad 
    - \f{1}{\ep^2}\int_0^{\tau}  \!\!\!\int_{\Om}\vr \Grad G \cdot \vV \dx\ds.
}

Combining the bared terms in the right-hand sides of \eqref{energy-inequa2} and \eqref{line3}, we obtain
\eq{\label{almost:fin}
	  & -2 \kappa \mu \int_0^{\tau}  \!\!\!\int_{\Om} \left|\Ov{\sqrt{\vr} \, \ba \vv} \right|^2 \dx\ds - 2\mu \int_0^{\tau}  \!\!\!\int_{\Om} \left| \Ov{\sqrt{\vr} \, \bd (\sqrt{1-\kappa}\vv ) -\Grad(\sqrt{\kappa}\vw)  }
     \right|^2 \dx\ds \\ 
	 &+ 2\kappa \mu \int_0^{\tau}  \!\!\!\int_{\Om}\Ov{\sqrt{\vr} \,\ba \vv} : \sqrt{\vr} \, \ba \vV \dx \dt \\
	 &+2 \mu \int_0^{\tau}\int_{\Om}\Ov{\sqrt{\vr} \Big( \bd(\sqrt{1-\ka}\vv)- \Grad (\sqrt{\ka} \vw)       \Big)} : \sqrt{\vr} 
     \Big(
     \bd(\sqrt{1-\ka}\vV)
     - \bd (\sqrt{\ka} \vW)    
     \Big) \dx \dt \\
     &+   \int_0^{\tau}  \!\!\!\int_{\Om} 2 \sqrt{ \ka (1-\ka) } \mu \sqrt{\vr} \, \Ov{\sqrt{\vr}  \, \ba \vv}: \ba \vW    \dx\ds  \\
	 =&  - 2 \kappa \mu \int_0^{\tau}  \!\!\!\int_{\Om}\Ov{\sqrt{\vr} \,\ba \vv} :  \Ov{\sqrt{\vr} \, \ba \lr{\vv-\vV}} \dx\ds \\
	 & - 2\mu \int_0^{\tau}  \!\!\!\int_{\Om} \Ov{\sqrt{\vr} \, \bd (\sqrt{1-\kappa}\vv ) -\Grad(\sqrt{\kappa}\vw)  } :   
     \Ov{
     \sqrt{\vr} \, \bd (\sqrt{1-\kappa} \lr{\vv-\vV} ) 
     - \bd(\sqrt{\kappa}\lr{\vw-\vW} )  
     } \dx\ds \\
    &+   \int_0^{\tau}  \!\!\!\int_{\Om} 2 \sqrt{ \ka (1-\ka) } \mu \sqrt{\vr}  \Ov{\sqrt{\vr} \, \ba \vv}: \ba \vW    \dx\ds  \\
	 =&-2 \kappa \mu \int_0^{\tau}  \!\!\!\int_{\Om}  \left| \Ov{\sqrt{\vr} \, \ba(\vv-\vV) } \right|^2 \dx\ds - 2\mu \int_0^{\tau}  \!\!\!\int_{\Om} \left| \Ov{\sqrt{\vr} \, \bd \left( \sqrt{1-\kappa}(\vv-\vV)-\sqrt{\kappa}(\vw-\vW)   \right)         } \right|^2       \dx\ds \\
	 &  +
     2\ka \mu \int_0^{\tau}  \!\!\!\int_{\Om} \sqrt{\vr} \ba  \vV: \Ov{ \sqrt{\vr} \, \ba(\vV-\vv) } \dx\ds \\
	 &  +
     2 \mu \int_0^{\tau}  \!\!\!\int_{\Om}\sqrt{\vr} 
     \Big(  
     \bd(\sqrt{1-\kappa} \vV)
     -  \bd( \sqrt{\ka} \vW)     
     \Big): \Ov{ \sqrt{\vr} \Big(  
     \bd(\sqrt{1-\kappa} (\vV-\vv))
     - \bd( \sqrt{\ka} (\vW-\vw))   
     \Big)   }    \dx\ds\\
     &+   \int_0^{\tau}  \!\!\!\int_{\Om} 2 \sqrt{ \ka (1-\ka) } \mu \sqrt{\vr} \, \Ov{\sqrt{\vr} \,\ba \vv}: \ba \vW    \dx\ds ,
}
where we have used the property \eqref{def2:prop} multiple times, in particular that $\overline{\sqrt{\vr} \,\bd\vw}=\overline{\sqrt{\vr} \,\Grad\vw}$.

At this stage, we observe that the first two terms on the right-hand side of \eqref{almost:fin} are precisely the bared terms appearing on the left-hand side of  relative $\kappa$-entropy inequality \eqref{E:lem3.1}, while the three last ones are all the bared terms from the right-hand side of  \eqref{E:lem3.1}.

To treat the fourth line on the right-hand side of equality \eqref{refE}, we first test the continuity equation by $\f{1}{\ep^2}P'(r)$, and get
\begin{align*}
	&  -\f{1}{\ep^2} \int_0^{\tau}  \!\!\!\int_{\Om}\vr P'(r) (\tau) \dx+ \f{1}{\ep^2} \int_0^{\tau}  \!\!\!\int_{\Om}\vr P'(r) (0) \dx\\ 
    & \quad =-\f{1}{\ep^2} \int_0^{\tau}  \!\!\!\int_{\Om}\left[  \vr \p_t P'(r) + \vr  \left( \vv-\sqrt{ \f{\ka}{1-\ka} } \vw   \right) \cdot \Grad P'(r)         \right] \dx\ds.
\end{align*}
For the rest of the terms from the fourth line, we note that
the basic relation $\p_t (P'(r)r-P(r))=r \p_t P'(r) $ implies
\begin{align*}
	&  -\f{1}{\ep^2} \int_{\Om} \Big(P(r)-P'(r)r \Big)(\tau)\dx + \f{1}{\ep^2} \int_{\Om} \Big(P(r)-P'(r)r \Big)(0)\dx
	=\f{1}{\ep^2} \int_0^{\tau}  \!\!\!\int_{\Om} r \p_t P'(r) \dx\ds.
\end{align*}
We conclude the derivation of \eqref{E:lem3.1} by adding side by side the following two identities  to equation \eqref{refE}. The first one follows directly from definition \eqref{Ppot}: we have $P'(r)r - P(r) = p(r)$ for any $r>0$ and therefore, integrating by parts, we get
\begin{align*}
	0& = \f{1}{\ep^2} \int_{\Om} \Div \left( (P'(r)r-P(r) )\left( \vV-\sqrt{\f{\ka}{1-\ka}}\vW  \right)       \right) \dx \\
	&= \f{1}{\ep^2} \int_{\Om} p(r) \Div  \left( \vV-\sqrt{\f{\ka}{1-\ka}}\vW  \right)  \dx+\f{1}{\ep^2}  \int_{\Om} r \left( \vV-\sqrt{\f{\ka}{1-\ka}}\vW  \right) \cdot \Grad P'(r)  \dx.
\end{align*} 
The second one is the correcting term of the form 
\begin{align*}
	&   2\f{\kappa \mu}{\ep^2} \int_0^{\tau}  \!\!\!\int_{\Om} \vr \Big( p'(\vr)\Grad \log \vr-p'(r)\Grad \log r  \Big) \cdot (\Grad \log \vr-\Grad \log r    )           \dx\ds \\
	& \quad  =  2\f{\kappa \mu}{\ep^2} \int_0^{\tau}  \!\!\!\int_{\Om}\f{p'(\vr)}{\vr} |\Grad \vr|^2 \dx\ds \\
	& \quad   \quad 
    +   2\f{\kappa \mu}{\ep^2} \int_0^{\tau}  \!\!\!\int_{\Om}\f{\vr}{r} p'(r) \Grad r \cdot \left( \f{\Grad r}{r}-\f{\Grad \vr}{\vr}  \right)           \dx\ds -   2\f{\kappa \mu}{\ep^2} \int_0^{\tau}  \!\!\!\int_{\Om} p'(\vr) \Grad \vr \cdot \f{\Grad r}{r} \dx\ds. 
\end{align*}
Observe that the first term after the equality cancels out the penultimate term of \eqref{energy-inequa2}.

Putting all these expressions together, we finally complete the proof of Lemma \ref{rela-entr-ine}.
\end{proof}

\subsection{Reduction of the relative $\kappa$-entropy inequality} \label{ss:improved}
In this subsection we further assume that the test functions $ (r, \vV, \vW) $  used in \eqref{E:lem3.1} satisfy in addition:
\begin{align}\label{h1}
	\begin{split}
	&\partial_t r + \Div(r \vU)=0, \qquad \inf_{\mathbb{R}_+\times\Omega}r>0, \\  
	&\vU:=\vV - \beta \vW, \qquad \text{ with } \quad\beta=\sqrt{\frac{\kappa}{1-\kappa}},\\
	&\vW= 2\sqrt{\kappa(1-\kappa)}  \,  \mu \nabla \log r.   
\end{split}
\end{align}
Then, using again the fact that $P'(r)r-P(r)=p(r)$, we can write
\begin{align*}
    &\f{1}{\ep^2} \int_0^{\tau}  \!\!\!\int_{\Om}\p_t P'(r)(r-\vr) \dx\ds \\
    &
    \quad
    =
    \f{1}{\ep^2} \int_0^{\tau}  \!\!\!\int_{\Om} P^{\prime \prime }(r) \partial_t r (r-\vr) \dx\ds \\
    &
    \quad
    =- \f{1}{\ep^2} \int_0^{\tau}  \!\!\!\int_{\Om} P^{\prime \prime }(r)  (r-\vr) \Div(r\vU) \dx\ds \\
    &
    \quad
    =- \f{1}{\ep^2} \int_0^{\tau}  \!\!\!\int_{\Om} P^{\prime \prime }(r)  (r-\vr) r \Div\vU \dx\ds - \f{1}{\ep^2} \int_0^{\tau}  \!\!\!\int_{\Om} P^{\prime \prime }(r)  (r-\vr) \nabla r  \cdot \vU \dx\ds \\
    &
    \quad
    =- \f{1}{\ep^2} \int_0^{\tau}  \!\!\!\int_{\Om} p^{\prime }(r)  (r-\vr)  \Div\vU \dx\ds- \f{1}{\ep^2} \int_0^{\tau}  \!\!\!\int_{\Om} \nabla P^{\prime  }(r)  (r-\vr) \cdot \vU \dx\ds.
\end{align*}
Hence, using $\displaystyle \vr \vu-r\vU= \vr \lr{\vu-\vU} + (\vr-r)\vU $, we can transform the $\ep^{-2}$-order terms on the right-hand side of \eqref{E:lem3.1} as follows:
{\small{
\begin{align*}
    & \f{1}{\ep^2} \int_0^{\tau}  \!\!\!\int_{\Om}\p_t P'(r)(r-\vr) \dx\ds \\
& 
- 
\f{1}{\ep^2} \int_0^{\tau}  \!\!\!\int_{\Om}\Grad P'(r) \cdot \left[ \vr \left( \vv-\sqrt{ \f{\ka}{1-\ka} } \vw  \right)-r\left( \vV-\sqrt{ \f{\ka}{1-\ka} } \vW   \right)         \right] \dx\ds  \\
& +\f{1}{\ep^2} \int_0^{\tau}  \!\!\!\int_{\Om}( p(r)-p(\vr)  ) \Div \left( \vV-\sqrt{ \f{\ka}{1-\ka} } \vW   \right)  \dx\ds + 2\f{\kappa \mu}{\ep^2} \int_0^{\tau}  \!\!\!\int_{\Om}\f{\vr}{r} p'(r) \Grad r \cdot \left( \f{\Grad r}{r}-\f{\Grad \vr}{\vr}  \right)           \dx\ds\\
& -\f{\ka}{\ep^2} \int_0^{\tau}  \!\!\!\int_{\Om}p'(\vr)\Grad \vr \cdot \left( 2\mu \f{\Grad r}{r} - \f{1}{ \sqrt{\ka(1-\ka)}  }\vW       \right) \dx\ds   + \f{1}{\ep^2} \int_0^{\tau} \int_{\Om}\vr \Grad G \cdot (\vv-\vV) \dx\ds\\
=&\f{1}{\ep^2} \int_0^{\tau}  \!\!\!\int_{\Om}\p_t P'(r)(r-\vr) \dx\ds-\f{1}{\ep^2} \int_0^{\tau}  \!\!\!\int_{\Om}\nabla P^\prime(r) (\vr-r) \cdot \vU  \dx\ds +\f{1}{\ep^2} \int_0^{\tau}  \!\!\!\int_{\Om}( p(r)-p(\vr)  ) \Div \vU  \dx\ds \\
&-\f{1}{\ep^2} \int_0^{\tau}  \!\!\!\int_{\Om}\Grad P'(r) \cdot \vr \lr{\vu-\vU}\dx\ds  
 -\f{1}{\ep^2} \int_0^{\tau}  \!\!\!\int_{\Om}\Grad P'(r) \cdot \vr \beta \lr{\vw-\vW}\dx\ds+ \f{1}{\ep^2} \int_0^{\tau} \int_{\Om}\vr \Grad G \cdot (\vv-\vV) \dx\ds\\
=&  -\f{1}{\ep^2} \int_0^{\tau}  \!\!\!\int_{\Om}\lr{ p(\vr)-p(r) -  p'(r)(\vr-r) } \Div \vU\dx\ds 
	+ \f{1}{\ep^2} \int_0^{\tau} \int_{\Om}\vr \lr{\Grad G- \Grad P^\prime(r)} \cdot (\vv-\vV) \dx\ds. 
\end{align*}     
}}
Consequently, under conditions \eqref{h1}, the relative energy inequality of Lemma \ref{rela-entr-ine} reduces to the following simpler expression:
\begin{align}
\nonumber	& \mathscr{E}( \vr,\vv,\vw| r,\vV,\vW)(\tau) -\mathscr{E}( \vr,\vv,\vw| r,\vV,\vW)(0) \\
\nonumber	& \quad \quad +   2 \kappa \mu \int_0^{\tau}  \!\!\!\int_{\Om}  \left|\oline{ \sqrt{\vr}  \, \ba(\vv-\vV) } \right|^2 \dx\ds + 
2\mu \int_0^{\tau}  \!\!\!\int_{\Om} \left| \oline{\sqrt{\vr} \, \bd \left( \sqrt{1-\kappa}(\vv-\vV)-\sqrt{\kappa}(\vw-\vW)   \right)} \right|^2   \dx\ds             \\
\nonumber	& \quad \quad + 2\f{\kappa \mu}{\ep^2} \int_0^{\tau}  \!\!\!\int_{\Om} \vr \Big( p'(\vr)\Grad \log \vr-p'(r)\Grad \log r  \Big) \cdot (\Grad \log \vr-\Grad \log r    )           \dx\ds \\ 
\nonumber	& \quad  \leq   \int_0^{\tau} \int_{\Om}\vr \left(  \vu -\vU  \right)\cdot \Grad \vW \cdot (\vW-\vw)\dx\ds 
	+\int_0^{\tau}  \!\!\!\int_{\Om}\vr  \left(  \vu -\vU  \right)  \cdot \Grad \vV \cdot (\vV-\vv)            \dx\ds\\
\nonumber	& \quad \quad + \int_0^{\tau}  \!\!\!\int_{\Om}\vr (\vW-\vw) \cdot\lr{ \p_t \vW+ \vU \cdot \nabla \vW} \dx \ds + \int_0^{\tau}  \!\!\!\int_{\Om} \vr  
 (\vV-\vv) \cdot
 \lr{ \p_t \vV+\vU \cdot \nabla \vV}   \dx\ds \\
\nonumber	& \quad \quad -\f{1}{\ep^2} \int_0^{\tau}  \!\!\!\int_{\Om}\lr{ p(\vr)-p(r) -  p'(r)(\vr-r) } \Div \vU\dx\ds \\
\nonumber	&  \quad \quad + \f{1}{\ep^2} \int_0^{\tau} \int_{\Om}\vr \lr{\Grad G- \Grad P^\prime(r)} \cdot (\vv-\vV) \dx\ds\\
\nonumber	& \quad \quad + 2\mu \int_0^{\tau}  \!\!\!\int_{\Om} \sqrt{\vr} \Big(  \bd(\sqrt{1-\kappa} \vV)- \bd( \sqrt{\ka} \vW)     \Big):
\oline{ \sqrt{\vr}\Big(  \bd(\sqrt{1-\kappa} (\vV-\vv))-\bd( \sqrt{\ka} (\vW-\vw))     \Big) }      \dx\ds \\
	&  \quad \quad + 2\ka \mu \int_0^{\tau}  \!\!\!\int_{\Om} \sqrt{\vr}  \, \ba  \vV: \oline{\sqrt{\vr}  \, \ba(\vV-\vv)} \dx\ds.
	\label{rel-ent-2}
\end{align}
To further reduce the left-hand side,  we use the following identity:
\begin{align}
\nonumber	&  \vr \Big( p'(\vr)\Grad \log \vr-p'(r)\Grad \log r \Big) \cdot (\Grad \log \vr-\Grad \log r    )       \\
	&\qquad = \vr  p'(\vr) \vert \Grad \log \vr-\Grad \log r \vert^2          +   \vr \Big( p'(\vr)-p'(r)\Big) \Grad \log r \cdot (\Grad \log \vr-\Grad \log r    ),
\label{eq:log-rho}
\end{align}
where the second term equals
\begin{align}
\nonumber &\vr \Big( p'(\vr)-p'(r)\Big) \Grad \log r \cdot (\Grad \log \vr-\Grad \log r    )      \\
	&\qquad = \nabla\big( p(\vr)-p(r) -  p'(r)(\vr-r) \big) \cdot \nabla \log r - \Big( \vr \big(p^\prime(\vr) - p^\prime(r)\big)- 
	p^{\prime \prime}(r) \ r \ (\vr-r) \Big) \vert \nabla \log r \vert^2. \label{eq:log-rho2}
\end{align}
Therefore, we deduce that
\begin{align}
\nonumber	& \mathscr{E}( \vr,\vv,\vw| r,\vV,\vW)(\tau) -\mathscr{E}( \vr,\vv,\vw| r,\vV,\vW)(0) \\
\nonumber	& \quad \quad +   2 \kappa \mu \int_0^{\tau}  \!\!\!\int_{\Om}  \left|\oline{ \sqrt{\vr}  \, \ba(\vv-\vV) } \right|^2 \dx\ds + 
2\mu \int_0^{\tau}  \!\!\!\int_{\Om} \left| \oline{\sqrt{\vr}   \, \bd \left( \sqrt{1-\kappa}(\vv-\vV)-\sqrt{\kappa}(\vw-\vW)   \right)} \right|^2   \dx\ds             \\
\nonumber	& \quad \quad + 2\f{\kappa \mu}{\ep^2} \int_0^{\tau}  \!\!\!\int_{\Om}\vr  p'(\vr) \vert \Grad \log \vr-\Grad \log r \vert^2             \dx\ds \\ 
\nonumber	& \quad  \leq   \int_0^{\tau} \int_{\Om}\vr \left(  \vu -\vU  \right)\cdot \Grad \vW \cdot (\vW-\vw)\dx\ds 
	+\int_0^{\tau}  \!\!\!\int_{\Om}\vr  \left(  \vu -\vU  \right)  \cdot \Grad \vV \cdot (\vV-\vv)            \dx\ds\\
\nonumber	& \quad \quad + \int_0^{\tau}  \!\!\!\int_{\Om}\vr (\vW-\vw) \cdot\lr{ \p_t \vW+ \vU \cdot \nabla \vW} \dx \ds + \int_0^{\tau}  \!\!\!\int_{\Om} \vr  
 (\vV-\vv) \cdot
 \lr{ \p_t \vV+\vU \cdot \nabla \vV}   \dx\ds \\
\nonumber	& \quad \quad -\f{1}{\ep^2} \int_0^{\tau}  \!\!\!\int_{\Om}\lr{ p(\vr)-p(r) -  p'(r)(\vr-r) } \Div \vU\dx\ds \\
\nonumber	&  \quad \quad + \f{1}{\ep^2} \int_0^{\tau} \int_{\Om}\vr \lr{\Grad G- \Grad P^\prime(r)} \cdot (\vv-\vV) \dx\ds\\
\nonumber	& \quad \quad + 2\mu \int_0^{\tau}  \!\!\!\int_{\Om} \sqrt{\vr} \Big(  \bd(\sqrt{1-\kappa} \vV)-\bd( \sqrt{\ka} \vW)     \Big):
\oline{ \sqrt{\vr}\Big(  \bd(\sqrt{1-\kappa} (\vV-\vv))-\bd( \sqrt{\ka} (\vW-\vw))     \Big) }      \dx\ds \\
	&  \quad \quad + 2\ka \mu \int_0^{\tau}  \!\!\!\int_{\Om} \sqrt{\vr}  \,\ba  \vV: \oline{\sqrt{\vr}  \, \ba(\vV-\vv)} \dx\ds \nonumber\\
\nonumber	& \quad \quad + 2\f{\kappa \mu}{\ep^2} \int_0^{\tau}  \!\!\!\int_{\Om}\big( p(\vr)-p(r) -  p'(r)(\vr-r) \big) \Delta \log r            \dx\ds \\ 
	& \quad \quad + 2\f{\kappa \mu}{\ep^2} \int_0^{\tau}  \!\!\!\int_{\Om}\Big( \vr \big(p^\prime(\vr) - p^\prime(r)\big)- 
	p^{\prime \prime}(r) \ r \ (\vr-r) \Big) \vert \nabla \log r \vert^2     \dx\ds.
	\label{rel-ent-2b}
\end{align}

\section{Uniform bounds based on the relative \texorpdfstring{$\k$}{}-entropy inequality}\label{Sec:unif}
In this section, we derive a series of uniform bounds for the family $\big\{(\vr_\veps, \vv_\veps, \vw_\veps)\big\}_{\veps>0}$
of $\k$-entropy solutions to the two-velocity system \eqref{dege-ns-aug}. They will be all deduced from the relative $\kappa$-entropy inequality
\eqref{rel-ent-2b} with a certain choice of the test functions $(r, \vV, \vW)$.

\subsection{Coercivity of the relative \texorpdfstring{$\k$}{}-entropy functional} \label{ss:coercivity}

We first explain the coercivity properties of the relative $\kappa$-entropy functional. These properties will be repeatedly used in the computations of Sections \ref{sec:well} and \ref{sec:ill}, when proving our main results.

Let $\big\{(\vr_\veps, \vv_\veps, \vw_\veps)\big\}_{\veps>0}$ be a family of weak $\k$-entropy solutions to the two-velocity system \eqref{dege-ns-aug} in the sense of
Definition \ref{def-1a}. Assume also that $(r, \vV, \vW)$ is a triplet of smooth test functions defined on some time interval $[0,T]$, with $T>0$.
We moreover require for $r$ to satisfy  $\underline{b}/2 \leq r(t,x) \leq 2\oline{b}$ for any $(t,x)\in[0,T]\times\Omega$, where the constants
$\underline{b}$ and $\oline b$ have been introduced in \eqref{const-b-2}; in the case $\Omega=\R^3$, further assume that
$r(t,x)\to\oline\vr$ for $|x|\to\infty$.

Recall that, for any $\tau \in [0,T]$, $\ep\in(0,1)$ fixed, the $\k$-relative energy functional is defined by the formula
\begin{align*}
 \mathscr{E}_\ep(\tau)=\mathscr{E}( \vr_\ep,\vv_\ep,\vw_\ep| r,\vV,\vW)(\tau):= 
 & \f{1}{2} \int_{\Om}  \vr_\ep \Big(|\vv_\ep-\vV|^2+|\vw_\ep-\vW|^2  \Big) (\tau)    \dx  \nonumber \\
& \quad +\f{1}{\ep^2} \int_{\Om}   \Big( P(\vr_\ep)-P(r)-P'(r)(\vr_\ep-r)  \Big)  (\tau)                      \dx,
\end{align*}
with the convention that, when $\t=0$, the triplet $(\vr_\ep,\vv_\ep,\vw_\ep)$ is replaced with the initial datum $(\vr_{0,\ep},\vv_{0,\ep},\vw_{0,\ep})$.

Then, we have
\begin{align*} 
    \mathscr{E}_\ep(\tau) \geq C \begin{cases}
        &\displaystyle \int_{\Om}\vr_\ep   \Big(|\vv_\ep-\vV|^2+|\vw_{\ep}-\vW|^2  \Big)     \dx  + \int_{\Om}   \lr{\f{\vr_\ep-r}{\ep}}^2         \dx, \quad \vr_\ep \in \left[ \underline{b}/2, 2\overline{b}\right],  \\[3ex] 
         &\displaystyle\int_{\Om}  \vr_\ep \Big(|\vv_\ep-\vV|^2+|\vw_\ep-\vW|^2  \Big)     \dx  +\f{1}{\ep^2} \int_{\Om}   \lr{1+\vr_\ep^\gamma} \dx ,\quad  \text{ otherwise},
    \end{cases}
\end{align*}
where $C$ depends only on $r$, more precisely on the value of the constants $\underline{b}$ and $\oline b$.

Moreover, since for $\beta = \beta(\k) = \sqrt{\k/1-\k}$ we have 
    \begin{align} \label{eq:u-v-w}
    \vu_\ep= \vv_\ep-\beta \vw_\ep \qquad \text{ and } \qquad \vU= \vV- \beta \vW,
    \end{align} 
there exists a constant $C(\beta)>0$ such that 
    \begin{align}\label{est-k-1}
         \vr_\ep |\vu_\ep -\vU |^2 \leq  C(\beta)\; \vr_\ep \Big(|\vv_\ep-\vV|^2+|\vw_\ep-\vW|^2  \Big) .
    \end{align}

Following a nowadays classical approach (see \tsl{e.g.} book \cite{FN09}), we introduce the essential and residual parts of a given function. 
Let us fix a smooth $\psi \in C_c^\infty(0,\infty)$, with $0\leq \psi\leq 1$ and $\psi(r)=1$ for all $r\in \left[ \underline{b}/2, 2\overline{b}\right]$.
Then, for any measurable function $h$, we define 
\begin{align*}  
	h=[h]_{\text{ess}} + [h]_{\text{res}},\qquad \mbox{ with } \qquad  [h]_{\text{ess}} =\psi(\vr_\ep) h,\quad [h]_{\text{res}}= (1- \psi(\vr_\ep)) h.
\end{align*}
Therefore,  for any $\t\in[0,T]$ and any $\ep\in (0,1)$ we have:
\begin{align}
\nonumber & 
 \left\| \left[ \f{\vr_{\ep} -r }{\ep} \right]_{\rm ess} (\t,\cdot)  \right\|^2_{L^2(\Om)} \leq \mathscr{E}_\ep(\tau),  \\
 & \int_{\Om} 
 \left[  1+\vr_{\ep}^{\gamma} \right]_{\rm  res} (\t,\cdot) \dx \leq C \ep^2 \ \mathscr{E}_\ep(\tau), \label{est:coerc-E} \\
\nonumber & 
 \left\| \sqrt{ \vr_{\ep}} \Big(\vu_{\ep} - \vU\Big) (\t,\cdot)  \right\|^2_{L^2(\Om)}+ \left\| \sqrt{ \vr_{\ep}} \Big(\vv_{\ep} - \vV\Big) (\t,\cdot)  \right\|^2_{L^2(\Om)} + 
 \left\| \sqrt{ \vr_{\ep}} \Big(\vw_{\ep} - \vW\Big) (\t,\cdot)  \right\|^2_{L^2(\Om)} \leq C \ \mathscr{E}_\ep(\tau).  
\end{align}

\subsection{Uniform estimates}\label{SSec:uni}
In this subsection we derive more refined estimates that  will be  needed in the course of our study, in Sections \ref{sec:well} and \ref{sec:ill} below.  
To this end, we consider $ (r, \vV, \vW) = (b, \vc{0}, \vc{0}) $ that satisfy the relations in \eqref{h1} and hence can be used as a test function in \eqref{rel-ent-2b}. 
Note that the cases $\Omega=\T^3$ and $\Omega=\R^3$ can be considered simultaneously, and the difference happens only at the end, when convergences are deduced.
However, keep in mind that, although we use the same notation $b$ for both domains, when  $\Omega=\R^3$ the profile $b$ is assumed not only to solve \eqref{const-b-1_bis} and \eqref{const-b-2}, but also to satisfy the far-field condition $b(x) \to \oline{\vr}$ as $|x| \to \infty$, for some constant value $\oline\vr > 0$. Therefore, $\displaystyle \Ov{b} = \sup_{x \in \Om} b(x)$ and $\displaystyle \underline{b} = \inf_{x \in \Om} b(x)$ may differ depending on the domain.

To begin, let us notice that, from equation \eqref{eq:stat-b} it follows that
\begin{equation} \label{eq:b-second}
 \nabla P'(b) = \nabla G\, \quad \mbox{for}\ \  P(b) = \frac{a}{\g-1} b^{\g}.
\end{equation}
Thus, we can immediately reduce \eqref{rel-ent-2b} with $ (r, \vV, \vW) = (b, \vc{0}, \vc{0}) $ into the following form:
\begin{align}\label{kapp-eng-2c}
	\begin{split}
		& \mathscr{E}( \vr_\ep,\vv_\ep,\vw_\ep| b,\vc{0},\vc{0})(\tau)  -\mathscr{E}( \vr_{0,\ep},\vv_{0,\ep},\vw_{0,\ep}| b,\vc{0},\vc{0})  \\
		& \quad \quad +   2 \kappa \mu \int_0^{\tau}  \!\!\!\int_{\Om}  \left|\oline{\sqrt{\vr_\ep}  \,\ba(\vv_\ep)}  \right|^2 \dx\ds + 
		2\mu \int_0^{\tau}  \!\!\!\int_{\Om} \left|\oline{ \sqrt{\vr_\ep} \, \bd \left( \sqrt{1-\kappa}\vv_\ep-\sqrt{\kappa}\vw_{\ep_\ep}  \right) }  \right|^2 \dx\ds   \\
		& \quad \quad + 2\f{\kappa \mu}{\ep^2} \int_0^{\tau}  \!\!\!\int_{\Om}  \vr_\ep  p'(\vr_\ep) \vert \Grad \log \vr_\ep-\Grad \log b \vert^2      \dx\ds \\ 
		& \quad  \leq  2\f{\kappa \mu}{\ep^2} \int_0^{\tau}  \!\!\!\int_{\Om}  \Big( p(\vr_\ep)-p(b) -  p'(b)(\vr_\ep-b) \Big) \cdot \Delta \log b    \dx\ds \\ 
		&\quad \quad + 2\f{\kappa \mu}{\ep^2} \int_0^{\tau}  \!\!\!\int_{\Om}   \Big( \vr_\ep \big(p^\prime(\vr_\ep) - p^\prime(b)\big)- p^{\prime \prime}(b) \ b \ (\vr_\ep-b) \Big)
		\vert \nabla \log b \vert^2     \dx\ds. \\ 
	\end{split}
\end{align}

For our choice of the pressure function, we can use Lemma 2.2 of \cite{BN16}, which states
(after a separation into essential and residual sets) that
\begin{equation} \label{est:press-remainder}
 \Big( \vr_{\ep} \big(p^\prime(\vr_{\ep}) - p^\prime(b)\big)-
		p^{\prime \prime}(b) \ b \ (\vr_{\ep}-b) \Big) \vert \nabla \log b \vert^2  \approx
 \Big( P(\vr_{\ep})-P(b) -  P'(b)(\vr_{\ep}-b) \Big),
\end{equation}
where we have denoted $f\approx g$ if there is a constant $C>0$, independent of $\ep$, such that $\frac{1}{C} f \leq g\leq Cf$.
Therefore, we can estimate
\begin{align}\label{est-k-2a}
	\begin{split}
		&2\f{\kappa \mu}{\ep^2} \int_0^{\tau}  \!\!\!\int_{\Om}  \Big( p(\vr_{\ep})-p(b) -  p'(b)(\vr_{\ep}-b) \Big) \cdot \Delta \log b    \dx\ds \\ 
		&\qquad 
        + 2\f{\kappa \mu}{\ep^2} \int_0^{\tau}  \!\!\!\int_{\Om}   \Big( \vr_{\ep} \big(p^\prime(\vr_{\ep}) - p^\prime(b)\big)-
		p^{\prime \prime}(b) \ b \ (\vr_{\ep}-b) \Big) \vert \nabla \log b \vert^2     \dx\ds \\
		& \quad 
        \leq  C
		 \frac{1}{\ep^2}  \int_0^{\tau}  \!\!\!\int_{\Om}  \Big( P(\vr_{\ep})-P(b) -  P'(b)(\vr_{\ep}-b) \Big) \dx \ds\\
         & \quad 
         \leq C \int_0^\tau  \mathscr{E}( \vr_{\ep},\vv_{\ep},\vw_{\ep}| b,\vc{0},\vc{0})(s) \ds,
		\end{split}
	\end{align}
thus \eqref{kapp-eng-2c} implies
\begin{align*}
	\begin{split}
		& \mathscr{E}( \vr_{\ep},\vv_{\ep},\vw_{\ep}| b, \vc{0},\vc{0})(\tau)   \\
		& \quad \quad +   2 \kappa \mu \int_0^{\tau}  \!\!\!\int_{\Om}  \left| \oline{\sqrt{\vr_\veps}  \, \ba(\vv_{\ep})}  \right|^2 \dx\ds + 
		2\mu \int_0^{\tau}  \!\!\!\int_{\Om} \left| \oline{ \sqrt{\vr_{\ep}} \, \bd \left( \sqrt{1-\kappa}\vv_{\ep}-\sqrt{\kappa}\vw_{\ep}   \right)} \right|^2  \dx\ds       \\
		& \quad \quad + 2\f{\kappa \mu}{\ep^2} \int_0^{\tau}  \!\!\!\int_{\Om}  \vr_{\ep}  p'(\vr_{\ep}) \vert \Grad \log \vr_{\ep}-\Grad \log b \vert^2      \dx\ds \\ 
		& \quad  \leq \mathscr{E}( \vr_{0,\ep},\vv_{0,\ep},\vw_{0,\ep}| b,\vc{0},\vc{0}) + C \int_0^\tau  \mathscr{E}( \vr_{\ep},\vv_{\ep},\vw_{\ep}| b,\vc{0},\vc{0})(s) \ds . 
	\end{split}
\end{align*}
From the assumptions on initial data, we have 
\begin{align*}
	 \mathscr{E}( \vr_{0, \ep},\vv_{0,\ep},\vw_{0,\ep}| b,\vc{0},\vc{0})\leq C,
\end{align*} 
where $ C>0 $ is independent of $ \ep $. Therefore, for any $T>0$ fixed, the bound
\begin{align} \label{id-hyp2}
\mathscr{E}( \vr_{\ep},\vv_{\ep},\vw_{\ep}| b,\vc{0},\vc{0})(\tau) \leq C\exp(C T) 
\quad \text{ for a.e. } \tau \in (0,T)
\end{align}
is uniform with respect to $\ep.$ In particular, we deduce the following uniform bounds 
\eq{\label{bound:vw}
 \sup_{t\in (0,T)} \| \sqrt{ \vr_{\ep}} \vv_{\ep} (t,\cdot)  \|_{L^2(\Om)} +  \sup_{t\in (0,T)} \| \sqrt{ \vr_{\ep}} \vw_{\ep} (t,\cdot)  \|_{L^2(\Om)} \leq C.
}

Moreover,  as in \eqref{est:coerc-E}, we can now deduce from \eqref{id-hyp2} the following  uniform bounds on $\big\{(\vr_\veps, \vu_\veps)\big\}_{\veps>0}$:
\eq{\label{bound:rhou}
&  \sup_{t\in (0,T)}
 \left\| \left[ \f{\vr_{\ep} -b }{\ep} \right]_{\rm ess} (t,\cdot)  \right\|_{L^2(\Om)} \leq C, 
 \quad 
 \sup_{t\in (0,T)}
 \int_{\Om} 
 \left[  1+\vr_{\ep}^{\gamma} \right]_{\rm  res} (t,\cdot) \dx \leq C \ep^2, \\
 &  \sup_{t\in (0,T)}
 \| \sqrt{ \vr_{\ep}} \vu_{\ep} (t,\cdot)  \|_{L^2(\Om)} \leq C, 
}
for some $C>0$ independent of $\ep$. By the decomposition $\vr_\ep\vu_\ep = \sqrt{\vr_\ep} \sqrt{\vr_\ep}\vu_\ep$, we also see that
\[
\sup_{t\in (0,T)} \| \vr_{\ep} \vu_{\ep} (t,\cdot)  \|_{L^q(\Om)} \leq C
 \qquad \text{ with }\quad  q =  \f{ 2 \min\{2,\gamma  \}    }{  \min\{2,\gamma  \} +1   } \in (1,2). 
\]

As a direct consequence of the uniform estimates above, up to an omitted extraction of subsequence
we deduce (see \tsl{e.g.} \cite{FZ23} for details) that, for $\Omega=\T^3$, we have
\begin{align} \label{eq:conv_prop}
\begin{split}
& \vr_{\ep} \rightarrow b \qquad \text{ strongly in } \quad
L^{\infty}\big(0,T; L^{ \min\{2,\gamma\} }(\Om)  \big), \\
& \sqrt{\vr_\ep}\vu_\ep \stackrel{*}{\rightharpoonup} \wtilde{\vu} \qquad \text{ weakly-$*$ in } \quad L^\infty\big(0,T; L^2(\Omega;\R^3)\big), \\
& \vr_\ep \vu_\ep \stackrel{*}{\rightharpoonup} \vm = \sqrt{b}\wtilde{\vu} \qquad \text{ weakly-$*$ in } \quad L^\infty\big(0,T; L^q(\Omega;\R^3)\big),
\end{split}
\end{align}
while for $\Omega=\R^3$ we have instead
\begin{align} \label{eq:conv_prop:ill-2}
\begin{split}
& \vr_{\ep} \rightarrow b \qquad \text{strongly in } \quad
L^{\infty}\big(0,T; L^2 + L^{\gamma} (\Om)  \big), \\
& \sqrt{\vr_\ep}\vu_\ep \stackrel{*}{\rightharpoonup} \wtilde{\vu} \qquad \text{weakly-$*$ in } \quad L^\infty\big(0,T; L^2(\Omega;\R^3)\big), \\
& \vr_\ep \vu_\ep \stackrel{*}{\rightharpoonup} \vm = \sqrt{b}\wtilde{\vu} \qquad \text{weakly-$*$ in } \quad L^\infty\big(0,T; L^2 + L^{\frac{2\gamma}{\gamma+1}}(\Omega;\R^3)\big).
\end{split}
\end{align}
From now on, we omit to specify that weak convergence properties are verified up to an extraction of a suitable subsequence.
By using the convergence properties from \eqref{eq:conv_prop}, it is a routine matter to pass to the limit in the weak form of the conservation of mass, that is in relation \eqref{equ-conti}, and find that
\begin{equation} \label{eq:constr-well}
\Div \, \vm = \Div\big( \sqrt{b} \wtilde\vu\big) = 0\qquad\qquad \mbox{ in the sense of distributions on }\quad (0,T)\times\Omega.
\end{equation}

From \eqref{bound:vw} we may also deduce that there exist two vector fields $\wtilde \vv$ and $\wtilde \vw$ belonging to $L^\infty\big(0,T;L^2(\Omega)\big)$ such that
\eq{\label{vw:conv}
 \sqrt{\vr_\ep}\vv_\ep \stackrel{*}{\rightharpoonup} \wtilde\vv \qquad \mbox{ and }\qquad \sqrt{\vr_\ep}\vw_\ep \stackrel{*}{\rightharpoonup} \wtilde\vw
}
in the weak-$*$ topology of that space. 
Now, due to the equality $\sqrt{\vr_\ep}\vw_\ep = 4\mu\sqrt{\k(1-\k)} \Grad\sqrt{\vr_\ep}$,
and to the first convergence property in \eqref{eq:conv_prop}, we find that
\eq{\label{vw:conv2}
 \wtilde\vw = 4 \mu \sqrt{\k (1-\k)}   \, \nabla \sqrt{b},
}
see also Remark \ref{r:w}.

\section{Singular limit for well-prepared initial data}\label{sec:well}

In this section, we give the proof of Theorem \ref{TH1}, devoted to the convergence result in the case of well-prepared initial data.
In particular, throughout this section we will work in the domain
\begin{align*}
\Omega = \T^3.
\end{align*}
Having derived a series of uniform bounds for the family $\big\{(\vr_\veps, \vv_\veps, \vw_\veps)\big\}_{\veps>0}$ in the previous section, we see that we are still lacking information about strong convergence of any of the velocities postulated in \eqref{conv:well}. 
This will be done by employing the relative entropy inequality of Section \ref{sec:REI} in a suitable way.

\subsection{Relative entropy inequality for well-prepared data}\label{subs:well}
To proceed, we take the test function $ (r, \vV, \vW) $ in the relative entropy inequality \eqref{rel-ent-2b} to be equal to $ (b, \vV,\vW) $,
where $ (b, \vU=\vV-\beta \vW) $ given by Theorem \ref{thm:LWP} solves the target system \eqref{ane-app}. Therefore, the following identities hold true:
\eq{\label{target_new}
	&\partial_t b+ \Div(b \vU)=0,\qquad  \vW= 2\mu \sqrt{\kappa(1-\kappa)} \nabla \log b,\\
	&  \p_t \vV + \vU \cdot \Grad \vV +\Grad \Pi = b^{-1}\left[
		\mu \Div (2 b (1-\kappa)\bd \vV ) + \mu \Div (2\kappa b \ba \vV)      -\mu \Div \Big(   2\sqrt{\kappa (1-\kappa)} b \Grad \vW  \Big)      \right],\\
	&  \vU \cdot \Grad \vW =b^{-1} \left[ \mu \Div (2\kappa b \Grad \vW)
	-\mu \Div \Big( 2\sqrt{\kappa (1-\kappa)}   b \Grad^{t}\vV \Big) \right].
}

We see that the relative  entropy inequality  \eqref{rel-ent-2b} applied to the new test functions yields
\begin{align}
\nonumber	& \mathscr{E}( \vr_\ep,\vv_\ep,\vw_\ep| b,\vV,\vW)(\tau)-\mathscr{E}( \vr_{0,\ep},\vv_{0,\ep},\vw_{0,\ep}| b,\vV_0,\vW_0)    \\
\nonumber	& \quad \quad +   2 \kappa \mu \int_0^{\tau}  \!\!\!\int_{\Om}  \left|\oline{ \sqrt{\vr_\ep}  \, \ba(\vv_\ep-\vV) } \right|^2 \dx\ds + 
2\mu \int_0^{\tau}  \!\!\!\int_{\Om} \left| \oline{\sqrt{\vr_\ep}  \, \bd \left( \sqrt{1-\kappa}(\vv_\ep-\vV)-\sqrt{\kappa}(\vw_\ep-\vW)   \right)} \right|^2   \dx\ds             \\
\nonumber	& \quad \quad 
+ 2\f{\kappa \mu}{\ep^2} \int_0^{\tau}  \!\!\!\int_{\Om}  \vr_{\ep}  p'(\vr_{\ep}) \vert \Grad \log \vr_{\ep}-\Grad \log b \vert^2       \dx\ds \\ 
\nonumber	& \quad  \leq \int_0^{\tau} \int_{\Om}\vr_\ep \left(  \vu_\ep -\vU  \right)\cdot \Grad \vW \cdot (\vW-\vw_\ep)\dx\ds 
	+\int_0^{\tau}  \!\!\!\int_{\Om}\vr_\ep  \left(  \vu_\ep -\vU  \right)  \cdot \Grad \vV \cdot (\vV-\vv_\ep)            \dx\ds\\
\nonumber	& \quad \quad + \int_0^{\tau}  \!\!\!\int_{\Om}\vr_\ep (\vW-\vw_\ep) \cdot\lr{ \p_t \vW+ \vU \cdot \nabla \vW} \dx \ds + \int_0^{\tau}  \!\!\!\int_{\Om} \vr_\ep  
 (\vV-\vv_\ep) \cdot
 \lr{ \p_t \vV+\vU \cdot \nabla \vV}   \dx\ds \\
\nonumber	& \quad \quad -\f{1}{\ep^2} \int_0^{\tau}  \!\!\!\int_{\Om}\Big( p(\vr_\ep)-p(b) -  p'(b)(\vr_\ep-b) \Big) \Div \vU\dx\ds \\
\nonumber	&  \quad \quad + \f{1}{\ep^2} \int_0^{\tau} \int_{\Om}\vr_\ep \Big(\Grad G- \Grad P^\prime(b)\Big) \cdot (\vv_\ep-\vV) \dx\ds\\
\nonumber	& \quad \quad + 2\mu \int_0^{\tau}  \!\!\!\int_{\Om} \sqrt{\vr_\ep} \Big(  \bd(\sqrt{1-\kappa} \vV)-\bd( \sqrt{\ka} \vW)     \Big):
\oline{ \sqrt{\vr_\ep}\Big(  \bd(\sqrt{1-\kappa} (\vV-\vv_\ep))-\bd( \sqrt{\ka} (\vW-\vw_\ep))     \Big) }      \dx\ds \\
\nonumber	&  \quad \quad + 2\ka \mu \int_0^{\tau}  \!\!\!\int_{\Om} \sqrt{\vr_\ep}  \, \ba  \vV: \oline{\sqrt{\vr_\ep}  \, \ba(\vV-\vv_{\ep})} \dx\ds\\
	\nonumber	& \quad \quad + 2\f{\kappa \mu}{\ep^2} \int_0^{\tau}  \!\!\!\int_{\Om} 
    \Big( p(\vre)-p(b) -  p'(b)(\vre-b) \Big) \Delta \log b            \dx\ds \\ 
\nonumber	& \quad \quad + 2\f{\kappa \mu}{\ep^2} \int_0^{\tau}  \!\!\!\int_{\Om}\Big( \vre \big(p^\prime(\vre) - p^\prime(b)\big)- 
	p^{\prime \prime}(b) \ b \ (\vre-b) \Big) \vert \nabla \log b \vert^2     \dx\ds\\
	&\quad =\sum_{j=1}^{11} R_j.
	\label{rel-ent-2bis}
\end{align}

Our next goal is to bound all the terms $R_j$. In the estimates below, we will make some multiplicative constants appear, which depend on suitable
functional norms of the functions $\vV$ and $\vW$. Of course, we tacitly understand that those quantities can be expressed
in terms of the target profiles $\vU$ and $b$, solving the target system \eqref{ane-app}.

First of all, note that some terms, like the two last ones for instance,
can be treated exactly as in \eqref{est-k-2a}. More precisely, it is easy to check that 
\[
\left|R_1\right| + \left| R_2 \right| + \left| R_5 \right| + \left| R_{10}\right|+\left|R_{11} \right| \leq C \int^\t_0\mathscr{E}( \vr_\ep,\vv_\ep,\vw_\ep| b,\vV,\vW)(s)\,\ds\,,
\]
for a universal constant $C>0$ depending only on $\g$ and on the $L^\infty\big( (0,T)\times\Omega\big)$ norms of $\Grad \vV$ and $\Grad\vW$.
Moreover, due to \eqref{eq:b-second}, we have $R_6\equiv0$.

The terms $R_7$ and $R_8$ will cancel out other terms later on, so we only need to deal with $R_3$ and $R_4$.
To begin with, we deduce from \eqref{target_new} that
\eq{\label{prob_Vt}
& \int_0^{\tau}  \!\!\!\int_{\Om} \vr_{\ep} (\vV-\vv_{\ep}) \cdot\lr{ \p_t \vV+\vU \cdot \nabla \vV}   \dx\ds \\
&= - \int_0^{\tau}  \!\!\!\int_{\Om}	\vr_{\ep} (\vV-\vv_{\ep})    \cdot  \Grad \Pi     \dx\ds \\
&\quad + \int_0^{\tau}  \!\!\!\int_{\Om} \frac{\vr_{\ep}}{b} (\vV-\vv_{\ep})\cdot \Big[ 
 		\mu \Div (2 b (1-\kappa)\bd \vV ) + \mu \Div (2\kappa b \ba \vV)
 -\mu \Div \Big(   2\sqrt{\kappa (1-\kappa)} b \Grad \vW        \Big) \Big]    \dx\ds 
}
and, as $\partial_t\vW=0$, that
\eq{\label{prob_Wt}
	&	\int_0^{\tau}  \!\!\!\int_{\Om}\vr_{\ep} (\vW-\vw_{\ep}) \cdot\lr{ \p_t \vW+ \vU \cdot \nabla \vW} \dx \ds \\
	&= \int_0^{\tau}  \!\!\!\int_{\Om}\frac{\vr_{\ep}}{b} (\vW-\vw_{\ep}) \cdot \left[ \mu \Div (2\kappa b \Grad \vW)
	-\mu \Div \Big( 2\sqrt{\kappa (1-\kappa)}   b \Grad^{t}\vV \Big) \right]  \dx \ds \\
}
At this point, one would like to integrate by parts the terms appearing in the last lines of \eqref{prob_Vt} and \eqref{prob_Wt}. However, this is not possible,
as we lack of information on $\vv_\ep-\vV$ to give sense to the term $\nabla(\frac{\vr_\ep}{b}) \ (\vv_\ep-\vV)$ (and analogous problem for the $\vw_\ep-\vW$ term).
Therefore, we have to argue in a different way. Namely, we sum up equalities \eqref{prob_Vt} and \eqref{prob_Wt} to get
\begin{align*}
	& \int_0^{\tau}\!\!\!\int_{\Om}  \vr_{\ep} (\vV -\vv_{\ep}) \cdot \lr{ \p_t \vV+\vU \cdot \nabla \vV}   \dx\ds + 
	\int_0^{\tau} \!\!\!\int_{\Om} \vr_{\ep} (\vW -\vw_{\ep}) \cdot \lr{ \p_t \vW+ \vU \cdot \nabla \vW} \dx \ds \\
	& = \int_0^{\tau}\!\!\!\int_{\Om}  \vr_{\ep} (\vv_{\ep}-\vV) \cdot \Grad \Pi \dx\ds  \\
	&\quad +\int_0^{\tau}\!\!\!\int_{\Om}   \vr_{\ep} (\vv_{\ep}-\vV) \cdot b^{-1} \Big[ 
	\mu \Div (2 b (1-\kappa)\bd \vV ) + \mu \Div (2\kappa b \ba \vV) 	     -\mu \Div \Big(   2\sqrt{\kappa (1-\kappa)} b \Grad \vW        \Big) \Big]    \dx\ds \\
	&\quad +\int_0^{\tau}\!\!\! \int_{\Om} \vr_{\ep} (\vw_{\ep}-\vW)\cdot b^{-1} \left[ \mu \Div (2\kappa b \Grad \vW)
	-\mu \Div \Big( 2\sqrt{\kappa (1-\kappa)}   b \Grad^{t}\vV \Big) \right]  \dx \ds .
\end{align*}
Let us focus on the last two terms. Taking advantage of the usual relations between $\vu_\ep, \vv_{\ep}, \vw_\ep $ and $\vU, \vV, \vW$, we can write
\begin{align}\label{v-w-eqn}
	\nonumber & \int_0^{\tau}\!\!\!\int_{\Om}   \vr_{\ep} (\vv_{\ep}-\vV) \cdot b^{-1} \Big[ 
	\mu \Div (2 b (1-\kappa)\bd \vV ) + \mu \Div (2\kappa b \ba \vV) 	     -\mu \Div \Big(   2\sqrt{\kappa (1-\kappa)} b \Grad \vW        \Big) \Big]    \dx\ds \\
\nonumber	&\qquad+ \int_0^{\tau} \!\!\!\int_{\Om} \vr_{\ep} (\vw_{\ep}-\vW) \cdot b^{-1} \left[ \mu \Div (2\kappa b \Grad \vW)
	-\mu \Div \Big( 2\sqrt{\kappa (1-\kappa)}   b \Grad^{t}\vV \Big) \right]  \dx \ds \\
\nonumber	& \quad 
    =
    \int_0^{\tau}\!\!\!\int_{\Om}   \vr_{\ep} (\vu_{\ep}-\vU) \cdot b^{-1} \Big[ 
	\mu \Div (2 b (1-\kappa)\bd \vV ) 	     -\mu \Div \Big(   2\sqrt{\kappa (1-\kappa)} b \Grad \vW        \Big) \Big]    \dx\ds  \\ 
\nonumber	&\qquad+\int_0^{\tau}\!\!\! \int_{\Om} \vr_{\ep} (\vv_{\ep}-\vV) \cdot b^{-1}  \mu \Div (2\kappa b \ba \vV) \dx\dt   \\
\nonumber	&\qquad + \int_0^{\tau} \!\!\!\int_{\Om} \vr_{\ep} (\vw_{\ep}-\vW) \cdot b^{-1} \left[ \mu \Div \Big( 2\sqrt{\kappa (1-\kappa)}   b \ba \vV \Big) \right]  \dx \ds \\
\nonumber	&\quad 
    =
    \int_0^{\tau}\!\!\!\int_{\Om}   \vr_{\ep} (\vu_{\ep}-\vU)  \cdot \Big[ 
	\mu \Div (2  (1-\kappa)\bd \vV ) 	     -\mu \Div \Big(   2\sqrt{\kappa (1-\kappa)}  \Grad \vW        \Big) \Big]    \dx\ds  \\ 
 \nonumber   &\qquad+\int_0^{\tau} \!\!\!\int_{\Om} \vr_{\ep} (\vv_{\ep}-\vV) \cdot   \mu \Div (2\kappa  \ba \vV) \dx\dt  \\
  \nonumber  &\qquad + \int_0^{\tau}\!\!\! \int_{\Om} \vr_{\ep} (\vw_{\ep}-\vW) \cdot  \left[ \mu \Div \Big( 2\sqrt{\kappa (1-\kappa)}    \ba \vV \Big) \right]  \dx \ds \\
	\nonumber&\qquad+ \int_0^{\tau}\!\!\!\int_{\Om}  \nabla \log b \otimes[ \vr_{\ep} (\vu_{\ep}-\vU)] :\Big[ 
	2\mu   (1-\kappa)\bd \vV  	     -2\mu \Big(   \sqrt{\kappa (1-\kappa)}  \Grad \vW        \Big) \Big]    \dx\ds  \\ 
	\nonumber &\qquad  +\int_0^{\tau} \!\!\!\int_{\Om}\nabla \log b\otimes [\vr_{\ep} (\vv_{\ep}-\vV)] : \mu  (2\mu\kappa  \ba \vV) \dx\dt   \\
\nonumber	&\qquad +  \int_0^{\tau} \!\!\!\int_{\Om} \nabla \log b \otimes[\vr_{\ep} (\vw_{\ep}-\vW)] :\left[ \mu  \Big( 2\sqrt{\kappa (1-\kappa)}    \ba \vV \Big) \right]  \dx \ds \\
	& \quad
    =
    \sum_{i=1}^{6} Z_i .
\end{align}
Now, we will consider the first three terms only, namely $Z_1$, $Z_2$ and $Z_3$.
To treat $Z_1$, we notice that
\begin{align}\label{z-rel}
	&\mu \Div (2  (1-\kappa)\bd \vV ) 	     -\mu \Div \Big(   2\sqrt{\kappa (1-\kappa)}  \Grad \vW        \Big) 
	=\mu (1-\kappa) \lr{\Delta \vV+ \nabla\Div \vV} -  2\mu \sqrt{\kappa (1-\kappa)}\Delta \vW. 
\end{align}
Therefore, applying formula \eqref{weak-id1} yields that
\begin{align}\label{z1}
\begin{split}
Z_1& =
-4(1-\kappa)\mu \int_0^{\tau} \!\!\!\int_{\Om}\lr{\nabla \sqrt{\vr_\ep } \otimes \sqrt{\vr_\ep}(\vu_\ep-\vU)} : \bd\vV \dx\ds-2\mu(1-\kappa)\langle\sqrt{\vr_\ep}\oline{\sqrt{\vr_\ep}\bd(\vu_\ep-\vU)},\bd\vV\rangle\\
& \quad 
+4\sqrt{\kappa(1-\kappa)}\mu \int_0^{\tau} \!\!\!\int_{\Om}\lr{\nabla \sqrt{\vr_\ep } \otimes \sqrt{\vr_\ep}(\vu_\ep-\vU)} : \bd\vW \dx\ds \\
& \quad 
+ 2\mu\sqrt{\kappa(1-\kappa)}\langle\sqrt{\vr_\ep}\oline{\sqrt{\vr_\ep}\bd(\vu_\ep-\vU)},\bd\vW\rangle\\
& =
-2\mu\int_0^{\tau}\!\!\!\int_{\Om}   \Grad\log\vr_\ep\otimes\vr_{\ep} (\vu_{\ep}-\vU):  \Big[ 
	 (1-\kappa)\bd \vV 	     -   \sqrt{\kappa (1-\kappa)}  \bd \vW         \Big]    \dx\ds \\
     & \quad 
     - 2 \mu \int_0^{\tau} \!\!\!\int_{\Om}  \Ov{ \sqrt{\vr_{\ep}} \Big(  \bd(\sqrt{1-\kappa} (\vV-\vv_{\ep}))-\sqrt{\ka}\bd  (\vW-\vw_{\ep})     \Big)   } :
     \sqrt{\vr_{\ep}} \Big(  \bd(\sqrt{1-\kappa} \vV)-\sqrt{\ka}\bd \vW     \Big)   \dx\ds\\
     &
     =: 
Z_{1,a}+Z_{1,b}  ,    
\end{split}
\end{align}
where we have made use of the identity $\oline{\sqrt{\vr_\ep} \,\bd \vw_\ep}=\oline{\sqrt{\vr_\ep} \, \Grad\vw_\ep}$ (keep in mind the constraint \eqref{def2:prop}).

For $Z_2$, after noticing that $\Div(\ba\vV)=\frac12(\Delta\vV-\Grad\Div\vV)$, we can use formula \eqref{weak-id2} for $(\vv_\ep-\vV)$ in place of $\vu_\ep$ to obtain
\eq{\label{z2}
Z_2& =
-4\mu \kappa\int_0^{\tau} \!\!\!\int_{\Om}\lr{\nabla \sqrt{\vr_\ep } \otimes \sqrt{\vr_\ep}(\vv_\ep-\vV)} : \ba\vV \dx\ds-2\mu\kappa\langle\sqrt{\vr_\ep}
\oline{\sqrt{\vr_\ep}\ba(\vv_\ep-\vV)},\ba\vV\rangle\\
&=
-2\mu\kappa\int_0^{\tau} \!\!\!\int_{\Om}\lr{\nabla \log\vr_\ep  \otimes {\vr_\ep}(\vv_\ep-\vV)} : \ba\vV \dx\ds-2\mu\kappa\int_0^{\tau} \!\!\!\int_{\Om}\sqrt{\vr}\Ov{\sqrt{\vr}\ba(\vv_\ep-\vV)}:\ba\vV \dx\ds\\
& =: 
Z_{2,a}+ Z_{2,b} . 
}
The last term $Z_3$ is treated in the same way as $Z_1$: 
\begin{align}\label{z3}
\begin{split}
Z_3  & =
-4\mu\sqrt{(1-\kappa)\kappa} \int_0^{\tau} \!\!\!\int_{\Om}\lr{\nabla \sqrt{\vr_\ep } \otimes \sqrt{\vr_\ep}(\vw_\ep-\vW)} : \ba\vV \dx\ds \\
&\qquad  \qquad 
-2\mu\sqrt{(1-\kappa)\kappa} \langle\sqrt{\vr_\ep}\oline{\sqrt{\vr_\ep}\ba(\vw_\ep-\vW)},\ba\vV\rangle\\
&=
-2\mu\sqrt{(1-\kappa)\kappa} \int_0^{\tau} \!\!\!\int_{\Om}\lr{\nabla \log{\vr_\ep } \otimes {\vr_\ep}(\vw_\ep-\vW)} : \ba\vV \dx\ds,
\end{split}
\end{align}
where the term $-2\mu\sqrt{(1-\kappa)\kappa} \langle\sqrt{\vr_\ep}\oline{\sqrt{\vr_\ep}\ba(\vw_\ep-\vW)},\ba\vV\rangle$ above disappears thanks to the relation $\oline{\sqrt\vr_\ep \ba\vw_\ep}=0$ (which holds true due to the constraint \eqref{def2:prop} again).

At this point, we observe that the terms $Z_{1,b}$ and $Z_{2,b}$ exactly cancel out the terms $R_7$ and $R_8$ appearing in \eqref{rel-ent-2bis} above.
On the other hand, if we define $Z := Z_{1,a} + Z_{2,a} + Z_{3} + Z_4 + Z_5 + Z_6$, we can write
\begin{align}\label{z}
\nonumber Z
& = 2\mu\int_0^{\tau}\!\!\!\int_{\Om}  \nabla \log b \otimes\vr_{\ep} (\vu_{\ep}-\vU): \Big[ 
	  (1-\kappa)\bd \vV  	     -    \sqrt{\kappa (1-\kappa)}  \Grad \vW         \Big]    \dx\ds  \\ 
\nonumber	 &\quad   +
     2\mu\kappa\int_0^{\tau} \!\!\!\int_{\Om} \nabla \log b \otimes\vr_{\ep} (\vv_{\ep}-\vV) :     \ba \vV \dx\dt   \\
\nonumber	& \quad +  
    \int_0^{\tau} \!\!\!\int_{\Om} \Grad\log b\otimes \vr_{\ep} (\vw_{\ep}-\vW) :\left[ \mu  \Big( 2\sqrt{\kappa (1-\kappa)}    \ba \vV \Big) \right]  \dx \ds \\
\nonumber    & \quad 
    -2\mu\int_0^{\tau}\!\!\!\int_{\Om}   \Grad\log\vr_\ep\otimes\vr_{\ep} (\vu_{\ep}-\vU):  \Big[ 
	 (1-\kappa)\bd \vV 	     -   \sqrt{\kappa (1-\kappa)}  \Grad \vW         \Big]    \dx\ds \\
\nonumber     & \quad 
     -2\mu\kappa\int_0^{\tau} \!\!\!\int_{\Om}{\nabla \log\vr_\ep  \otimes {\vr_\ep}(\vv_\ep-\vV)} : \ba\vV \dx\ds\\ 
 \nonumber   &  \quad 
    -2\mu\sqrt{(1-\kappa)\kappa} \int_0^{\tau} \!\!\!\int_{\Om}{\nabla \log{\vr_\ep } \otimes {\vr_\ep}(\vw_\ep-\vW)} : \ba\vV \dx\ds \\ 
 \nonumber   & =
    2\mu\int_0^{\tau}\!\!\!\int_{\Om}  \lr{\nabla \log b-\nabla\log\vr_\ep} \otimes\vr_{\ep} (\vu_{\ep}-\vU): \Big[ 
	  (1-\kappa)\bd \vV  	     -    \sqrt{\kappa (1-\kappa)}  \Grad \vW         \Big]    \dx\ds  \\
 \nonumber      & \quad 
       +2\mu\kappa\int_0^{\tau} \!\!\!\int_{\Om} \lr{\nabla \log b-\nabla\log\vr_\ep} \otimes\vr_{\ep} (\vv_{\ep}-\vV) :     \ba \vV \dx\dt   \\
	& \quad 
    +  2\mu\sqrt{\kappa (1-\kappa)}  \int_0^{\tau}  \!\!\!\int_{\Om}\lr{\nabla \log b-\nabla\log\vr_\ep}\otimes \vr_{\ep} (\vw_{\ep}-\vW) :   \ba \vV   \dx \ds .
\end{align}

Therefore, \eqref{rel-ent-2bis} reduces to the following inequality:
\begin{align}
\nonumber	& \mathscr{E}( \vr_\ep,\vv_\ep,\vw_\ep| b,\vV,\vW)(\tau)-\mathscr{E}( \vr_{0,\ep},\vv_{0,\ep},\vw_{0,\ep}| b,\vV_0,\vW_0)    \\
\nonumber	& \qquad +   2 \kappa \mu \int_0^{\tau}  \!\!\!\int_{\Om}  \left|\oline{ \sqrt{\vr_\ep}  \, \ba(\vv_\ep-\vV) } \right|^2 \dx\ds + \\
\nonumber & \qquad +2\mu \int_0^{\tau}  \!\!\!\int_{\Om} \left| \oline{\sqrt{\vr_\ep}  \, \bd \left( \sqrt{1-\kappa}(\vv_\ep-\vV)-\sqrt{\kappa}(\vw_\ep-\vW)   \right)} \right|^2   \dx\ds             \\
\nonumber	& \qquad 
+ 2\f{\kappa \mu}{\ep^2} \int_0^{\tau}  \!\!\!\int_{\Om}  \vr_{\ep}  p'(\vr_{\ep}) \vert \Grad \log \vr_{\ep}-\Grad \log b \vert^2       \dx\ds \\ 	& \quad   \leq C\int^\t_0\mathscr{E}( \vr_\ep,\vv_\ep,\vw_\ep| b,\vV,\vW)(s)\ds +\sum_{j=1}^4\mathcal{A}_j ,
 	\label{rel-ent-3}
\end{align}
where we have defined
\eqh{
 \sum_{j=1}^4\mathcal{A}_j =     &  \int_0^{\tau}\!\!\!\int_{\Om}  \vr_{\ep} (\vv_{\ep}-\vV) \cdot  \Grad \Pi \dx\ds \\
& + 2\mu\int_0^{\tau}\!\!\!\int_{\Om}  \lr{\nabla \log b-\nabla\log\vr_\ep} \otimes\vr_{\ep} (\vu_{\ep}-\vU): 
\Big[ (1-\kappa)\bd \vV  	     -    \sqrt{\kappa (1-\kappa)}  \Grad \vW         \Big]    \dx\ds  \\
   &    +2\mu\kappa\int_0^{\tau} \!\!\!\int_{\Om} \lr{\nabla \log b-\nabla\log\vr_\ep} \otimes\vr_{\ep} (\vv_{\ep}-\vV) :     \ba \vV \dx\dt   \\
	& +  2\mu\sqrt{\kappa (1-\kappa)}  \int_0^{\tau}  \!\!\!\int_{\Om}\lr{\nabla \log b-\nabla\log\vr_\ep}\otimes \vr_{\ep} (\vw_{\ep}-\vW) :   \ba \vV   \dx \ds .
 }
\subsection{Estimate of the right-hand side of the relative $\kappa$-entropy inequality}\label{Sec:5.2}
We have to bound all the $\mathcal{A}_j$'s in terms of the relative $\k$-entropy inequality \eqref{rel-ent-3}.

We start by considering $\mc A_2 + \mc A_3 + \mc A_4$.
The crucial observation here is that, due to Remark \ref{r:w}, we have the equality
\begin{align*}
\sqrt{\vr_\ep}  \, \nabla \log \vr_{\ep} = 2\nabla\sqrt{\vr_\ep} = \frac{1}{2\mu\sqrt{\k (1-\k)}} \sqrt{\vr_\ep}\vw_\ep.
\end{align*}
Therefore, we deduce that
\[
 \sqrt{\vr_\ep} \Big(\nabla\log\vr_\ep - \nabla\log b\Big) = \frac{1}{2\mu\sqrt{\k (1-\k)}} \sqrt{\vr_\ep}\Big(\vw_\ep- \vW\Big),
\]
whence we can easily estimate, for a suitable constant $C>0$, those terms as
\begin{align*}
\left|\mc A_2 \right| + \left|\mc A_3 \right| + \left|\mc A_4 \right|\leq	  C  \int_0^{\tau}  \mathscr{E}( \vr_{\ep},\vv_{\ep},\vw_{\ep}| b,\vV,\vW)(s)  \ds.
\end{align*}
Thus, in order to conclude the convergence towards the target profiles, it remains to find a suitable bound for $ \mc A_1$.
The argument for doing that is based on the weak convergence properties of the family $\big\{(\vr_\ep, \vv_\ep, \vw_\ep)\big\}_{\ep>0}$.
To begin with, let us split $\mc A_1$ as follows:
\begin{align*}
 \mc A_1 &= \int_0^{\tau}\!\!\!\int_{\Om}  \vr_{\ep} (\vv_{\ep}-\vV) \cdot \Grad \Pi \dx\ds \\
 &=\int_0^{\tau}  \!\!\!\int_{\Om}\vr_{\ep} (\vu_{\ep}-\vU) \cdot\nabla \Pi  \dx\ds+ 
 \beta \int_0^{\tau}  \!\!\!\int_{\Om}\vr_{\ep} (\vw_{\ep}- \vW) \cdot \nabla \Pi  \dx\ds  \\
		&=\int_0^{\tau}  \!\!\!\int_{\Om} (\vr_{\ep} \vu_{\ep}-\vr_{\ep} \vU )\cdot  \nabla \Pi  \dx\ds + 
		\beta \int_0^{\tau}  \!\!\!\int_{\Om} \Big((\vr_{\ep} \vw_{\ep} - b\vW) - (\vr_{\ep}-b)\vW \Big) \cdot \nabla \Pi  \dx\ds .
\end{align*}
Let us introduce the following useful notation: we denote by $R_\ep(t)$ any ``remainder term'', namely any term having the property that
\begin{align}\label{eq:remainder}
\text{for all} \quad 
T<T_\ast,\qquad \lim_{\ep\to0} \sup_{t\in[0,T]} R_\ep(t) = 0,
\end{align}
where $T_\ast>0$ is the time of existence of the smooth solution to the target problem \eqref{ane-app}.

Owing to the strong convergence of $\vr_\ep$ towards $b$ claimed in \eqref{eq:conv_prop}, which could actually be quantified in terms
of powers of $\ep$ (see \tsl{e.g.} relation (24) in \cite{FZ23}), we easily see that the integral
\[
 \int_0^{\tau}  \!\!\!\int_{\Om}  (\vr_{\ep}-b)\vW \cdot \nabla \Pi  \dx\ds  = R_\ep(\t),
\]
in the sense that relation \eqref{eq:remainder} is satisfied.

Next, we focus on the difference $\vr_{\ep} \vw_{\ep} - b\vW$. Recall that from \eqref{vw:conv} and \eqref{vw:conv2} it follows that
\[
 \sqrt{\vr_\ep}\vw_\ep = 4\mu\sqrt{\k(1-\k)} \nabla\sqrt{\vr_\ep} \stackrel{*}{\rightharpoonup} 4\mu\sqrt{\k(1-\k)} \nabla\sqrt{b}
 \qquad \mbox{ in }\quad L^\infty\big(0,T;L^2(\Omega)\big).
\]
Then, using the strong convergence of $\vre$ from \eqref{eq:conv_prop}, we gather that
\[
 \vr_\ep \vw_\ep \stackrel{*}{\rightharpoonup} 4\mu\sqrt{\k(1-\k)} \sqrt{b} \nabla\sqrt{b} \qquad \mbox{ in }\quad L^\infty\big(0,T;L^q(\Omega)\big),
\]
where $q = 2 \min\{2,\g\} / (1+\min\{2,\g\})$ as defined in Subsection \ref{SSec:uni}, see after \eqref{bound:rhou}. Since we have
\[
 4\mu\sqrt{\k(1-\k)} \sqrt{b} \nabla\sqrt{b} = 2\mu\sqrt{\k(1-\k)} \nabla b = b \vW,
\]
we see that the difference  $\vr_{\ep} \vw_{\ep} - b\vW$ verifies
\[
  \vr_{\ep} \vw_{\ep} - b\vW \stackrel{*}{\rightharpoonup} 0 \qquad \mbox{ in }\quad L^\infty\big(0,T;L^q(\Omega)\big).
\]
We can thus apply Lemma 2.1 of \cite{C-F-G} to infer that also
\[
 \int_0^{\tau}  \!\!\!\int_{\Om} (\vr_{\ep} \vw_{\ep} - b\vW) \cdot \nabla \Pi  \dx\ds = R_\ep(\t)
\]
is a remainder in the sense of relation \eqref{eq:remainder}.

An absolutely analogous argument applies also to the two remaining integrals, namely
\[
 \int_0^{\tau}  \!\!\!\int_{\Om} \vr_{\ep} \vu_{\ep}\cdot  \nabla \Pi  \dx\ds \qquad \mbox{ and }\qquad
 \int_0^{\tau}  \!\!\!\int_{\Om} \vr_{\ep} \vU \cdot  \nabla \Pi  \dx\ds,
\]
and show that they are both remainders in the sense of \eqref{eq:remainder}. Indeed, for the latter term one uses the strong convergence
of $\vr_\ep$ to $b$ obtained in \eqref{eq:conv_prop}, together with the fact that $\Div(b \vU) = 0$. As for the former integral,
we use the last weak convergence property in \eqref{eq:conv_prop}, together with \eqref{eq:constr-well}.

All in all, we have shown that
\[
 \mc A_1 = R_\ep(\t)\qquad \qquad \mbox{ in the sense of relation \eqref{eq:remainder}}.
\]

Putting this bound into \eqref{rel-ent-3}, and forgetting about the viscous terms (which are of no use in our argument, as they do not allow to infer additional
regularity properties for the gradient of the velocity field) we find the estimate
\begin{align}
\nonumber	&\mathscr{E}( \vr_\ep,\vv_\ep,\vw_\ep| b,\vV,\vW)(\tau) \\
& \quad \leq  
\mathscr{E}( \vr_{0,\ep},\vv_{0,\ep},\vw_{0,\ep}| b,\vV_0,\vW_0)+C\int^\t_0\mathscr{E}( \vr_\ep,\vv_\ep,\vw_\ep| b,\vV,\vW)(s)\ds + R_\ep(\t) . 
 \label{rel-ent-final}
\end{align}
Observe that, owing to the well-preparation assumption on the initial datum $\big(\vr_{0,\ep},\vv_{0,\ep},\vw_{0,\ep}\big)$, it is easy to see that
\begin{equation} \label{conv:in-dat}
\mathscr{E}( \vr_{0,\ep},\vv_{0,\ep},\vw_{0,\ep}| b,\vV_0,\vW_0) \longrightarrow 0 \qquad \mbox{ for }\quad \ep\to0.
\end{equation}
After applying the Gr\"onwall lemma, we therefore get that, for any $T<T_\ast$, one has
\eqh{
\sup_{\t\in[0,T]}\mathscr{E}( \vr_\ep,\vv_\ep,\vw_\ep| b,\vV,\vW)(\tau) \longrightarrow 0 \qquad \mbox{ for }\quad \ep\to0.}
Now,  using the triangular inequality this implies
\[
 \left\|\sqrt{\vr_\ep} \vv_\ep - \sqrt{b}\vV\right\|_{L^2(\Omega)} \leq \left\|\sqrt{\vr_\ep} \vv_\ep - \sqrt{\vr_\ep}\vV\right\|_{L^2(\Omega)} +
 \left\|\big(\sqrt{\vr_\ep} - \sqrt{b}\big)\vV\right\|_{L^2(\Omega)}
\]
and similarly for $\vw_\ep$'s. Hence, we deduce the strong convergence properties claimed in \eqref{conv:well}:
\[
 \sqrt{\vr_\ep}\vv_\ep \longrightarrow \sqrt{b}\vV\quad \mbox{ and }\quad \sqrt{\vr_\ep}\vw_\ep \longrightarrow \sqrt{b}\vW
 \qquad \mbox{ in }\quad L^\infty\big(0,T;L^2(\Omega)\big).
\]
As a byproduct, using the well-known relations linking $\vu_\ep$, $\vv_\ep$ and $\vw_\ep$, we deduce that
\[
  \sqrt{\vr_\ep}\vu_\ep \longrightarrow \sqrt{b}\vU
 \quad\qquad \mbox{ in }\quad L^\infty\big(0,T;L^2(\Omega)\big),
\]
which in particular implies, together with \eqref{eq:conv_prop}, that $\vU = \wtilde \vu / \sqrt{b}$.

This concludes the proof of Theorem \ref{TH1}.

\section{Singular limit for ill-prepared initial data}\label{sec:ill}

Here we consider the case of ill-prepared initial data. By relying again on the relative $\kappa$-entropy inequality, we will complete the proof
of Theorem \ref{TH2}.

Our starting point are the uniform estimates and convergences established in  Section \ref{SSec:uni}. In particular, we have the convergences \eqref{eq:conv_prop:ill-2} and identification of the limit in the continuity equation \eqref{eq:constr-well}. 
The difference now is that, unlike in the well-prepared data case, the application of the $\kappa$-entropy inequality with test functions $ (b, \vV,\vW) $,
where $ (b, \vU=\vV-\beta \vW) $ given by Theorem \ref{thm:LWP} solves the target system \eqref{ane-app}, does not lead to desired convergence.
Indeed, the hypotheses on the initial data are not enough to guarantee the strong convergence towards the initial datum of the target system;
in other words, we have
\begin{equation*} 
\lim _{\ep\to0}\mathscr{E}( \vr_{0,\ep},\vv_{0,\ep},\vw_{0,\ep}| b,\vV_0,\vW_0)\neq 0,
\end{equation*}
which does not allow to apply the Gr\"onwall-type argument as before. At the same time, this lack of convergence generates fast oscillations in time of the solutions.
The remedy is to consider test functions augmented by solutions to a certain wave system.

\subsection{Acoustic waves  system}\label{subs:acou}
The key issue in analyzing the ill-prepared data case is to capture the dispersive behavior of solutions to the acoustic wave equations. To derive these equations formally, we denote
\begin{align*}
    s_{\ep}:= \f{\vr_{\ep}-b}{\ep} \qquad \mbox{ and }\qquad
    \mathbf{m}_{\ep}:=
    \vr_{\ep} \vu_{\ep}
\end{align*}
and reformulate \eqref{dege-ns} as 
\begin{align}\label{deg-ns-m}
\left\{\begin{aligned}
& \ep \p_t  s_{\ep}+ \Div\, \mathbf{m}_{\ep}=0,\\
&  \ep \p_t   \mathbf{m}_{\ep} 
+
\f{1}{\ep} \Big(
\Grad p(\vr_{\ep}) -\vr_{\ep}\Grad G
\Big)
=\ep 
\Big(
2 \mu \Div ( \vr_{\ep} \bd \vu_{\ep} )
-\Div (\vr_{\ep}  \vu_{\ep} \otimes \vu_{\ep} )
\Big)
.
\end{aligned}\right.
\end{align}
Following Lemma 4.6 in \cite{FZ23}, a direct algebraic calculation based on the relation $\Grad p(b)=b\Grad G$ gives 
\begin{align*}
    \f{1}{\ep^2} \Big(
\Grad p(\vr_{\ep}) -\vr_{\ep}\Grad G
\Big)
=
\f{1}{\ep} b \Grad ( P''(b) s_{\ep}    )
+
 \f{1}{\ep^2}  \Grad 
 \Big(
p(\vr_{\ep})-p(b)-p'(b)(\vr_{\ep}-b)
 \Big).
\end{align*}
Hence, we further rewrite \eqref{deg-ns-m} as 
\begin{align}\label{deg-ns-m-2}
\left\{\begin{aligned}
& \ep \p_t  s_{\ep}+ \Div \, \mathbf{m}_{\ep}=0,\\
&  \ep \p_t   \mathbf{m}_{\ep} 
+
b \Grad ( P''(b) s_{\ep}    )
=\ep 
\mathbf{f}_{\ep}
,
\end{aligned}\right.
\end{align}
where we have defined
\begin{align*}
  \mathbf{f}_{\ep}
  :=
2 \mu \Div ( \vr_{\ep} \bd \vu_{\ep} )
-
\Div (\vr_{\ep}  \vu_{\ep} \otimes \vu_{\ep} )
-
\f{1}{\ep^2}  \Grad 
 \Big(
p(\vr_{\ep})-p(b)-p'(b)(\vr_{\ep}-b)
 \Big).
\end{align*}
The generalized Helmholtz decomposition of the vector field $\mathbf{m}_{\ep}$ is given as:
\begin{align*}
\mathbf{m}_{\ep}
=
\mathbb{P}_{b}[\mathbf{m}_{\ep}]
+
\mathbb{Q}_{b}[\mathbf{m}_{\ep}]
=
\mathbb{P}_{b}[\mathbf{m}_{\ep}]
+
b \Grad \Phi_{\ep}, 
\quad 
\div \,  \mathbb{P}_{b}[\mathbf{m}_{\ep}]  =0.
\end{align*}
So, applying $\mathbb{Q}_{b}$ to \eqref{deg-ns-m-2}$_2$ and dropping the force term $ \mathbf{f}_{\ep}$, we get the following system:
\begin{align*}
\left\{\begin{aligned}
& {\ep} \partial_t s_{\ep} + \Div\lr{ b  \nabla \Phi_{\ep}} =0,\\
&  \ep \partial_t \nabla \Phi_{\ep} + \Grad \lr{  s_{\ep} P^{\prime \prime}(b)} = 0.
\end{aligned}\right.
\end{align*}
If $ s_{\ep} $ and $ \Phi_{\ep}$ are sufficiently smooth, one obtains from this system a linear wave equation with variable coefficient $b$:  
\begin{align*}
       &{\ep}^2 \partial_{tt} s_{\ep} - \Div \lr{ b \nabla \lr{  P^{\prime \prime}(b) \, s_{\ep}}} =0.
\end{align*}

To make the following arguments rigorous,  we denote by $ ( s_\ed, \Phi_\ed )$  a unique, and as we seee below regular, solution to the acoustic wave system:
\begin{equation}\label{acc-wave}
\left\{\begin{aligned}
& {\ep} \partial_t s_\ed + \Div\lr{ b  \nabla \Phi_\ed} =0,\\
&  \ep \partial_t \nabla \Phi_\ed =  -\Grad \lr{ s_\ed P^{\prime \prime}(b)}, 
\end{aligned}\right.
\end{equation}
emanating from the smooth initial data 
\begin{align}\label{acc-wave-data}
	s_\ed (0,\cdot) = s_{0,\ep,\delta} = \frac{b}{p^\prime(b)} \left[ \frac{p^\prime(b)}{b} \phi_{0,\ep} \right]_\delta , 
    \quad 
    \Phi_\ed(0,\cdot) = \Phi_{0,\ep,\delta}=\left[ \Phi_{0,\ep} \right]_{\delta}, 
    \quad
    \nabla \Phi_{0,\ep}=  
    \f{1}{b} \mathbb{Q}_{b}[  b \mathbf{u}_{0,\ep}]     ,
\end{align}
where $[\cdot]_{\delta}$ denotes a suitable regularization which we are now going to describe.
Following  a regularization  scheme from \cite{FJN14}, we first introduce the operator
\begin{align*}
	\mathcal{T}_{b} [v] :v \mapsto -\frac{p^\prime(b)}{b} \Div \lr{ b \nabla v}.
\end{align*}
$\mathcal{T}_{b}$ is a linear, self-adjoint, non-negative operator on the weighted Hilbert space $L^2_g$ with the weight $g:=\frac{b}{p^\prime(b)}$, studied for example in \cite{De-Pr-91,De-Pr-92}.
Moreover, let $\mathcal{M}_\delta\in C_c^\infty(\mathbb{R})$ and $\psi_\delta\in C_c^\infty(\mathbb{R}^3)$ be such that 
\begin{align*}
	\begin{split}
		&0\leq \mathcal{M}_\delta (\cdot )\leq 1,\quad  \mathcal{M}_\delta(z)=\mathcal{M}_\delta(-z) 
        \\
        &
         \mathcal{M}_\delta(z)=\begin{cases}
			1 \quad \text{ for } z\in (-\frac{1}{\delta},-\delta) \cup (\delta, \frac{1}{\delta}) , \\
			0  \quad \text{ for } z\in (-\infty,-\frac{2}{\delta})
            \cup (-\frac{\delta}{2},\frac{\delta}{2} )
            \cup (\frac{2}{\delta},\infty ) , 
		\end{cases} \\
		&0\leq \psi_\delta\leq 1 \text{ such that } \psi_\delta(x)=\begin{cases}
			1  \text{ for } |x| < \frac{1}{\delta} , \\
			0   \text{ for } |x| > \frac{2}{\delta} .
		\end{cases}
	\end{split}
\end{align*} 
Then, we use $\mc M_\delta$ to perform a truncation in frequencies and $\psi_\delta$ to localize functions on the physical space.
More precisely, the $\delta$-regularization  of any  $f \in L^2(\mathbb{R}^3)$ is defined by 
\begin{align*}
	[f]_\delta := \mathcal{M}_\delta\lr{\sqrt{\mathcal{T}_b}}[ \psi_\delta f ].
\end{align*}

Now, note that total energy of the acoustic wave system \eqref{acc-wave} is conserved, namely
\begin{align}\label{acc-wave-energy}
	\frac{\text{d}}{\dt} \int_{\mathbb{R}^3}
    {\Big(b| \nabla \Phi_\ed |^2 + 
     P''(b) |s_\ed|^2\Big)} \dx =0. 
\end{align} 
As a consequence of \eqref{acc-wave-energy} and the fact that $b$ has positive lower and upper bounds, one can further deduce the following global regularity estimate for the solutions:    
	\begin{align}  \label{acc-wave-reg}      
		\sup_{t\in[0,T]} \lr{ \Vert \Phi_\ed(t)\Vert_{W^{m,2} \lr{\mathbb{R}^3}} + \Vert s_\ed\Vert_{W^{m,2} \lr{\mathbb{R}^3}} }  
        \leq 
        C(m,\delta) 
        \lr{
        \Vert  \nabla \Phi_{0,\ep,\delta} \Vert_{L^{2} \lr{ \mathbb{R} ^3}} 
        + \Vert s_{0,\ep,\delta}\Vert_{L^{2} \lr{ \mathbb{R}^3}} 
        },
	\end{align}
where $m\geq 0$ in an integer, and the positive constant $C$ depends only on $m$ and $\delta$; in particular, it is independent of $\ep$. 
In addition,  we have the following dispersive estimates:
\begin{align}\label{acc-wave-dis}
		\int_{0}^T \lr{ \Vert \Phi_\ed(t)\Vert_{W^{m,\infty} \lr{\mathbb{R}^3}} + \Vert s_\ed\Vert_{W^{m,\infty} \lr{\mathbb{R}^3}} } \dt \leq \omega(m,\ep,\delta) \lr{ \Vert \nabla \Phi_{0,\ep,\delta} \Vert_{L^{2} \lr{\mathbb{R}^3}} + \Vert s_{0,\ep,\delta}\Vert_{L^{2} \lr{\mathbb{R}^3}} }, 
\end{align}
where    
\begin{align*}
		\omega(m,\ep,\delta) \rightarrow 0 \quad \text{ as } \ep \rightarrow 0  \,\,\text{ for fixed values of }  m \geq 0,\; \delta>0.
\end{align*} 
Both estimates have been obtained by Feireisl et al. \cite{FJN14}, and by Donatelli and Feireisl \cite{DF-17}. The details of the proof are thus omitted.


	\subsection{Relative entropy inequality for ill-prepared data}

Let $(\vr_{\ep},\vv_{\ep},\vw_{\ep})$ be a global $\kappa$-entropy weak solution in the sense of Definition \ref{def-1} and $( r_{\ed},\vV_{\ed},\vW_{\ed})$ be smooth test functions with $r_{\ed}>0$. 
Because $ r_{\ed}$ will not (eventually) satisfy the continuity equation, we cannot benefit from reduction of the relative $\kappa$-entropy discussed in Section \ref{ss:improved}. Instead, we retrieve the general $\kappa$-relative energy inequality derived in Lemma \ref{rela-entr-ine}, we have:
{\small{
	\eq{\label{relEnt:6}
				& \mathscr{E}( \vr_{\ep},\vv_{\ep},\vw_{\ep}| r_{\ed},\vV_{\ed},\vW_{\ed})(\tau)  -\mathscr{E}( \vr_{\ep},\vv_{\ep},\vw_{\ep}| r_{\ed},\vV_{\ed},\vW_{\ed})(0) \\
				& \quad \quad +   2 \kappa \mu \int_0^{\tau}  \!\!\!\int_{\Om} \left|\Ov{ \sqrt{\vr_{\ep}}  \,  \ba(\vv_\ep-\vV_{\ed}) } \right|^2 \dx\ds + 2\mu \int_0^{\tau}  \!\!\!\int_{\Om} \left| \Ov{ \sqrt{\vr_\ep} \, \bd \left( \sqrt{1-\kappa}(\vv_{\ep}-\vV_{\ed})-\sqrt{\kappa}(\vw_{\ep}-\vW_{\ed})   \right)   }      \right|^2       \dx\ds                    \\
				& \quad \quad + 2\f{\kappa \mu}{\ep^2} \int_0^{\tau}  \!\!\!\int_{\Om} \vr_{\ep} \Big( p'(\vr_{\ep})\Grad \log \vr_{\ep}-p'(r_{\ed})\Grad \log r_{\ed}  \Big) \cdot (\Grad \log \vr_{\ep}-\Grad \log r_{\ed}   )           \dx\ds \\ 
				& \quad  \leq   
			 \int_0^{\tau} \int_{\Om}\vr_{\ep}\left[  \left( \vv_{\ep}-\sqrt{ \f{\ka}{1-\ka} } \vw_{\ep}  \right)\cdot \Grad \vW_{\ed} \cdot (\vW_{\ed}-\vw_{\ep})
				+\left( \vv_{\ep}-\sqrt{ \f{\ka}{1-\ka} } \vw_{\ep}  \right)\cdot \Grad \vV_{\ed} \cdot (\vV_{\ed}-\vv_{\ep})             \right] \dx\ds\\
				& \quad \quad + \int_0^{\tau}  \!\!\!\int_{\Om}\vr_{\ep}\Big(  \p_t \vW_{\ed}\cdot (\vW_{\ed}-\vw_{\ep})+\p_t \vV_{\ed} \cdot (\vV_{\ed}-\vv_{\ep})     \Big) \dx\ds \\
				& \quad \quad +\f{1}{\ep^2} \int_0^{\tau}  \!\!\!\int_{\Om}\p_t P'( r_{\ed})(r_{\ed}-\vr_{\ep}) \dx\ds \\
				& \quad \quad 
				-
				\f{1}{\ep^2} 
				\int_0^{\tau}  \!\!\!\int_{\Om}\Grad P'( r_{\ed}) \cdot \left[ \vr_{\ep}\left( \vv_{\ep}-\sqrt{ \f{\ka}{1-\ka} } \vw_{\ep}  \right)-r_{\ed} \left( \vV_{\ed}-\sqrt{ \f{\ka}{1-\ka} } \vW_{\ed}   \right)         \right] \dx\ds  
				\\
				& \quad \quad +\f{1}{\ep^2} \int_0^{\tau}  \!\!\!\int_{\Om}( p(r_{\ed})-p(\vr_{\ep})  ) \Div \left( \vV_{\ed}-\sqrt{ \f{\ka}{1-\ka} } \vW_{\ed}   \right)  \dx\ds \\ 
				&\quad \quad -\f{\ka}{\ep^2} \int_0^{\tau}  \!\!\!\int_{\Om}p'(\vr_{\ep})\Grad \vr_{\ep}\cdot \left( 2\mu \f{\Grad r_{\ed}}{r_{\ed}} - \f{1}{ \sqrt{\ka(1-\ka)}  }\vW_{\ed}       \right) \dx\ds \\
				& \quad \quad + 2\mu \int_0^{\tau}  \!\!\!\int_{\Om} \sqrt{\vr_{\ep}} \Big(  \bd(\sqrt{1-\kappa} \vV)-
                \bd( \sqrt{\ka} \vW_{\ed})     \Big): \Ov{\sqrt{\vr_{\ep}} \Big(  \bd(\sqrt{1-\kappa} (\vV_{\ed}-\vv_{\ep}))-
                \bd( \sqrt{\ka} (\vW_{\ed}-\vw_{\ep}))     \Big) }      \dx\ds \\
				&  \quad \quad 
                + 
                2\ka \mu \int_0^{\tau}  \!\!\!\int_{\Om} \sqrt{\vr_{\ep}}  \, \ba  \vV_{\ed}: \Ov{\sqrt{\vr_{\ep}}  \, \ba(\vV_{\ed}-\vv_{\ep})} \dx\ds 
                + 
                2\f{\kappa \mu}{\ep^2} \int_0^{\tau}  \!\!\!\int_{\Om}\f{\vr_{\ep}}{ r_{\ed}} p'( r_{\ed}) \Grad r_{\ed} \cdot \left( \f{\Grad r_{\ed}}{ r_{\ed}}-\f{\Grad \vr_{\ep}}{\vr_{\ep}}  \right)           \dx\ds   \\
&  \quad \quad 
+ 2 \sqrt{ \ka (1-\ka) } \mu \int_0^{\tau}  \!\!\!\int_{\Om}  \sqrt{\vr_{\ep}}   \vW_{\ed}: \Ov{  \sqrt{\vr_{\ep}} \ba \vv_{\ep}}     \dx\ds 
+ \f{1}{\ep^2} \int_0^{\tau} \int_{\Om}\vr_{\ep}\Grad G \cdot (\vv_{\ep}-\vV_{\ed}) \dx\ds\\
&\quad=\sum_{i=1}^{11} I_i.
	}
	}}
To proceed, we notice that $I_3+I_4+I_5$ gives
	\begin{align*} 
		I_3+I_4+I_5& 
		= - \f{1}{\ep^2} \int_0^{\tau}  \!\!\!\int_{\Om}
		\Big(
		p(\vr_{\ep}) -p( r_{\ed} )-p^\prime( r_{\ed}) (\vr_{\ep}-r_\ed )
		\Big)
		\Div \vU_{\ed}  \dx\ds \\ 
		& \quad \quad +\f{1}{\ep^2} \int_0^{\tau}  \!\!\!\int_{\Om} P{''}( r_{\ed})( r_{\ed}-\vr_{\ep}) \lr{\p_t r_{\ed} + \Div( r_{\ed} \vU_{\ed}) } \dx\ds \\
		& \quad \quad - \f{1}{\ep^2} 
		\int_0^{\tau}  \!\!\!\int_{\Om}\Grad P'( r_{\ed}) \cdot \vr_{\ep}(\vu_{\ep}-\vU_{\ed}) \dx\ds,
	\end{align*}
    where we used
    $ \vU_{\ed}:= \vV_{\ed}-\beta \vW_{\ed}$, $\beta=\sqrt{\frac{\kappa}{1-\kappa}}$.
	Observe next that 
	\begin{align*}
		& \f{1}{\ep^2} 
		\int_0^{\tau}  \!\!\!\int_{\Om}\Grad P'( r_{\ed}) 
		\cdot 
		\vr_{\ep}(\vu_{\ep}-\vU_{\ed}) 
		\dx\ds 
		\\
		& \quad =
		\f{1}{\ep^2} 
		\int_0^{\tau}  \!\!\!\int_{\Om}\Grad P'( r_{\ed}) \cdot \vr_{\ep}(\vv_{\ep}-\vV_{\ed}) \dx\ds - \f{\beta}{\ep^2} \int_0^{\tau}  \!\!\!\int_{\Om} \nabla P^{\prime  }(r_\ed)\cdot \vr_{\ep} 
		\left( \vw_{\ep}-\vW_{\ed}\right)       
		\dx\ds  .
	\end{align*}
    Focus on the second term on the right-hand side of the above equality for a while: we may sum it up with $I_6$ and $I_9$ to obtain
        \begin{align*}
            & \f{\beta}{\ep^2} \int_0^{\tau}  \!\!\!\int_{\Om} \nabla P^{\prime  }(r_\ed)\cdot \vr_{\ep} 
		\left( \vw_{\ep}-\vW_{\ed}\right)       
		\dx\ds -\f{\ka}{\ep^2} \int_0^{\tau}  \!\!\!\int_{\Om}p'(\vr_{\ep})\Grad \vr_{\ep}\cdot \left( 2\mu \f{\Grad r_{\ed}}{r_{\ed}} - \f{1}{ \sqrt{\ka(1-\ka)}  }\vW_{\ed}       \right) \dx\ds \\
        & \qquad 
        +2\f{\kappa \mu}{\ep^2} \int_0^{\tau}  \!\!\!\int_{\Om}\f{\vr_\ep}{ r_{\ed}} p'( r_{\ed}) \Grad r_{\ed} \cdot \left( \f{\Grad r_{\ed}}{ r_{\ed}}-\f{\Grad \vr_{\ep}}{\vr_{\ep}}  \right)           \dx\ds \\
        & \quad 
        =
        \f{\beta}{\ep^2} \int_0^{\tau}  \!\!\!\int_{\Om} \vr_\ep p^\prime(r_{\ed}) \nabla \log r_{\ed} \cdot 
		\left( \vw_{\ep}-\vW_{\ed}\right)       
		\dx\ds \\
        & \qquad 
        -\f{\ka}{\ep^2} \int_0^{\tau}  \!\!\!\int_{\Om} \vr_\ep p'(\vr_{\ep})  \Grad \log \vr_{\ep}\cdot \left( 2\mu \nabla \log r_{\ed} - \f{1}{ \sqrt{\ka(1-\ka)}  }\vW_{\ed}       \right) \dx\ds\\
        & \qquad 
        +2\f{\kappa \mu}{\ep^2} \int_0^{\tau}  \!\!\!\int_{\Om} \vr_{\ep} p'( r_{\ed}) \Grad \log r_{\ed} \cdot \left( \nabla \log r_{\ed} -\nabla \log \vr_\ep \right)           \dx\ds \\
         & \quad =
         -2\f{\ka\mu}{\ep^2} \int_0^{\tau}  \!\!\!\int_{\Om} \vr_\ep p'(\vr_{\ep})  \Grad \log \vr_{\ep}\cdot \left(  \nabla \log r_{\ed} - \f{1}{ 2\mu \sqrt{\ka(1-\ka)}  }\vW_{\ed}       \right) \dx\ds\\
          & \qquad 
          +2\f{\kappa \mu}{\ep^2} \int_0^{\tau}  \!\!\!\int_{\Om} \vr_{\ep} p'( r_{\ed}) \Grad \log r_{\ed} \cdot \left( \nabla \log r_{\ed} -\frac{1}{2\mu \sqrt{\kappa(1-\kappa)} } \vW_{\ed} \right)           \dx\ds .
        \end{align*}

Therefore, altogether from the discussion above, we have 
\begin{align} \label{ill-rei1}
    \begin{split}
	I_3+I_4+I_5+&I_6+I_9\\
    =& - \f{1}{\ep^2} \int_0^{\tau}  \!\!\!\int_{\Om}
		\Big(
		p(\vr_{\ep}) -p( r_{\ed} )-p^\prime( r_{\ed}) (\vr_{\ep}-r_\ed )
		\Big)
		\Div \vU_{\ed}  \dx\ds \\ 
		&  +\f{1}{\ep^2} \int_0^{\tau}  \!\!\!\int_{\Om} P{''}( r_{\ed})( r_{\ed}-\vr_{\ep}) \lr{\p_t r_{\ed} + \Div( r_{\ed} \vU_{\ed}) } \dx\ds \\
		&  - \f{1}{\ep^2} 
		\int_0^{\tau}  \!\!\!\int_{\Om}\Grad P'( r_{\ed}) \cdot \vr_{\ep}(\vv_{\ep}-\vV_{\ed}) \dx\ds \\
       &-2\f{\ka\mu}{\ep^2} \int_0^{\tau}  \!\!\!\int_{\Om} \vr_\ep p'(\vr_{\ep})  \Grad \log \vr_{\ep}\cdot \left(  \nabla \log r_{\ed} - \f{1}{ 2\mu \sqrt{\ka(1-\ka)}  }\vW_{\ed}       \right) \dx\ds\\
          &+2\f{\kappa \mu}{\ep^2} \int_0^{\tau}  \!\!\!\int_{\Om} \vr_{\ep} p'( r_{\ed}) \Grad \log r_{\ed} \cdot \left( \nabla \log r_{\ed} -\frac{1}{2\mu \sqrt{\kappa(1-\kappa)} } \vW_{\ed} \right)           \dx\ds .
    \end{split}
\end{align}
Finally, by invoking the algebraic relation \eqref{eq:log-rho}, we can transform \eqref{relEnt:6} into
{\small{
	\eq{\label{relEnt:7}
				& \mathscr{E}( \vr_{\ep},\vv_{\ep},\vw_{\ep}| r_{\ed},\vV_{\ed},\vW_{\ed})(\tau)  -\mathscr{E}( \vr_{\ep},\vv_{\ep},\vw_{\ep}| r_{\ed},\vV_{\ed},\vW_{\ed})(0) \\
				& \quad  +   2 \kappa \mu \int_0^{\tau}  \!\!\!\int_{\Om} \left|\Ov{ \sqrt{\vr_{\ep}}  \,  \ba(\vv_\ep-\vV_{\ed}) } \right|^2 \dx\ds + 2\mu \int_0^{\tau}  \!\!\!\int_{\Om} \left| \Ov{ \sqrt{\vr_\ep} \, \bd \left( \sqrt{1-\kappa}(\vv_{\ep}-\vV_{\ed})-\sqrt{\kappa}(\vw_{\ep}-\vW_{\ed})   \right)   }      \right|^2       \dx\ds                    \\
				& \quad 
                + 2\f{\kappa \mu}{\ep^2}  \int_0^{\tau}\!\!\! \int_{\Om}  \vr_{\ep}  p'(\vr_{\ep}) \vert \Grad \log \vr_{\ep}-\Grad \log  r_{\ed} \vert^2 \dx\ds   \\ 
				&  \leq   
			 \int_0^{\tau}\!\!\! \int_{\Om}\vr_{\ep}\left[  \left( \vv_{\ep}-\sqrt{ \f{\ka}{1-\ka} } \vw_{\ep}  \right)\cdot \Grad \vW_{\ed} \cdot (\vW_{\ed}-\vw_{\ep})
				+\left( \vv_{\ep}-\sqrt{ \f{\ka}{1-\ka} } \vw_{\ep}  \right)\cdot \Grad \vV_{\ed} \cdot (\vV_{\ed}-\vv_{\ep})             \right] \dx\ds\\
				& \quad  + \int_0^{\tau}  \!\!\!\int_{\Om}\vr_{\ep}\Big(  \p_t \vW_{\ed}\cdot (\vW_{\ed}-\vw_{\ep})+\p_t \vV_{\ed} \cdot (\vV_{\ed}-\vv_{\ep})     \Big) \dx\ds \\
  & \quad- \f{1}{\ep^2} \int_0^{\tau}  \!\!\!\int_{\Om}
		\Big(
		p(\vr_{\ep}) -p( r_{\ed} )-p^\prime( r_{\ed}) (\vr_{\ep}-r_\ed )
		\Big)
		\Div \vU_{\ed}  \dx\ds \\ 
		& \quad +\f{1}{\ep^2} \int_0^{\tau}  \!\!\!\int_{\Om} P{''}( r_{\ed})( r_{\ed}-\vr_{\ep}) \lr{\p_t r_{\ed} + \Div( r_{\ed} \vU_{\ed}) } \dx\ds \\
		& \quad - \f{1}{\ep^2} 
		\int_0^{\tau}  \!\!\!\int_{\Om}\Grad P'( r_{\ed}) \cdot \vr_{\ep}(\vv_{\ep}-\vV_{\ed}) \dx\ds \\
       &\quad-2\f{\ka\mu}{\ep^2} \int_0^{\tau}  \!\!\!\int_{\Om} \vr_\ep p'(\vr_{\ep})  \Grad \log \vr_{\ep}\cdot \left(  \nabla \log r_{\ed} - \f{1}{ 2\mu \sqrt{\ka(1-\ka)}  }\vW_{\ed}       \right) \dx\ds\\
          &\quad+2\f{\kappa \mu}{\ep^2} \int_0^{\tau}  \!\!\!\int_{\Om} \vr_{\ep} p'( r_{\ed}) \Grad \log r_{\ed} \cdot \left( \nabla \log r_{\ed} -\frac{1}{2\mu \sqrt{\kappa(1-\kappa)} } \vW_{\ed} \right)           \dx\ds \\
				& \quad + 2\mu \int_0^{\tau}  \!\!\!\int_{\Om} \sqrt{\vr_{\ep}} \Big(  \bd(\sqrt{1-\kappa} \vV)-\bd( \sqrt{\ka} \vW_{\ed})     \Big): \Ov{\sqrt{\vr_{\ep}} \Big(  \bd(\sqrt{1-\kappa} (\vV_{\ed}-\vv_{\ep}))-\bd( \sqrt{\ka} (\vW_{\ed}-\vw_{\ep}))     \Big) }      \dx\ds \\
				& \quad 
                + 
                2\ka \mu \int_0^{\tau}  \!\!\!\int_{\Om} \sqrt{\vr_{\ep}}  \, \ba  \vV_{\ed}: \Ov{\sqrt{\vr_{\ep}}  \, \ba(\vV_{\ed}-\vv_{\ep})} \dx\ds 
                \\
&  \quad  
+ 2 \sqrt{ \ka (1-\ka) } \mu \int_0^{\tau}  \!\!\!\int_{\Om}  \sqrt{\vr_{\ep}}   \vW_{\ed}: \Ov{  \sqrt{\vr_{\ep}} \ba \vv_{\ep}}     \dx\ds 
+ \f{1}{\ep^2} \int_0^{\tau} \int_{\Om}\vr_{\ep}\Grad G \cdot (\vv_{\ep}-\vV_{\ed}) \dx\ds\\
&\quad
- 2\f{\kappa \mu}{\ep^2}\int_0^{\tau}\!\!\!\int_{\Om}  \vr_{\ep} \Big( p'(\vr_{\ep})-p'( r_{\ed})\Big) \Grad \log  r_{\ed} \cdot (\Grad \log \vr_{\ep}-\Grad \log  r_{\ed}    )           \dx\ds.
	}
	}}

\subsection{Reduced relative $\kappa$-entropy inequality} \label{subsubs:reduce:ill}
We now choose suitable test functions 
\((r_{\ed}, \vV_{\ed}, \vW_{\ed})\) by incorporating the acoustic wave system \eqref{acc-wave}. 
More precisely, we will consider \((r_{\ed}, \vV_{\ed}, \vW_{\ed})\) as
	\begin{align}\label{eq:test}
		\begin{split}
			&r_\ed =b+ \ep  s_{\ed} , 
    \qquad\quad	\vV_{\ed}=  \vV+ \nabla \Phi_{\ed} ,  
	\qquad\quad	\vW_{\ed}= \vW ,  
		\end{split}
	\end{align}
	where $\{ s_{\ed}, \Phi_{\ed}\}_{\ep>0}$ solves the acoustic wave system \eqref{acc-wave} and $ (b, \vU=\vV-\beta \vW) $ is a regular solution to 
the target system \eqref{ane-app} with initial data $\vU_0 := \f{1}{b} \mathbb{P}_b[b \vu_0]$, given by Theorem \ref{thm:LWP},
and $\vV$ and $\vW$ are defined as in Theorem \ref{TH2}.

	We notice that the class specified for the test functions in Lemma \ref{rela-entr-ine} requires certain regularity of $(r_{\ed},\vV_{\ed},\vW_{\ed})$. This is precisely why we introduced the regularization of the data \eqref{acc-wave-data}.
    Although the sign of $r_{\ed}$  is not determined,  for any fixed $\delta$ and for sufficiently small $\ep$, we have
	\begin{align*}
		r_{\ed}= b+ \ep  s_{\ed} >0.
	\end{align*}
    Indeed, from the estimates \eqref{acc-wave-energy} and \eqref{acc-wave-reg}, it follows that
	\begin{align*}
		\sup_{(t,x)\in (0,T)\times \mathbb{R}^3 } |s_{\ep,\delta}(t,x)| \leq c(\delta)   
	\end{align*}
	uniformly with respect to $\ep$. 
	Hence, for fixed $\delta>0$ and for sufficiently small $\ep\in (0,\veps_\delta) $, it holds
	\begin{align}\label{r-ep-del}
    \frac{\underline{b}}{2}\leq   b- \ep c(\delta) \leq r_{\ed} = b + \ep s_{\ep,\delta}\leq b+ \ep c(\delta)\leq 2 \Ov{b}.     
	\end{align}

Now we are in the position of stating the following lemma, which expresses a reduced relative energy inequality.

\begin{Lemma}\label{lemm:REI-ill}
		Let $(\vr_{\ep},\vv_{\ep},\vw_{\ep})$ be a $\kappa$-entropy solution to the degenerate compressible Navier--Stokes equations \eqref{dege-ns},
in the sense of Definition \ref{def-1a}, emanating from the initial datum $(\vr_{0,\ep},\vv_{0,\ep},\vw_{0,\ep})$. Let $( r_{\ed},\vV_{\ed},\vW_{\ed})$ be defined by \eqref{eq:test}. Then, the following inequality holds:
		\small{
					\begin{align}\label{rel_entr13}
			\begin{split}
				& \mathscr{E}( \vr_{\ep},\vv_{\ep},\vw| r_{\ed},\vV_{\ed},\vW)(\tau)   -\mathscr{E}( \vr_{\ep},\vv_{\ep},\vw_{\ep}| r_{\ed},\vV_{\ed},\vW)(0)+\int_0^{\tau} \mathcal{D}_\ep \ds\\ 
				& \quad  \le \int_0^{\tau} \int_{\Om}\vr_{\ep} \left(  \vu_{\ep} -\vU_{\ed}  \right)\cdot \Grad \vW \cdot (\vW-\vw_{\ep})\dx\ds 
				+\int_0^{\tau}  \!\!\!\int_{\Om}\vr_{\ep}  \left(  \vu_{\ep} -\vU_{\ed}  \right)  \cdot \Grad \vV_{\ed} \cdot (\vV_{\ed}-\vv_{\ep})            \dx\ds\\
				& \quad \quad 
				-
				\int_0^{\tau}  \!\!\!\int_{\Om}\vr_{\ep} (\vV_{\ed}-\vv_{\ep})
				\cdot \nabla \Pi  \dx\ds \\
				& \quad \quad 
				-  
				\int_0^{\tau}  \!\!\!\int_{\Om} \vr_{\ep} (\vv_{\ep}-\vV_{\ed}) \lr{ \nabla \Phi_{\ed} \cdot \nabla \vV +\vU \cdot \nabla  \nabla \Phi_{\ed}+ \nabla \Phi_{\ed}\cdot \nabla  \nabla \Phi_{\ed}}   \dx\ds   \\
				& \quad \quad + \int_0^{\tau}  \!\!\!\int_{\Om}\vr_{\ep} (\vW-\vw_{\ep}) \lr{ \nabla \Phi_{\ed} \cdot \nabla \vW} \dx \ds \\
				& \quad \quad -\f{1}{\ep^2} \int_0^{\tau}  \!\!\!\int_{\Om}\lr{ p(\vr_{\ep})-p( r_{\ed}) -  p'( r_{\ed})(\vr_{\ep}- r_{\ed}) } \Div \vU_{\ed}\dx\ds \\
				& \quad \quad 
				+ \f{1}{\ep} \int_0^{\tau}  \!\!\!\int_{\Om}P^{\prime \prime}( r_{\ed} )(\vr_{\ep}- r_{\ed}) \Div\lr{ s_{\ed} \vU_{\ed}} \dx\ds  \\
				&\quad \quad
				-\f{1}{\ep^2} \int_0^{\tau} \int_{\Om}\vr_{\ep}  \Grad \lr{ P^\prime(r_{\ed})-P^\prime(b)-( r_{\ed}-b)P^{\prime \prime}(b)} \cdot (\vv_{\ep}-\vV_{\ed}) \dx\ds \\
				&\quad \quad +2\mu\int_0^{\tau}\!\!\!\int_{\Om}  \lr{\nabla \log b-\nabla\log\vr_\ep} \otimes\vr_{\ep} (\vu_{\ep}-\vU_{\ed}): \Big[ 
				(1-\kappa)\bd \vV  	     -    \sqrt{\kappa (1-\kappa)}  \Grad \vW         \Big]    \dx\ds  \\
				& \quad \quad  +2\mu\kappa\int_0^{\tau} \!\!\!\int_{\Om} \lr{\nabla \log b-\nabla\log\vr_\ep} \otimes\vr_{\ep} (\vv_{\ep}-\vV_{\ed}) :     \ba \vV \dx\dt   \\
				& \quad \quad +  2\mu\sqrt{\kappa (1-\kappa)}  \int_0^{\tau}  \!\!\!\int_{\Om}\lr{\nabla \log b-\nabla\log\vr_\ep}\otimes \vr_{\ep} (\vw_{\ep}-\vW) :   \ba \vV   \dx \ds \\
              &\quad\quad 
              - 2\f{\kappa \mu}{\ep^2}\int_0^{\tau}\!\!\!\int_{\Om}  \vr_{\ep} \Big( p'(\vr_{\ep})-p'( r_{\ed})\Big) \Grad \log  r_{\ed} \cdot (\Grad \log \vr_{\ep}-\Grad \log  r_{\ed}    )           \dx\ds   \\
                      & \quad \quad 
               +\f{2\kappa \mu}{\ep^2} \int_0^{\tau}  \!\!\!\int_{\Om} \vr_{\ep} \lr{ p'( r_{\ed}) \Grad \log r_{\ed} - p'( \vr_{\ep}) \Grad \log \vr_{\ep}}\cdot \left( \nabla \log r_{\ed} -\nabla \log b \right)           \dx\ds,
			\end{split}
		\end{align}
        where $\mathcal{D}_\ep$ is the dissipation term given by 
        \eqh{
        \mathcal{D}_\ep &
        =
        2 \kappa \mu  \int_{\Om}  \left| \Ov{ \sqrt{\vr_{\ep}}  \, \ba(\vv_{\ep}-\vV_{\ed}) } \right|^2 \dx
                + 2\mu \int_{\Om}  \left| \Ov{ \sqrt{\vr_{\ep}} \,\bd \left( \sqrt{1-\kappa}(\vv_{\ep}-\vV_{\ed})-\sqrt{\kappa}(\vw-\vW)   \right)         } \right|^2       \dx                    \\
				&  \quad 
                + 2\f{\kappa \mu}{\ep^2} \int_{\Om}  \vr_{\ep}  p'(\vr_{\ep}) \vert \Grad \log \vr_{\ep}-\Grad \log  r_{\ed} \vert^2    \dx . 
        }
	}	
	\end{Lemma}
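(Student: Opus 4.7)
The plan is to start from the general relative $\kappa$-entropy inequality \eqref{relEnt:7} with the specific test triple $(r_\ed, \vV_\ed, \vW_\ed)$ given by \eqref{eq:test}. Because for $\ep$ small enough with $\delta$ fixed one has $\underline{b}/2 \leq r_\ed \leq 2\oline{b}$ by \eqref{r-ep-del}, and the regularity \eqref{acc-wave-reg} of $(s_\ed, \Phi_\ed)$ guarantees that the test functions satisfy the smoothness hypotheses of Lemma \ref{rela-entr-ine}, this substitution is legitimate. The argument then splits naturally into three blocks: (i) the time-derivative and convective terms of the momentum tested with $\vV_\ed$, $\vW_\ed$, (ii) the pressure/continuity terms involving $r_\ed$, and (iii) the viscous bared terms, which will be handled exactly as in the well-prepared case of Subsection \ref{subs:well}.

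For block (i), since $\vW_\ed = \vW$ is independent of the acoustic correction and $\partial_t\vW=0$, the $\vW$-terms are straightforward. For $\vV_\ed = \vV + \nabla\Phi_\ed$, I will use $\partial_t\vV_\ed = \partial_t\vV + \nabla\partial_t\Phi_\ed$, replacing $\partial_t\vV + \vU\cdot\nabla\vV$ by $-\nabla\Pi$ plus viscous terms via the target equation \eqref{target_new}, and using the second equation of the acoustic wave system \eqref{acc-wave} to write $\ep\,\nabla\partial_t\Phi_\ed = -\nabla(s_\ed P''(b))$. Cross-terms of the form $\vU_\ed\cdot\nabla\vV_\ed$ expand into $\vU\cdot\nabla\vV + \vU\cdot\nabla\nabla\Phi_\ed + \nabla\Phi_\ed\cdot\nabla\vV + \nabla\Phi_\ed\cdot\nabla\nabla\Phi_\ed$, producing exactly the $\nabla\Phi_\ed$-terms appearing in the statement. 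The pressure gradient contribution $-\ep^{-2}\int\vr_\ep\,\nabla P'(r_\ed)\cdot(\vv_\ep-\vV_\ed)$ will combine with $\ep^{-1}\int\vr_\ep\nabla(s_\ed P''(b))\cdot(\vv_\ep-\vV_\ed)$ coming from $\partial_t\nabla\Phi_\ed$, and Taylor expansion $P'(r_\ed) = P'(b) + \ep s_\ed P''(b) + O(\ep^2)$ together with $\nabla P'(b) = \nabla G$ (see \eqref{eq:b-second}) will make the $\nabla G$ contribution $I_{10}$ cancel up to the quadratic remainder $\nabla\bigl(P'(r_\ed) - P'(b) - (r_\ed - b)P''(b)\bigr)$.

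For block (ii), the key observation is that $(r_\ed, \vU_\ed)$ does \emph{not} satisfy the continuity equation exactly, so the reduction from Section \ref{ss:improved} cannot be used verbatim. Instead, from $\partial_t b = 0$, from \eqref{acc-wave}$_1$ giving $\partial_t r_\ed = -\Div(b\nabla\Phi_\ed)$, and from $\Div(b\vU)=0$, one obtains the defect identity
\begin{equation*}
\partial_t r_\ed + \Div(r_\ed \vU_\ed) = \ep\,\Div(s_\ed \vU_\ed),
\end{equation*}
which, inserted into the term $I_4$ reorganized as in \eqref{ill-rei1}, produces the $\ep^{-1}\int P''(r_\ed)(\vr_\ep - r_\ed)\Div(s_\ed\vU_\ed)$ correction appearing in the lemma. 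The remaining pressure-type terms $I_3, I_5, I_6, I_9$ are grouped exactly as in \eqref{ill-rei1}, and the last two lines of \eqref{relEnt:7} — involving $p'(r_\ed)\nabla\log r_\ed$ tested against $\nabla\log r_\ed - \frac{1}{2\mu\sqrt{\kappa(1-\kappa)}}\vW_\ed$ and the cross-term from \eqref{eq:log-rho} — are rewritten using $\vW = 2\mu\sqrt{\kappa(1-\kappa)}\nabla\log b$, which produces the two correction lines at the bottom of \eqref{rel_entr13} involving $\nabla\log r_\ed - \nabla\log b$.

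For block (iii), the bared viscous terms $I_7, I_8, I_{11}$ in \eqref{relEnt:7} combined with the viscous terms of the target equation \eqref{target_new} (which provides $b^{-1}\Div(\ldots)$ in $\partial_t\vV$) are treated by repeating, almost verbatim, the manipulations \eqref{v-w-eqn}--\eqref{z} of the well-prepared case: integration by parts via Remark \ref{Rem:2.1}, cancellation of the $Z_{1,b}, Z_{2,b}$ contributions with $R_7, R_8$ in the present notation, and regrouping of the remaining pieces into a sum controlled by $\nabla\log b - \nabla\log\vr_\ep$. This produces the three bared $\nabla\log b - \nabla\log\vr_\ep$ lines of \eqref{rel_entr13}. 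Collecting all surviving terms and keeping the full dissipation $\mathcal{D}_\ep$ on the left-hand side then yields the claimed inequality. The main bookkeeping obstacle is ensuring that the various identities and integrations by parts — in particular the Helmholtz-type splitting and the two-velocity viscous manipulations — remain valid with the $\nabla\Phi_\ed$-correction in $\vV_\ed$; however, no new analytical ingredient beyond those of Subsection \ref{subs:well} is required, only careful expansion of the extra terms produced by the acoustic correction.
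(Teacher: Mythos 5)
Your proposal follows essentially the same route as the paper's proof: start from the general relative $\kappa$-entropy inequality \eqref{relEnt:7} with the acoustic-corrected test triple \eqref{eq:test}, exploit the defect identity $\partial_t r_{\ed}+\Div(r_{\ed}\vU_{\ed})=\ep\,\Div(s_{\ed}\vU_{\ed})$, cancel the $\ep\,\partial_t\nabla\Phi_{\ed}$ contribution against the $\Grad G-\Grad P'(r_{\ed})$ term using \eqref{acc-wave}$_2$ and $\Grad P'(b)=\Grad G$, expand $\partial_t\vV_{\ed}+\vU_{\ed}\cdot\nabla\vV_{\ed}$ as in \eqref{ip1}--\eqref{ip2}, substitute the target equations, and finally repeat the well-prepared viscous manipulations \eqref{v-w-eqn}--\eqref{z} to produce the $\nabla\log b-\nabla\log\vr_\ep$ terms. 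The index bookkeeping is only sketched and a couple of the $I_j$ labels are slightly off, but all the essential ideas and the order of operations coincide with the paper's argument.
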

    For a later purpose, we denote the right-hand side of \eqref{rel_entr13} as $\sum_{i=1}^{13} \mathcal{I}_i$.

{\emph {Proof of Lemma \ref{lemm:REI-ill}.}} Making use of the acoustic wave equations \eqref{acc-wave}, we deduce the following identities: 
	\begin{align}\label{h3}
		\begin{split}
			&
			\partial_t r_{\ed} + \Div( r_{\ed} \vU_{\ed})= 
            \underbrace{
            {\ep} \partial_t  s_{\ed} + \Div\lr{ b \nabla \Phi_{\ed}}
            }_{=0} + \ep \Div \lr{  s_{\ed} \mathbf{U}_\ed } = \ep   \Div \lr{  s_{\ed} \mathbf{U}_\ed },   
			\\
			&\vU_{\ed}= \vV_{\ed}-\sqrt{\frac{\kappa}{1-\kappa}}\vW_\ed= \vU +\nabla \Phi_{\ed} , \\
			&\vW_\ed= \vW =2\sqrt{\kappa(1-\kappa)} \, \mu \nabla \log b, 
		\end{split}
	\end{align} 
	where we also used the property $\Div (b \vU)=0$. Before going on, we notice that 
	\begin{align*}
		\ba \vW=\mathbf{0} \quad
        \text{ and } 
        \quad 
        \ba \vV_{\ed}=\ba \lr{\vV+ \nabla \Phi_{\ed}}= \ba \vV.  
	\end{align*}
Plugging the relation \eqref{h3} into \eqref{relEnt:7}, we obtain
	 {\small{
			\begin{align}\label{rel-ent-ip-2}
				\begin{split}
					& \mathscr{E}( \vr_{\ep},\vv_{\ep},\vw_{\ep}| r_{\ed},\vV_{\ed},\vW)(\tau)  -\mathscr{E}( \vr_{\ep},\vv_{\ep},\vw_{\ep}| r_{\ed},\vV_{\ed},\vW)(0) +\int_0^{\tau} \mathcal{D}_\ep \ds\\
					& \quad  \leq    \int_0^{\tau} \int_{\Om}\vr_{\ep} \left(  \vu_{\ep} -\vU_{\ed}  \right)\cdot \Grad \vW \cdot (\vW-\vw_{\ep})\dx\ds 
					+\int_0^{\tau}  \!\!\!\int_{\Om}\vr_{\ep}  \left(  \vu_{\ep} -\vU_{\ed}  \right)  \cdot \Grad \vV_{\ed} \cdot (\vV_{\ed}-\vv_{\ep})            \dx\ds\\
					& \quad \quad + \int_0^{\tau}  \!\!\!\int_{\Om}\vr_{\ep} (\vW-\vw_{\ep}) \lr{ \p_t \vW+ \vU_{\ed} \cdot \nabla \vW} \dx \ds 
					\int_0^{\tau}  \!\!\!\int_{\Om} \vr_{\ep} (\vv_{\ep}-\vV_{\ed}) \lr{ \p_t \vV_{\ed}+\vU_{\ed} \cdot \nabla \vV_{\ed}}   \dx\ds
					\\
					& \quad \quad -\f{1}{\ep^2} \int_0^{\tau}  \!\!\!\int_{\Om}
                    \Big(
                    p(\vr_{\ep})-p( r_{\ed}) -  p'( r_{\ed})(\vr_{\ep}- r_{\ed})
                    \Big)   
                    \Div \vU_{\ed}\dx\ds \\
					&\quad \quad 
					+{ \f{1}{\ep} \int_0^{\tau}  \!\!\!\int_{\Om}P^{\prime \prime}( r_{\ed} )(\vr_{\ep}- r_{\ed}) \Div\lr{ s_{\ed} \vU_{\ed}}\dx\ds }
					\\
					&\quad \quad + 
					\f{1}{\ep^2} \int_0^{\tau} \int_{\Om}\vr_{\ep} \lr{\Grad G- \Grad P^\prime( r_{\ed})} \cdot (\vv_{\ep}-\vV_{\ed}) \dx\ds
					+ 2\ka \mu \int_0^{\tau}  \!\!\!\int_{\Om} \sqrt{\vr_{\ep}} \ba  \vV: \Ov{\sqrt{\vr_{\ep}} \ba(\vV-\vv_{\ep})} \dx\ds  \\
					& \quad \quad + 2\mu \int_0^{\tau}  \!\!\!\int_{\Om} \sqrt{ \vr_{\ep} }\Big(  \bd(\sqrt{1-\kappa} \vV_{\ed})-
                    \bd
                    ( \sqrt{\ka} \vW)     \Big): \Ov{\sqrt{\vr_{\ep}}\Big(  \bd(\sqrt{1-\kappa} (\vV_{\ed}-\vv_{\ep}))-\bd
                    ( \sqrt{\ka} (\vW-\vw_{\ep}))     \Big)  }     \dx\ds \\
                    &\quad\quad 
                   - 2\f{\kappa \mu}{\ep^2}\int_0^{\tau}\!\!\!\int_{\Om}  \vr_{\ep} \Big( p'(\vr_{\ep})-p'( r_{\ed})\Big) \Grad \log  r_{\ed} \cdot (\Grad \log \vr_{\ep}-\Grad \log  r_{\ed}    )           \dx\ds  \\
                    & \quad \quad 
                   +
                    \f{2\kappa \mu}{\ep^2} \int_0^{\tau}  \!\!\!\int_{\Om} \vr_{\ep} \lr{ p'( r_{\ed}) \Grad \log r_{\ed} - p'( \vr_{\ep}) \Grad \log \vr_{\ep}}\cdot \left( \nabla \log r_{\ed} -\nabla \log b \right)           \dx\ds \\
					&\quad =:\sum_{i=1}^{11} \widetilde{I}_i.
				\end{split}
	\end{align}  }}
	For the term $ \widetilde{I}_7$ in the above inequality, we observe that 
	\begin{align*} 
		&   \f{1}{\ep^2} \int_0^{\tau} \int_{\Om}\vr_{\ep} \lr{\Grad G- \Grad P^\prime( r_{\ed})} \cdot (\vv_{\ep}-\vV_{\ed}) \dx\ds  \\
		&\quad =-\f{1}{\ep^2} \int_0^{\tau} \int_{\Om}\vr_{\ep}  \Grad \lr{ P^\prime( r_{\ed})-P^\prime(b)-( r_{\ed}-b)P^{\prime \prime}(b)} \cdot (\vv_{\ep}-\vV_{\ed}) \dx\ds \\
		&\qquad \quad -\f{1}{\ep^2} \int_0^{\tau} \int_{\Om}\vr_{\ep}  \Grad \lr{ ( r_{\ed}-b)P^{\prime \prime}(b)} \cdot (\vv_{\ep}-\vV_{\ed}) \dx\ds \\ 
		&\quad =-\f{1}{\ep^2} \int_0^{\tau} \int_{\Om}\vr_{\ep}  \Grad \lr{ P^\prime( r_{\ed})-P^\prime(b)-( r_{\ed}-b)P^{\prime \prime}(b)} \cdot (\vv_{\ep}-\vV_{\ed}) \dx\ds \\
		&\qquad \quad -\f{1}{\ep} \int_0^{\tau} \int_{\Om}\vr_{\ep}  \Grad \lr{  s_{\ed} P^{\prime \prime}(b)} \cdot (\vv_{\ep}-\vV_{\ed}) \dx\ds .   
	\end{align*}
	Further, summing the above equality with $ \widetilde{I}_4$, with the help of \eqref{eq:test}$_2$ and \eqref{acc-wave}$_2$, we obtain
	\begin{align*}
		&  \f{1}{\ep^2} \int_0^{\tau} \int_{\Om}\vr_{\ep} \lr{\Grad G- \Grad P^\prime( r_{\ed})} \cdot (\vv_{\ep}-\vV_{\ed}) \dx\ds 
		-   
		\int_0^{\tau}  \!\!\!\int_{\Om} \vr_{\ep} (\vv_{\ep}-\vV_{\ed}) \lr{ \p_t \vV_{\ed}+\vU_{\ed} \cdot \nabla \vV_{\ed}}   \dx\ds \\
		&\quad 
		=
		-\f{1}{\ep^2} \int_0^{\tau} \int_{\Om}\vr_{\ep}  \Grad \lr{ P^\prime( r_{\ed})-P^\prime(b)-( r_{\ed}-b)P^{\prime \prime}(b)} \cdot (\vv_{\ep}-\vV_{\ed}) \dx\ds \\
		&\quad \qquad 
		-   
		\int_0^{\tau}  \!\!\!\int_{\Om} \vr_{\ep} (\vv_{\ep}-\vV_{\ed}) \lr{ \p_t \vV+\vU_{\ed} \cdot \nabla \vV_{\ed}}   \dx\ds	\\
		&\quad \qquad
		\underbrace{-\f{1}{\ep} \int_0^{\tau} \int_{\Om}\vr_{\ep} \lr{ \ep \partial_t \nabla \Phi_{\ed} + \Grad \lr{  s_{\ed} P^{\prime \prime}(b)} }\cdot (\vv_{\ep}-\vV_{\ed}) \dx\ds }_{=0}.
	\end{align*}

	With this, we can rewrite \eqref{rel-ent-ip-2} as \small{
	\begin{align}\label{rel-ent-ip-3}
		\begin{split}
			 &\mathscr{E}( \vr_{\ep},\vv_{\ep},\vw_{\ep}| r_{\ed},\vV_{\ed},\vW)(\tau)   - \mathscr{E}( \vr_{\ep},\vv_{\ep},\vw_{\ep}| r_{\ed},\vV_{\ed},\vW)(0) +\int_0^{\tau} \mathcal{D}_\ep \ds\\
			& 
            \leq \int_0^{\tau} \int_{\Om}\vr_{\ep} \left(  \vu_{\ep} -\vU_{\ed}  \right)\cdot \Grad \vW \cdot (\vW-\vw_{\ep})\dx\ds 
			+\int_0^{\tau}  \!\!\!\int_{\Om}\vr_{\ep}  \left(  \vu_{\ep} -\vU_{\ed}  \right)  \cdot \Grad \vV_{\ed} \cdot (\vV_{\ed}-\vv_{\ep})            \dx\ds\\
			& \quad  + \int_0^{\tau}  \!\!\!\int_{\Om}\vr_{\ep} (\vW-\vw_{\ep}) \cdot \lr{ \p_t \vW+ \vU_{\ed} \cdot \nabla \vW} \dx \ds 
			-   
			\int_0^{\tau}  \!\!\!\int_{\Om} \vr_{\ep} (\vv_{\ep}-\vV_{\ed})
            \cdot 
            \lr{ \p_t \vV+\vU_{\ed} \cdot \nabla \vV_{\ed}}   \dx\ds \\
			& \quad  -\f{1}{\ep^2} \int_0^{\tau}  \!\!\!\int_{\Om}
            \Big( 
            p(\vr_{\ep})-p(r_{\ed}) -  p'( r_{\ed})(\vr_{\ep}- r_{\ed}) 
            \Big) 
            \Div \vU_{\ed}\dx\ds \\
			& \quad 
			+ \f{1}{\ep} \int_0^{\tau}  \!\!\!\int_{\Om}P^{\prime \prime}( r_{\ed} )(\vr_{\ep}- r_{\ed}) \Div\lr{ s_{\ed} \vU_{\ed}} \dx\ds  \\
			&\quad
			-\f{1}{\ep^2} \int_0^{\tau} \int_{\Om}\vr_{\ep}  \Grad \lr{ P^\prime(r_{\ed})-P^\prime(b)-( r_{\ed}-b)P^{\prime \prime}(b)} \cdot (\vv_{\ep}-\vV_{\ed}) \dx\ds \\
			&\quad + 2\ka \mu \int_0^{\tau}  \!\!\!\int_{\Om} \sqrt{\vr_{\ep}} \ba  \vV: \Ov{ \sqrt{\vr_{\ep}} \ba(\vV-\vv_{\ep})} \dx\ds  \\
		& \quad  + 2\mu \int_0^{\tau}  \!\!\!\int_{\Om} \sqrt{ \vr_{\ep} }\Big(  \bd(\sqrt{1-\kappa} \vV_{\ed})-\bd( \sqrt{\ka} \vW)     \Big): \Ov{\sqrt{\vr_{\ep}}\Big(  \bd(\sqrt{1-\kappa} (\vV_{\ed}-\vv_{\ep}))-\bd( \sqrt{\ka} (\vW-\vw_{\ep}))     \Big)  }     \dx\ds \\
        &\quad
        - 2\f{\kappa \mu}{\ep^2}\int_0^{\tau}\!\!\!\int_{\Om}  \vr_{\ep} \Big( p'(\vr_{\ep})-p'( r_{\ed})\Big) \Grad \log  r_{\ed} \cdot (\Grad \log \vr_{\ep}-\Grad \log  r_{\ed}    )           \dx\ds \\
    & \quad 
                  +
                    \f{2\kappa \mu}{\ep^2} \int_0^{\tau}  \!\!\!\int_{\Om} \vr_{\ep} \lr{ p'( r_{\ed}) \Grad \log r_{\ed} - p'( \vr_{\ep}) \Grad \log \vr_{\ep}}\cdot \left( \nabla \log r_{\ed} -\nabla \log b \right)           \dx\ds \\
		&=:\sum_{i=1}^{11} \widetilde{J}_i.
		\end{split}
	\end{align}  }

To proceed, we consider the terms $ \widetilde{J}_3$ and $ \widetilde{J}_4$: using \eqref{h3}$_2$ and \eqref{eq:test}$_2$, we can write 
	\begin{align}\label{ip1}
		{ \p_t \vW+ \vU_{\ed} \cdot \nabla \vW}=& \lr{\p_t \vW+ \vU \cdot \nabla \vW}+ \nabla \Phi_{\ed}\cdot \nabla \vW ,  \\
		\label{ip2}		
{ \p_t \vV+ \vU_{\ed} \cdot \nabla \vV_{\ed}}=& \lr{\p_t \vV+ \vU \cdot \nabla \vV}  + \lr{ \nabla \Phi_{\ed} \cdot \nabla \vV +\vU \cdot \nabla  \nabla \Phi_{\ed}+ \nabla \Phi_{\ed}\cdot \nabla  \nabla \Phi_{\ed}} .
	\end{align}

	Recall that $(b, \vU)$ is the strong solution to the target system \eqref{ane-app}, thus $\vV$ and $\vW$ satisfy the relations \eqref{target_new}$_2$ and \eqref{target_new}$_3$, respectively. Therefore, we have
	\begin{align*}
		& \int_0^{\tau}  \!\!\!\int_{\Om} \vr_{\ep} (\vv_{\ep}-\vV_{\ed})\cdot  \lr{ \p_t \vV+\vU \cdot \nabla \vV}   \dx\ds +	\int_0^{\tau}  \!\!\!\int_{\Om}\vr_{\ep} (\vw_{\ep}-\vW) \cdot \lr{ \p_t \vW+ \vU \cdot \nabla \vW} \dx \ds  \\
		&\quad 
        = 
        -\int_0^{\tau}  \!\!\!\int_{\Om} \vr_{\ep} (\vv_{\ep}-\vV_{\ed}) \cdot \Grad \Pi \dx\ds \\ 
		&\quad \quad  
        +\int_0^{\tau}\!\!\!\int_{\Om}   \vr_{\ep} (\vv_{\ep}-\vV_{\ed}) \cdot b^{-1}  \cdot \Big[ 
		\mu \Div (2 b (1-\kappa)\bd \vV ) + \mu \Div (2\kappa b \ba \vV) 	     -\mu \Div \Big(   2\sqrt{\kappa (1-\kappa)} b \Grad \vW        \Big) \Big]    \dx\ds \\
		&\quad \quad  
        +\int_0^{\tau}\!\!\! \int_{\Om} \vr_{\ep} (\vw_{\ep}-\vW)\cdot b^{-1} \cdot \left[ \mu \Div (2\kappa b \Grad \vW)
		-\mu \Div \Big( 2\sqrt{\kappa (1-\kappa)}   b \Grad^{t}\vV \Big) \right]  \dx \ds .
\end{align*}
For the last two terms in this equality, we again would like to integrate by parts, which cannot be justified rigorously straight away. However, we might basically repeat the computations \eqref{v-w-eqn}-\eqref{z} from the previous section, to cancel out terms $ \widetilde{J}_8$ and $ \widetilde{J}_9$, and to reduce \eqref{rel-ent-ip-3} to the form \eqref{rel_entr13}. \qed

	\subsection{The strategy of the proof of Theorem \ref{TH2}}\label{subs:ill-p}

	Now, we are ready to prove Theorem \ref{TH2}. To this end, we must estimate the right-hand side of the following relative energy inequality: 
					\begin{align}\label{rel_entr13short}
			\begin{split}
				& \mathscr{E}( \vr_{\ep},\vv_{\ep},\vw| r_{\ed},\vV_{\ed},\vW)(\tau)   -\mathscr{E}( \vr_{\ep},\vv_{\ep},\vw_{\ep}| r_{\ed},\vV_{\ed},\vW)(0)+\int_0^{\tau} \mathcal{D}_\ep \ds
                 \leq \sum_{i=1}^{13} \mathcal{I}_i,  
			\end{split}
		\end{align}
     where the terms $\mathcal{I}_{1},\ldots, \mathcal{I}_{13}$ are specified in \eqref{rel_entr13}. 
 Our aim is to show that each of these terms is small or can be controlled by the left-hand side using a Gr\"{o}nwall-type argument.

 To do it, we introduce the following notations for some generic constants, functions and reminder terms:
	\begin{enumerate}
	
	\item $0 < C(\mathcal{P})$ is a constant that does not depend on $\delta$, $\ep$, or $\tau$, but depends on the set of parameters $\mathcal{P}$.
	
	\item $0 < \chi(\t)$ is a function that does not depend on $\ep$ and $\delta$, but depends on $\tau \in (0, T_\ast)$, and satisfies $\int_0^\tau \chi(s)\, \mathrm{d}s < \infty$.
	
	\item $\chi_{\ep}( \delta,\t)$ is a nonnegative function that satisfies  
	\begin{align}\label{gen-const-1}
\forall\,\t\in(0,T_\ast),\qquad		\chi_{\ep}(\delta, \cdot) \to 0 \quad \text{in } L^1(0, \tau) \quad \text{ as } \quad  \ep \to 0.
	\end{align}

        \item \( R_\ep(t) \) is a remainder term, in the sense defined in \eqref{eq:remainder}; this means that
\begin{equation} \label{eq:remainder:ill}
	\forall\, T < T_\ast, \qquad \lim_{\ep \to 0} \sup_{t \in [0, T]} R_\ep(t) = 0,
\end{equation}
where \( T_\ast > 0 \) denotes the time of existence of the smooth solution to the target problem \eqref{ane-app}.
	\end{enumerate}

From the estimates performed in the following section it will follow that
	\begin{align}\label{ill:out:gron}
    \begin{split}
		&\mathscr{E}( \vr_\ep,\vv_\ep,\vw_\ep| r_{\ep,\delta},\vV_{\ep,\delta},\vW_{\ep,\delta})(\tau)\\
		& \quad 
        \leq \lr{\mathscr{E}( \vr_{0,\ep},\vv_{0,\ep},\vw_{0,\ep}| r_{0,\ep,\delta},\vV_{0,\ep,\delta},\vW_{0,\ep,\delta}) +\sup_{t \in [0, \t]} R_\ep(t) 
			+ \int_0^\tau 
			\chi_{\ep}  (\delta,s)  \ds  } \\
		& \qquad  
		+
		\int_0^\tau \Big(\chi(s) +\chi_{\ep}(\delta,s)  \Big)  \mathscr{E}( \vr,\vv_{\ep},\vw_{\ep}| r_{\ed},\vV_{\ed},\vW_{\ed})(s) \ds, 
    \end{split}
	\end{align}
for a suitable pair $(\ep,\delta)$, from which we will want to deduce: 
    \begin{align}\label{ill:out:main}
        \lim_{\delta \rightarrow 0} \limsup_{\ep\rightarrow 0} \mathscr{E}( \vr_\ep,\vv_\ep,\vw_\ep| r_{\ep,\delta},\vV_{\ep,\delta},\vW_{\ep,\delta})(\tau)=0.
    \end{align}

The choice of suitable dependency between $\delta$ and $\ep$ that allows us to pass to the limit follows the general idea discussed already before the derivation of \eqref{r-ep-del}. That is, for a fixed but sufficiently small $\delta$ we will choose correspondingly small $\ep$. More precisely:
\begin{itemize}
    \item We first fix  $\eta>0$, and choose $\delta_0>0$ such that for $0<\delta\leq \delta_0$, we have 
		\begin{align}\label{ill-ep-del1}
			\mathscr{E}( \vr_{0,\ep},\vv_{0,\ep},\vw_{0,\ep}| r_{0,\ep,\delta},\vV_{0,\ep,\delta},\vW_{0,\ep,\delta}) \leq \eta. 
		\end{align}
        \item Then, for any fixed $\delta\in(0,\delta_0]$, from \eqref{r-ep-del}, \eqref{gen-const-1} and \eqref{eq:remainder:ill}, we find $\ep_\delta > 0$ 
        such that, for all $0 < \ep < \ep_\delta$, we have
		\begin{align}\label{ill-ep-del2}
        &  \frac{\underline{b}}{2} \leq r_{\ed}   \leq 2\Ov{b} , \quad 
        \int_0^\tau \chi_{\ep}(s,\delta) \ds \leq \eta , \quad 
        \sup_{t \in [0, \t]} R_\ep(t) \leq \eta.
		\end{align}
\end{itemize}
After this suitable choice of $\ep$ and $\delta$,  application of Gr\"{o}nwall’s inequality  to \eqref{ill:out:gron} implies
	\begin{align*}
		&\mathscr{E}( \vr_\ep,\vv_\ep,\vw_\ep| r_{\ep,\delta},\vV_{\ep,\delta},\vW_{\ep,\delta})(\tau) \\
		&\quad 
        \leq 
        \lr{
			\mathscr{E}( \vr_{0,\ep},\vv_{0,\ep},\vw_{0,\ep}| r_{0,\ep,\delta},\vV_{0,\ep,\delta},\vW_{0,\ep,\delta})+\sup_{t \in [0, \t]} R_\ep(t) 
			+ \int_0^\tau 
			\chi_{\ep}  (\delta,s)  \ds  } \times 
		\exp \lr{ \int_0^\tau \Big(\chi(s)+ \chi_{\ep}(s,\delta)\Big) \ds } .
	\end{align*}
Hence, from \eqref{ill-ep-del1} and \eqref{ill-ep-del2}, for $0< \ep<\ep_0$, we obtain 
		\begin{align*}
			\mathscr{E}( \vr_\ep,\vv_\ep,\vw_\ep| r_{\ep,\delta},\vV_{\ep,\delta},\vW_{\ep,\delta})(\tau) \leq 3\eta \exp\big(C T \lr{\eta+1}\big) \quad  \text{ for a.e. }\tau \in (0,T).
		\end{align*} 
Therefore, performing first the limit $\ep \to 0$ with $\delta$ fixed, and then letting $\delta \to 0$, allows us to get \eqref{ill:out:main}. 
In what follows, we stick to this suitable choice of $(\ep,\delta)$, without specifying it each time when it is used.

	\subsection{The relative entropy estimate}
    In this section we estimate the right-hand side of \eqref{rel_entr13}.
    As in Subsection \ref{subs:well}, we will write dependence of the arising multiplicative constants in terms of suitable norms of the test functions $\vV$ and $\vW$;
    however, they can be easily estimated in terms of similar norms of the target profiles $\vU$ and $b$. In addition, 
    we will use the following notation:
	\begin{align}\label{vrep1}
		\vr_\ep^{(1)} = \frac{\vr_\ep-b}{\ep} .
	\end{align}
	\paragraph{Estimate of terms $\mathcal{I}_1$ and $\mathcal{I}_2$:} Let us first recall the relation 
	\eq{\label{100UVW}
	\vU = \vV - \sqrt{\frac{\kappa}{1 - \kappa}}\, \vW = \vV - \beta \vW,
	}
	and a similar one  for the weak solution 
	\eq{\label{100uvw}
	\sqrt{\vr_\ep}\, \vu_\ep = \sqrt{\vr_\ep}\, \vv_\ep - \beta \sqrt{\vr_\ep}\, \vw_\ep.
	}
	Therefore, we have
	\begin{align*}
		|\mathcal{I}_1| &= \left\vert	\int_0^{\tau} \int_{\Om}\vr_{\ep} \left(  \vu_{\ep} -\vU_{\ed}  \right)\cdot \Grad \vW \cdot (\vW-\vw_{\ep})\dx\ds \right\vert \\
		&= \left\vert	\int_0^{\tau} \int_{\Om} \Big(   \sqrt{\vr_\ep}\lr{\vv_{\ep} -\vV_{\ed}}-\beta  \sqrt{\vr_\ep}\lr{\vw_{\ep}- \vW}  \Big)\cdot \Grad \vW \cdot  \sqrt{\vr_\ep}(\vW-\vw_{\ep})\dx\ds \right\vert \\
		&\leq  C \int_0^{\tau} \| \nabla \vW \|_{L^\infty_x} \lr{\int_{\Om}\vr_{\ep}  | \vv_{\ep}-\vV_{\ed}|^2 \dx } \ds +C \int_0^{\tau} \|\nabla \vW \|_{L^\infty_x} \lr{\int_{\Om}\vr_{\ep}  | \vw_{\ep}-\vW|^2 \dx }\ds \\
		&\leq  \int_0^\tau \chi(s) \mathscr{E}( \vr_{\ep},\vv_{\ep},\vw_{\ep}| r_{\ed},\vV_{\ed},\vW)(s) \ds ,
	\end{align*} 
	where $\chi \in L^1(0,\tau)$ depends only on $\|\nabla \vW\|_{L^1_t L^\infty_x} = t \ C(\mu,\k) \ \left\|\nabla^2\log b\right\|_{L^\infty}$.

	By a similar argument, we estimate $\mathcal{I}_2 $ as 
	\begin{align*}
		|\mathcal{I}_2| & =
        \left\vert \int_0^{\tau}  \!\!\!\int_{\Om}\vr_{\ep}  \left(  \vu_{\ep} -\vU_{\ed}  \right)  \cdot \Grad \vV_{\ed} \cdot (\vV_{\ed}-\vv_{\ep})            \dx\ds\right\vert\\
		& 
        \leq \left\vert \int_0^{\tau}  \!\!\!\int_{\Om}\vr_{\ep}  \left(  \vu_{\ep} -\vU_{\ed}  \right)  \cdot \Grad \vV \cdot (\vV_{\ed}-\vv_{\ep})            \dx\ds\right\vert+ \left\vert \int_0^{\tau}  \!\!\!\int_{\Om}\vr_{\ep}  \left(  \vu_{\ep} -\vU_{\ed}  \right)  \cdot \Grad^2 \Phi_{\ed} \cdot (\vV_{\ed}-\vv_{\ep})            \dx\ds\right\vert \\
		&
        \leq  C \int_0^{\tau} \| \nabla \vV \|_{L^\infty_x} \lr{\int_{\Om}\vr_{\ep}  | \vv_{\ep}-\vV_{\ed}|^2 \dx} \ds +C \int_0^{\tau} \|\nabla \vV \|_{L^\infty_x} \lr{\int_{\Om}\vr_{\ep}  | \vw_{\ep}-\vW|^2 \dx }\ds \\ 
		&\quad 
        + C \int_0^{\tau} \| \nabla^2 \Phi_{\ed} \|_{L^\infty_x} \lr{\int_{\Om}\vr_{\ep}  | \vv_{\ep}-\vV_{\ed}|^2 \dx} \ds
		+
		C \int_0^{\tau} \|\nabla^2 \Phi_{\ed} \|_{L^\infty_x}\lr{ \int_{\Om}\vr_{\ep}  | \vw_{\ep}-\vW|^2 \dx} \ds \\
		& 
        \leq  \int_0^\tau \chi(s)   \mathscr{E}( \vr_{\ep},\vv_{\ep},\vw_{\ep}| r_{\ed},\vV_{\ed},\vW)(s) \ds + C \int_0^\tau  \|\nabla^2 \Phi_{\ed} (s)\|_{L^\infty_x} \mathscr{E}( \vr_{\ep},\vv_{\ep},\vw_{\ep}| r_{\ed},\vV_{\ed},\vW)(s) \ds.
	\end{align*}

	It follows from the dispersive estimate \eqref{acc-wave-dis} that, for any $\delta>0$ fixed, one has
	\begin{align*}
    \|\nabla^2 \Phi_{\ed} (s)\|_{L^1_tL^\infty_x} \rightarrow 0  \quad 
    \quad  \text{ as } \ep \rightarrow 0 .
    \end{align*}
	Hence, by recalling the notations introduced in \eqref{gen-const-1}, we have 
	\begin{align*}
		|\mathcal{I}_2|  \leq  \int_0^\tau \chi(s)  \mathscr{E}( \vr_{\ep},\vv_{\ep},\vw_{\ep}| r_{\ed},\vV_{\ed},\vW)(s) \ds +  \int_0^\tau \chi_{\ep}(s,\delta) \mathscr{E}( \vr_{\ep},\vv_{\ep},\vw_{\ep}| r_{\ed},\vV_{\ed},\vW)(s) \ds .
	\end{align*}
	Summing the two estimates up, it holds
	\begin{align}\label{i1-i2}
		|\mathcal{I}_1| + |\mathcal{I}_2|  \leq   \int_0^\tau \chi(s)  \mathscr{E}( \vr_{\ep},\vv_{\ep},\vw_{\ep}| r_{\ed},\vV_{\ed},\vW)(s) \ds+  \int_0^\tau \chi_{\ep}(s,\delta) \mathscr{E}( \vr_{\ep},\vv_{\ep},\vw_{\ep}| r_{\ed},\vV_{\ed},\vW)(s) \ds .
	\end{align}

	\paragraph{Estimate of term $\mathcal{I}_3$:} Using relation \eqref{h3}$_2$, we get 
	\begin{align*}
		\vV_{\ed}= \vU+ \beta \vW+ \nabla \Phi_{\ed} .
	\end{align*}
	This, together with \eqref{100uvw}, enables us to rewrite $\mathcal{I}_3$ in the following form: 
	\begin{align*}
		\mathcal{I}_3&=\int_0^{\tau}  \!\!\!\int_{\Om}\vr_{\ep} (\vV_{\ed}-\vv_{\ep}) \cdot \nabla \Pi  \dx\ds \\
		&=\int_0^{\tau}  \!\!\!\int_{\Om}\vr_{\ep} (\vU-\vu_{\ep}) \cdot\nabla \Pi  \dx\ds+ \beta \int_0^{\tau}  \!\!\!\int_{\Om}\vr_{\ep} (\vW-\vw_{\ep}) \cdot \nabla \Pi  \dx\ds + \int_0^{\tau}  \!\!\!\int_{\Om}\vr_{\ep}  \nabla \Phi_{\ed}  \cdot\nabla \Pi  \dx\ds \\
		&=\int_0^{\tau}  \!\!\!\int_{\Om} (\vr_{\ep} \vU-\vr_{\ep} \vu_{\ep} )\cdot  \nabla \Pi  \dx\ds + \beta \int_0^{\tau}  \!\!\!\int_{\Om} \Big( (\vr_{\ep}-b)\vW+(b\vW-\vr_{\ep}\vw_{\ep}) \Big) \cdot \nabla \Pi  \dx\ds \\
		&\qquad + \int_0^{\tau}  \!\!\!\int_{\Om}\vr_{\ep}  \nabla \Phi_{\ed}  \cdot\nabla \Pi  \dx\ds \\
        & 
        =:\sum_{i=1}^3 \mathcal{I}_{3,i} .
	\end{align*}
	Because the terms $\mathcal{I}_{3,1}$ and $\mathcal{I}_{3,2}$ do not depend on the acoustic part at all, they can be estimated exactly as the term $\mathcal{A}_1$ in the case for well-prepared data, see Section \ref{Sec:5.2}. In view of the  uniform bounds \eqref{bound:vw}, \eqref{bound:rhou} and the weak convergences \eqref{eq:conv_prop:ill-2}, the adaptation of this argument is straightforward. Therefore,
    \begin{align*}
	\left|\mathcal{I}_{3,1}\right| + \left|\mathcal{I}_{3,2}\right|
    \leq\sup_{t\in[0,\tau]} R_\ep(t) . 
	\end{align*}
    
    The term $\mathcal{I}_{3,3} $ is the most delicate one, so we need to proceed carefully.
    To begin with, we make use of \eqref{vrep1} to write
	\begin{align*}
	\mc I_{3,3}
&= \int_0^{\tau}  \!\!\!\int_{\Om}b  \nabla \Phi_{\ed}  \cdot\nabla \Pi  \dx\ds +
\veps \int_0^{\tau}  \!\!\!\int_{\Om} \vr_{\ep}^{(1)}  \nabla \Phi_{\ed}  \cdot\nabla \Pi  \dx\ds .
	\end{align*}
	By cutting into essential and residual set, and using estimates \eqref{bound:rhou}, \eqref{acc-wave-reg} and \eqref{acc-wave-dis}, we easily see that
	\begin{align*}
\veps   \left|\int_0^{\tau}  \!\!\!\int_{\Om} \vr_{\ep}^{(1)}  \nabla \Phi_{\ed}  \cdot\nabla \Pi  \dx\ds\right| &\leq 
\ep \left(\| [\vr_{\ep}^{(1)}]_{\ess}\|_{L^\infty_t L^2_x} \|\nabla \Pi \|_{L^\infty_t L^2_x}  
        \int_0^{\tau} \| \nabla \Phi_{\ed}  \|_{L^\infty_x} \ds \right. \\
  &\qquad\qquad  \left.   + 
    \| [\vr_{\ep}^{(1)}]_{\res}\|_{L^\infty_t L^\gamma_x} \|\nabla \Pi \|_{L^\infty_t L^{\gamma^\prime}_x}  \int_0^{\tau} \| \nabla \Phi_{\ed}  \|_{L^\infty_x} \ds\right) \\
    &\leq \int^\tau_0\chi_\veps(\delta,s) \ds.
	\end{align*}

	For the first term appearing in the expression of $\mc I_{3,3}$, instead, we can integrate by parts and take advantage of the target equation \eqref{ane-app2}.
In this way, we obtain
\begin{align*}
 \int_0^{\tau}  \!\!\!\int_{\Om}b  \nabla \Phi_{\ed}  \cdot\nabla \Pi  \dx\ds &= 
 -\int_0^{\tau}  \!\!\!\int_{\Om} \Phi_{\ed}  \div\Big(b\ \nabla \Pi\Big)  \dx\ds \\
 &=\int_0^{\tau}  \!\!\!\int_{\Om} \Phi_{\ed}  \div\Big(b \ \vU\cdot\nabla\vU\Big)  \dx\ds -
 2\mu\int_0^{\tau}  \!\!\!\int_{\Om} \Phi_{\ed}  \div\div\Big(b\ \bd\vU\Big)  \dx\ds.
\end{align*}
Since the first term on the right is quadratic, we can simply bound it as follows:
\begin{align*}
 \left|\int_0^{\tau}  \!\!\!\int_{\Om} \Phi_{\ed}  \div\Big(b \ \vU\cdot\nabla\vU\Big)  \dx\ds\right| &\leq \left\|b\right\|_{W^{1,\infty}}
 \left\|\vU\right\|^2_{L^\infty_tH_x^2} \int_0^{\tau} \| \nabla \Phi_{\ed}  \|_{L^\infty_x} \ds \\
 &\leq \int^\tau_0\chi_\veps(\delta,s) \ds,
\end{align*}
where we used again the dispersive estimates from \eqref{acc-wave-dis} in the last step.

Next, we remark that we can compute explicitly
\[
\div\div\Big(b\ \bd\vU\Big) = 2 b \ \Delta\div \vU + \mc B(\Grad b, \Grad \vU)\,,
\]
where $\mc B(\Grad b, \Grad \vU)$ is a sum of bilinear terms depending on (derivatives of) $\Grad b$ and $\Grad \vU$, whose precise expression is not really important 
in our computations. The important point is that each term in this expression contains at least one derivative of the function $b$. Indeed, it follows
from our assumptions \eqref{eq:prop-b} that $\Grad b$ has compact support (recall that $\supp G$ is supposed to be compact in the case $\Omega=\R^3$).
Thus, we can bound
\begin{align*}
\left|\int_0^{\tau}  \!\!\!\int_{\Om} \Phi_{\ed}  \mc B(\Grad b, \Grad\vU)  \dx\ds\right| &\leq 
\left\|b\right\|_{W^{1,\infty}}\int_0^{\tau}   \left\|\Phi_\ed\right\|_{L^\infty_x} \left\|\Grad\vU\right\|_{L^1_x(\supp G)} \ds \\
&\leq \big|\supp G\big|^{1/2} \left\|b\right\|_{W^{1,\infty}} \left\|\Grad\vU\right\|_{L^\infty_tL^2_x} \int_0^{\tau} \| \nabla \Phi_{\ed}  \|_{L^\infty_x} \ds \\
&\leq \int^\tau_0\chi_\veps(\delta,s) \ds.
\end{align*}
Finally, for the remaining term, we split the space integral in the following way:
\begin{align*}
\int_0^{\tau}  \!\!\!\int_{\Om} \Phi_{\ed}  b \Delta\div\vU  \dx\ds &= \int_0^{\tau}  \!\!\!\int_{\supp G} \Phi_{\ed}  b \Delta\div\vU  \dx\ds + 
\int_0^{\tau}  \!\!\!\int_{\mathbb{R}^3\setminus\supp G} \Phi_{\ed}  b \Delta\div\vU  \dx\ds \\
&= \int_0^{\tau}  \!\!\!\int_{\supp G} \Phi_{\ed}  b \Delta\div\vU  \dx\ds ,
\end{align*}
where we have used the fact that $b\equiv \oline \varrho$ on $\mathbb{R}^3\setminus\supp G$, which implies that $\div \vU=0$ on that set, owing to \eqref{ane-app1}.
Now, precisely as done above, we can estimate
\begin{align*}
 \left|\int_0^{\tau}  \!\!\!\int_{\supp G} \Phi_{\ed}  b \Delta\div\vU  \dx\ds\right| &\leq
\left\|b\right\|_{L^{\infty}}\int_0^{\tau} \left\|\Phi_\ed\right\|_{L^\infty_x} \left\|\Delta\div\vU\right\|_{L^1_x(\supp G)} \ds \leq \int^\tau_0\chi_\veps(\delta,s) \ds,
\end{align*}
where once again we relied on the dispersive estimates \eqref{acc-wave-dis}.

All in all, we have proved that
    \begin{align*}
        | \mathcal{I}_{3,3}| \leq \int_0^\tau \chi_{\ep}  (\delta,s) \ds ;
    \end{align*}
whence we finally obtain
	\begin{align}\label{I3}
		 \left|\mathcal{I}_3\right|\leq \left|\mathcal{I}_{3,1}\right|+ \left|\mathcal{I}_{3,2}\right| + |\mathcal{I}_{3,3}|
		\leq 
		\sup_{t \in [0, \t]} R_\ep(t)+ \int_0^\tau \chi_{\ep}  (\delta,s) \ds . 
	\end{align}

	\paragraph{Estimate of terms $\mathcal{I}_4$ and $\mathcal{I}_5$:} 
We notice that all the parts of terms $\mathcal{I}_4$ and $\mathcal{I}_5$ can be handled in the same way, and so we estimate only one of them. 
\begin{align*}
		&\int_0^{\tau}  \!\!\!\int_{\Om} \vr_{\ep} (\vv_{\ep}-\vV_{\ed}) \cdot \vU \cdot \nabla  \nabla \Phi_{\ed}    \dx\ds  \\
		&
		\quad 
		= \int_0^{\tau}  \!\!\!\int_{\Om} \sqrt{\vr_{\ep}} (\vv_{\ep}-\vV_{\ed}) \vU \cdot \lr{\sqrt{\vr_{\ep}} } \nabla^2 \Phi_{\ed}    \dx\ds \\
		&
		\quad
		\leq 
			\int_0^{\tau} 
        \|  \nabla^2 \Phi_{\ed}  (s) \|_{L^\infty_x } \lr{\int_{\Om}  \vr_{\ep} | \vU|^2 | \nabla^2 \Phi_{\ed}  | \dx}\ds + 	\int_0^{\tau}  \!\!\!\int_{\Om} \vr_{\ep} |\vv_{\ep}-\vV_{\ed}|^2   \dx\ds \\
           &
		\quad \leq \int_0^{\tau} \|  \nabla^2 \Phi_{\ed}  (s) \|_{L^\infty_x } \lr{\int_{\Om} \left(b+\ep \vr_{\ep}^{(1)}\right) | \vU|^2 | \nabla^2 \Phi_{\ed}  | \dx}\ds + 	\int_0^{\tau}  \!\!\!\int_{\Om} \vr_{\ep} |\vv_{\ep}-\vV_{\ed}|^2   \dx\ds \\ 
        &\quad\leq \int_0^{\tau} \|  \nabla^2 \Phi_{\ed}  (s) \|_{L^\infty_x } \lr{\int_{\Om} b | \vU|^2 | \nabla^2 \Phi_{\ed}  | \dx}\ds +\ep \int_0^{\tau} 
        \|  \nabla^2 \Phi_{\ed}  (s) \|_{L^\infty_x } \lr{\int_{\Om} |[\vr_{\ep}^{(1)}]_{\text{ess}}| | \vU|^2 | \nabla^2 \Phi_{\ed}  | \dx}\ds\\
        &\quad\quad +\ep \int_0^{\tau}
        \|  \nabla^2 \Phi_{\ed}  (s) \|_{L^\infty_x } \lr{\int_{\Om} | [\vr_{\ep}^{(1)}]_{\text{res}}| | \vU|^2 | \nabla^2 \Phi_{\ed}  | \dx}\ds + 	\int_0^{\tau}  \!\!\!\int_{\Om} \vr_{\ep} |\vv_{\ep}-\vV_{\ed}|^2   \dx\ds .
	\end{align*} 
    Using \eqref{bound:rhou}, \eqref{acc-wave-reg}, Sobolev embeddings, and the dispersive estimate of $\Phi_{\ed}$ from \eqref{acc-wave-dis}, we obtain
    \begin{align*}
        &\left|\int_0^{\tau}  \!\!\!\int_{\Om} \vr_{\ep} (\vv_{\ep}-\vV_{\ed}) \cdot \vU \cdot \nabla  \nabla \Phi_{\ed}    \dx\ds \right| \\
        & \quad 
        \leq C(b) \| \vU \|^2_{L^\infty_t L^4_x}
        \|\nabla^2 \Phi_{\ed} \|_{L^\infty_t L^2_x}  
        \int_0^{\tau} \| \nabla \Phi_{\ed}  \|_{L^\infty_x} \ds \\
        &\qquad
        +\ep \| [\vr_{\ep}^{(1)}]_{\text{ess}}\|_{L^\infty_t L^2_x} \|\vU \|_{L^\infty_t L^\infty_x}^2 \|\nabla^2 \Phi_{\ed} \|_{L^\infty_t L^2_x} \int_0^{\tau} \| \nabla^2 \Phi_{\ed}  \|_{L^\infty_x} \ds \\
        &\qquad
        +\ep \| [\vr_{\ep}^{(1)}]_{\text{res}}\|_{L^\infty_t L^\gamma_x}\|\vU \|_{L^\infty_t L^\infty_x}^2 \|\nabla^2 \Phi_{\ed} \|_{L^\infty_t L^{\gamma'}_x}  \int_0^{\tau} \| \nabla^2 \Phi_{\ed}  \|_{L^\infty_x} \ds \\
        &\qquad+ 	\int_0^{\tau}  \!\!\!\int_{\Om} \vr_{\ep} |\vv_{\ep}-\vV_{\ed}|^2   \dx\ds \\
        & \quad 
        \leq \int_0^\tau \chi_{\ep}  (\delta,s) \ds
        + \sup_{t \in [0, \t]} R_\ep(t)+  \int_0^\tau  \chi(s) \mathscr{E}( \vr_{\ep},\vv_{\ep},\vw_{\ep}| r_{\ed},\vV_{\ed},\vW)(s) \ds ,
    \end{align*}
which gives rise to 
	\begin{align}\label{I4-I5}
		|\mathcal{I}_4|+|\mathcal{I}_5| \leq \int_0^\tau \chi_{\ep}  (\delta,s) \ds
        + \sup_{t \in [0, \t]} R_\ep(t)+  \int_0^\tau  \chi(s) \mathscr{E}( \vr_{\ep},\vv_{\ep},\vw_{\ep}| r_{\ed},\vV_{\ed},\vW)(s) \ds .
	\end{align}

	\paragraph{Estimate of  term $\mathcal{I}_6$:} From the fact that $p(\vr)=(\gamma-1) P(\vr)$, we have 
	\begin{align*}
		\Big(
        p(\vr_{\ep})-p( r_{\ed}) -  p'( r_{\ed})(\vr_{\ep}- r_{\ed}) 
        \Big)
        = (\gamma-1)
        \Big(
        P(\vr_{\ep})-P( r_{\ed}) -  P'( r_{\ed})(\vr_{\ep}- r_{\ed}) 
         \Big).
	\end{align*}
	Hence, it holds 
	\begin{align*}
		\mathcal{I}_6&=	-
        \f{1}{\ep^2} \int_0^{\tau}  \!\!\!\int_{\Om}
        \Big( 
        p(\vr_{\ep})-p( r_{\ed}) -  p'( r_{\ed})(\vr_{\ep}- r_{\ed})
        \Big)
        \Div \,\vU_{\ed}\dx\ds \\ 
		& 
		=
		-
        \f{\gamma-1}{\ep^2} \int_0^{\tau}  \!\!\!\int_{\Om}
        \Big(
        P(\vr_{\ep})-P( r_{\ed}) -  P'( r_{\ed})(\vr_{\ep}- r_{\ed}) 
        \Big) \Div \, \vU\dx\ds \\
		&
		\quad 
		-
		\f{\gamma-1}{\ep^2} \int_0^{\tau}  \!\!\!\int_{\Om}
        \Big( 
        P(\vr_{\ep})-P( r_{\ed}) -  P'( r_{\ed})(\vr_{\ep}- r_{\ed})
        \Big)
        \Delta \Phi_{\ed}\dx\ds \\
		&
		=: 
		\mathcal{I}_{6,1} + \mathcal{I}_{6,2}.
	\end{align*} 
	It is easily seen that  
	\begin{align*}
		|\mathcal{I}_{6,1} | 
        \leq  
        \int_0^\tau  \chi(s) \, 
        \mathscr{E}( \vr_{\ep},\vv_{\ep},\vw_{\ep}| r_{\ed},\vV_{\ed},\vW)(s) \ds .
	\end{align*}
	Here, we notice that the $L^1$-norm of $\chi$ depends on $ \|  \Div \,\vU  \|_{L^1_t L^\infty_x} $. 
    By applying again the dispersive estimate \eqref{acc-wave-dis} and invoking the notation \eqref{gen-const-1}, it follows that
	\begin{align*}
		|\mathcal{I}_{6,2}| &\leq 
        \int_0^\tau 
        \|  \Delta \Phi_{\ed} \|_{L^\infty_x} \, \mathscr{E}( \vr_{\ep},\vv_{\ep},\vw_{\ep}| r_{\ed},\vV_{\ed},\vW)(s) \ds \nonumber \\
		&\leq  
        \int_0^\tau \chi_{\ep}(\delta,s) \, \mathscr{E}( \vr_{\ep},\vv_{\ep},\vw_{\ep}| r_{\ed},\vV_{\ed},\vW)(s) \ds .
	\end{align*}
Combining the two estimates above, we obtain
	\begin{align}\label{I6}
		|\mathcal{I}_6| \leq  \int_0^\tau 
       \Big( \chi(s) +\chi_{\ep}(\delta,s) \Big) \,
        \mathscr{E}( \vr_{\ep},\vv_{\ep},\vw_{\ep}| r_{\ed},\vV_{\ed},\vW)(s) \ds .
	\end{align}

	\paragraph{Estimate of term $\mathcal{I}_7$:} 

    For fixed $\delta $ and sufficiently small $\ep$, we infer from  \eqref{r-ep-del} that 
	\begin{align*}
		\|  P^{\prime \prime}( r_{\ed})  \|_{ L^\infty_{t,x} }  
        \leq
        C(\delta, \Ov{b}, \underline{b} ) .
	\end{align*}
	This, together with definitions \eqref{eq:test}, gives rise to  
	\begin{align*}
		\mathcal{I}_7&= \f{1}{\ep} \int_0^{\tau}  \!\!\!\int_{\Om}\lr{ P^{\prime \prime}( r_{\ed} )(\vr_{\ep}- r_{\ed}) \Div\lr{ s_{\ed} \nabla \Phi_{\ed}} }\dx\ds
		\\
		& 
		\leq C(\delta, \Ov{b}, \underline{b} )
		\int_0^{\tau}  \!\!\!\int_{\Om} \left( \left| \f{\vr_{\ep}-b}{\ep}   \right| + |s_{\ed}|  \right) \left(	|s_{\ed}|+ |\Grad s_{\ed}|	\right)  \left(  |\Grad \Phi_{\ed}| +|\Delta \Phi_{\ed}|   \right) \dx\ds \\
		&\leq C(\delta, \Ov{b}, \underline{b} ) 
		\int_0^{\tau}  \!\!\!\int_{\Om}	\left| \f{\vr_{\ep}-b}{\ep}   \right| 	\left(	|s_{\ed}|+ |\Grad s_{\ed}|	\right)  \left( |\Grad \Phi_{\ed}| +|\Delta \Phi_{\ed}|   \right)	\dx\ds \\
		&\quad +  C(\delta, \Ov{b}, \underline{b} )
		\int_0^{\tau}  \!\!\!\int_{\Om} 	 |s_{\ed}|  	\left(	|s_{\ed}|+ |\Grad s_{\ed}| \right)  \left(  |\Grad \Phi_{\ed}| +|\Delta \Phi_{\ed}|   \right)
		\dx\ds \\
		&\leq C(\delta, \Ov{b}, \underline{b} )  
		\int_0^{\tau} 
        \left\| 
        \left[   \f{\vr_{\ep}-b}{\ep}  \right]_{\text{ess}} \right\|_{L^2_x}  
        \| \left(	|s_{\ed}|+ |\Grad s_{\ed}|	\right)  \|_{L^2_x } 
        \| \left( |\Grad \Phi_{\ed}| +|\Delta \Phi_{\ed}|   \right) \|_{L^\infty_x } 
        \ds \\
		&\quad +C(\delta, \Ov{b}, \underline{b} )  
		\int_0^{\tau} \ep 
        \left\| 
        \left[ \f{1+\vr_{\ep}^\gamma}{\ep^2} \right]_{\text{res}} \right\|_{L^1_x}  \| \left(	|s_{\ed}|+ |\Grad s_{\ed}|	\right)  \|_{L^\infty_x } 
        \| 
        \left( |\Grad \Phi_{\ed}| +|\Delta \Phi_{\ed}|   \right)
        \|_{L^\infty_x }  \ds \\
		&\quad + C(\delta, \Ov{b}, \underline{b} ) \int_0^{\tau} \|  s_{\ed} \|_{L^2_x}  \| \left(	|s_{\ed}|+ |\Grad s_{\ed}|	\right)  \|_{L^2_x }
        \| \left( |\Grad \Phi_{\ed}| +|\Delta \Phi_{\ed}|   \right) \|_{L^\infty_x }  \ds. 
\end{align*}
Using the uniform bounds of $\displaystyle \left[\frac{\vr_{\ep} - b}{\ep} \right]_{\text{ess}} $ and $ \displaystyle \left[ \f{1+\vr_{\ep}^\gamma}{\ep^2} \right]_{\text{res}} $ in \eqref{bound:rhou}, together with the energy and dispersive estimates \eqref{acc-wave-reg} and \eqref{acc-wave-dis}, we obtain
	\begin{align}\label{I7}
		|\mathcal{I}_7| 
        \leq 
		\int_0^\tau \chi_{\ep } (\delta,s)  \ds  .
	\end{align}

	\paragraph{Estimate of term $\mathcal{I}_8$:} 
	A direct computation shows that
	\begin{align}\label{ill8}
		\begin{split}
		&\Grad 
        \Big(
        P^\prime( r_{\ed})-P^\prime(b)-( r_{\ed}-b)P^{\prime \prime}(b)
        \Big)  \\
		&
		\quad 
		= \ep 
        \Big(
        P^{\prime\prime}( r_{\ed})-P^{\prime\prime}(b) 
        \Big) \nabla  s_{\ed} + 
        \Big(
        P^{\prime\prime}( r_{\ed})-P^{\prime\prime}(b)-P^{\prime\prime\prime}(b)( r_{\ed}-b)
        \Big)
        \nabla b.
	\end{split}
	\end{align}
	From the choice of $\delta$ and $\ep$ leading to \eqref{r-ep-del}, we see 
	\begin{align*}
		\|  P^{(3)}( \xi )  \|_{L^\infty_{t,x}} + 	\|  P^{(4)}( \xi )  \|_{ L^\infty_{t,x}}  \leq C(\delta, \Ov{b}, \underline{b} ) ,
	\end{align*}
for any $\xi \in \big(\min\{r_{\ed} (t,x), b(x)\},\max\{r_{\ed} (t,x), b(x)\}\big)$.

	Based on the observation above and Taylor's formula, from \eqref{ill8} we deduce that 
	\begin{align*}
	|	\mathcal{I}_8| 
    &\leq 
    \frac{1}{\ep^2} \int_0^{\tau}  \!\!\!\int_{\Om} \vr_{\ep}  \left\vert \Grad \lr{ P^\prime( r_{\ed})-P^\prime(b)-( r_{\ed}-b)P^{\prime \prime}(b)} \right\vert |(\vv_{\ep}-\vV_{\ed}) |\dx \ds \\
	&\leq 
    \frac{1}{\ep^2}
    \|\Grad b\|_{L^\infty_x}
    \int_0^{\tau} \int_{\Om}\vr_{\ep}  \lr{\ep^2 C(\delta, \Ov{b}, \underline{b} )  }  \lr{ s_{\ed}|\nabla  s_{\ed}|+  s_{\ed}^2} |\vv_{\ep}-\vV_{\ed}| \dx\ds \\
	& \leq C(\delta, \Ov{b}, \underline{b} , \|\Grad b\|_{L^\infty_x}) \int_0^{\tau} \int_{\Om}\vr_{\ep}  \lr{ s_{\ed}|\nabla  s_{\ed}|+  s_{\ed}^2}  |\vv_{\ep}-\vV_{\ed}| \dx\ds. 
    \end{align*}
    Seeing further that 
    \begin{align*}
	|	\mathcal{I}_8| 
	& \leq   C(\delta, \Ov{b}, \underline{b},\|\Grad b\|_{L^\infty_x}) \int_0^{\tau} \int_{\Om}\vr_{\ep}    |\vv_{\ep}-\vV_{\ed}| ^2  \dx\dt  +  C(\delta, \Ov{b}, \underline{b},\|\Grad b\|_{L^\infty_x}) \int_0^{\tau} \int_{\Om}  \lr{|s_\ed |^4+|s_\ed |^2|\nabla s_{\ed}|^2} \mathbf{1}_{\text{ess}}\dx \dt  \\
    &\quad  +  C(\delta, \Ov{b}, \underline{b},\|\Grad b\|_{L^\infty_x}) \int_0^{\tau} \int_{\Om} \vr_\ep \lr{|s_\ed |^4+|s_\ed |^2|\nabla s_{\ed}|^2} \mathbf{1}_{\text{res}}\dx \dt\\
	&\leq   C(\delta, \Ov{b}, \underline{b},\|\Grad b\|_{L^\infty_x}) \int_0^{\tau}  \int_{\Om} \vr_{\ep}    |\vv_{\ep}-\vV_{\ed}| ^2 \dx\dt  +    C(\delta, \Ov{b}, \underline{b}) \int_0^{\tau} \int_{\Om}  \lr{|s_\ed |^4+|s_\ed |^2|\nabla s_{\ed}|^2} \mathbf{1}_{\text{ess}}\dx \dt \\
    &\quad + C(\delta, \Ov{b}, \underline{b},\|\Grad b\|_{L^\infty_x}) \int_0^{\tau} \int_{\Om} [1+\vr^\gamma]_{\text{res}}\lr{|s_\ed |^4+|s_\ed |^2|\nabla s_{\ed}|^2} \dx \dt .
	\end{align*}
	Recalling again that $\displaystyle \int_{\Om}  \left[ \frac{1+\vr_\ep^\gamma}{\ep^2}  \right]_{\text{res}} \dx $ is uniformly bounded for a.e $t\in (0,\tau)$ and using similar argument of $\mathcal{I}_7$, we arrive at 
	\begin{align}\label{I8}
		|\mathcal{I}_8| \leq   \int_0^\tau
        \Big( 
        \chi (s) + \chi_{\ep} 
		(\delta,s)   \Big)  \, \mathscr{E}( \vr_{\ep},\vv_{\ep},\vw_{\ep}| r_{\ed},\vV_{\ed},\vW)(s) \ds +  \int_0^\tau 
		\chi_{\ep} 
		(\delta,s)  \ds .
	\end{align}

	\paragraph{Estimate of terms $\mathcal{I}_{9}$, $\mathcal{I}_{10}$ and $\mathcal{I}_{11}$:} 
	All these terms can be bounded by the left-hand side of \eqref{rel_entr13}. Indeed, recalling from Subsection \ref{Sec:5.2} that
	\begin{align*}
	\sqrt{\vr_\ep} \Big(\nabla\log\vr_\ep - \nabla\log b\Big) = \frac{1}{2\mu\sqrt{\k (1-\k)}} \sqrt{\vr_\ep}\Big(\vw_\ep- \vW\Big),
	\end{align*}
	 we may estimate
	\begin{align}\label{I9-11}
    \begin{split}
		&\left|\mathcal{I}_{9} \right| + \left|\mathcal{I}_{10} \right| + \left|\mathcal{I}_{11}\right|\\ 
		& \quad 
        \leq  \left|\frac{1}{\sqrt{\kappa(1-\kappa)}}\int_0^{\tau}\!\!\!\int_{\Om}  \sqrt{\vr_\ep}\Big(\vw_\ep- \vW\Big) \otimes \sqrt{\vr_{\ep}} (\vu_{\ep}-\vU_{\ed}): \Big[ 
		(1-\kappa)\bd \vV  	     -    \sqrt{\kappa (1-\kappa)}  \Grad \vW         \Big]    \dx\ds  \right|\\
		& \qquad   +\left|\frac{\sqrt{\kappa}}{\sqrt{1-\kappa}}\int_0^{\tau} \!\!\!\int_{\Om} \sqrt{\vr_\ep}\Big(\vw_\ep- \vW\Big)\otimes \sqrt{\vr_{\ep}} (\vv_{\ep}-\vV_{\ed}) :     \ba \vV \dx\dt \right|  \\
		& \qquad  + \left|  \int_0^{\tau}  \!\!\!\int_{\Om}\sqrt{\vr_\ep}\Big(\vw_\ep- \vW\Big)\otimes \sqrt{\vr_{\ep}}  (\vw_{\ep}-\vW) :   \ba \vV   \dx \ds \right|\\
		& \quad 
        \leq	   \int_0^{\tau}  \chi(s)   \, \mathscr{E}( \vr_{\ep},\vv_{\ep},\vw_{\ep}| r_{\ed},\vV_{\ed},\vW)(s) \ds,
        \end{split}
	\end{align}
	where  $\chi>0$ and $\displaystyle \| \chi \|_{L^1(0,\tau)}$ depends only on  $\kappa$  and on the $L^1(0,T;L^\infty(\Omega))$-norms of $\Grad \vV$ and $\Grad\vW$.

\paragraph{Estimate of term $\mathcal{I}_{12}$:} 
	To control $ \mathcal{I}_{12} $, we first invoke the identity 
	\begin{align*}
		&\vr_{\ep}  \Big( p'(\vr_{\ep})-p'( r_{\ed})\Big) \Grad \log r_{\ed}  \cdot (\Grad \log \vr_{\ep}-\Grad \log r_{\ed}    )      \\
		& \quad 
		= \nabla\lr{ p(\vr_{\ep})-p( r_{\ed}) -  p'( r_{\ed})(\vr_{\ep}- r_{\ed}) } \cdot \nabla \log r_{\ed} \\
		& 
		\qquad 
		- \Big(
        \vr_{\ep} (p^\prime(\vr_{\ep}) - p^\prime( r_{\ed}))- p^{\prime \prime}( r_{\ed}) (\vr_{\ep}- r_{\ed}) r_{\ed} 
        \Big)  \vert \nabla \log r_{\ed} \vert^2.
	\end{align*}
	From the following identities,
	\begin{align*}
		\nabla \log  r_{\ed}= \frac{\nabla  r_{\ed}}{ r_{\ed}}
        \quad \text{ and }	
        \quad 
        \Delta \log  r_{\ed} = \frac{\Delta  r_{\ed}}{ r_{\ed}}- \frac{|\nabla  r_{\ed}|^2}{ r_{\ed}^2},
	\end{align*}
	and
	\begin{align*}
		&\vr_{\ep} (p^\prime(\vr_{\ep}) - p^\prime( r_{\ed}))- p^{\prime \prime}( r_{\ed}) (\vr_{\ep}- r_{\ed}) r_{\ed} \\
		& \quad 
		=\vr_\ep p^\prime(\vr_\ep)-  r_{\ed} p^\prime ( r_{\ed}) -(\vr_\ep- r_{\ed})\lr{p^{\prime}( r_{\ed} )+p^{\prime\prime}( r_{\ed})  r_{\ed} } \\
		& 
		\quad 
		=\gamma(\gamma-1)\lr{  P(\vr_{\ep})-P( r_{\ed}) -  P'( r_{\ed})(\vr_{\ep}- r_{\ed}) }, 
	\end{align*}
	we deduce
	\begin{align*}
		|\mathcal{I}_{12}| &
        \leq 
        \f{\gamma-1}{\ep^2} \int_0^{\tau}  \!\!\!\int_{\Om}\lr{ P(\vr_{\ep})-P( r_{\ed}) -  P'( r_{\ed})(\vr_{\ep}- r_{\ed}) } \lr{\frac{|\Delta  r_{\ed}|}{ r_{\ed}}+ \frac{|\nabla  r_{\ed}|^2}{ r_{\ed}^2}}\dx\ds \\
		& \quad 
        + \f{\gamma(\gamma-1)}{\ep^2} \int_0^{\tau}  \!\!\!\int_{\Om}\lr{ P(\vr_{\ep})-P( r_{\ed}) -  P'( r_{\ed})(\vr_{\ep}- r_{\ed}) } \f{|\nabla  r_{\ed}|^2}{ r_{\ed}^2}\dx\ds \\
		& \leq 
        \f{C}{\ep^2} \int_0^{\tau}  \!\!\!\int_{\Om}\lr{ P(\vr_{\ep})-P( r_{\ed}) -  P'( r_{\ed})(\vr_{\ep}- r_{\ed}) } \frac{|\Delta b|+ \ep |\Delta   s_{\ed}|}{ r_{\ed}}\dx\ds \\
		& \quad 
        +\f{C}{\ep^2} \int_0^{\tau}  \!\!\!\int_{\Om}\lr{ P(\vr_{\ep})-P( r_{\ed}) -  P'( r_{\ed})(\vr_{\ep}- r_{\ed}) } \f{|\nabla b|^2+\ep^2  |\nabla  s_{\ed}|^2}{ r_{\ed}^2}\dx\ds  \\
		& =:
		 \,
		\mathcal{I}_{12,1}+ \mathcal{I}_{12,2}.
	\end{align*}
	From \eqref{r-ep-del} and the dispersive estimate \eqref{acc-wave-dis}, we obtain that
	\begin{align*}
		|\mathcal{I}_{12,1}| &\leq 
        \f{C}{\ep^2} \left|\int_0^{\tau}  \!\!\!\int_{\Om}  \lr{ P(\vr_{\ep})-P( r_{\ed}) -  P'( r_{\ed})(\vr- r_{\ed}) } \frac{|\Delta b| }{ r_{\ed}}   \dx\ds \right|\\
		& \quad 
        + \ep \f{C}{\ep^2} \left|\int_0^{\tau}  \!\!\!\int_{\Om}  \lr{ P(\vr_{\ep})-P( r_{\ed}) -  P'( r_{\ed})(\vr- r_{\ed}) } \frac{  |\Delta   s_{\ed}|}{ r_{\ed} } \dx\ds
         \right| \\ 
		& \leq 
        \f{C}{\ep^2} \int_0^{\tau}  \!\!\!\int_{\Om}\left|  \lr{ P(\vr_{\ep})-P( r_{\ed}) -  P'( r_{\ed})(\vr- r_{\ed}) } \right|  \frac{|\Delta b| }{\underline{b}} \dx\ds \\
		& \quad 
        + \ep \f{C}{\ep^2} \int_0^{\tau}  \!\!\!\int_{\Om} \left|  \lr{ P(\vr_{\ep})-P( r_{\ed}) -  P'( r_{\ed})(\vr- r_{\ed}) } \right|  \frac{  |\Delta   s_{\ed}|}{ \underline{b}}  \dx\ds \\
		& \leq 
        {C(\underline{b})} \int_0^{\tau} 
        \| \Delta b \|_{L^\infty_x } \mathscr{E}( \vr_{\ep},\vv_{\ep},\vw_{\ep}| r_{\ed},\vV_{\ed},\vW)(s) \ds  \\
		& \quad 
        +  C(\underline{b}) \int_0^{\tau}   \ep \|\Delta   s_{\ed} \|_{L^\infty_x} \mathscr{E}( \vr_{\ep},\vv_{\ep},\vw_{\ep}| r_{\ed},\vV_{\ed},\vW)(s) \ds \\
		& \leq 
        \int_0^{\tau} 
       \Big( \chi(s) + \chi_{\ep}(\delta, s)  \Big) \,
        \mathscr{E}( \vr_{\ep},\vv_{\ep},\vw_{\ep}| r_{\ed},\vV_{\ed},\vW)(s) \ds.
	\end{align*}
	The estimate of $\mathcal{I}_{12,2}$ is similar. In summary, 
	\begin{align}\label{I12}
			|\mathcal{I}_{12}| 
            \leq  
            \int_0^{\tau} 
       \Big( \chi(s) + \chi_{\ep}(\delta, s)  \Big) \,
        \mathscr{E}( \vr_{\ep},\vv_{\ep},\vw_{\ep}| r_{\ed},\vV_{\ed},\vW)(s) \ds.
	\end{align}

\paragraph{Estimate of term $\mathcal{I}_{13}$:} It remains to treat 
    \begin{align*}
        \mathcal{I}_{13}
        =
        \f{2\kappa \mu}{\ep^2} \int_0^{\tau}  \!\!\!\int_{\Om} \vr_{\ep} \lr{ p'( r_{\ed}) \Grad \log r_{\ed} - p'( \vr_{\ep}) \Grad \log \vr_{\ep}}\cdot \left( \nabla \log r_{\ed} -\nabla \log b \right)           \dx\ds  .
    \end{align*}
    At first, we notice that 
    \begin{align}\label{i13-1}
         \nabla \log r_{\ed} -\nabla \log b = \frac{b\nabla r_{\ed}- r_\ed \nabla b}{\; r_{\ed} \; b}= \ep \frac{b \nabla s_{\ed} - s_{\ed} \nabla b }{ \; r_{\ed}\; b}.
    \end{align}
    Then, we observe the following identity 
    \begin{align*}
         &\vr_{\ep} \lr{ p'( r_{\ed}) \Grad \log r_{\ed} - p'( \vr_{\ep}) \Grad \log \vr_{\ep}}\cdot \left( \nabla \log r_{\ed} -\nabla \log b \right)     \\
         & \quad =\vr_{\ep} p'( \vr_{\ep}) \lr{ \Grad \log r_{\ed}- \Grad \log \vr_{\ep}}  \cdot \left( \nabla \log r_{\ed} -\nabla \log b \right) \\
         & \quad \quad + \vr_{\ep} \lr{ p'( r_{\ed}) - p'( \vr_{\ep}) } \Grad \log r_{\ed} \cdot \left( \nabla \log r_{\ed} -\nabla \log b \right) .
    \end{align*}
    The second term can be estimated similarly to $\mathcal{I}_{12}$. More precisely,  
    \begin{align*}
       & \vr_{\ep} \lr{ p'( r_{\ed}) - p'( \vr_{\ep}) } \Grad \log r_{\ed} \cdot \left( \nabla \log r_{\ed} -\nabla \log b \right) \\
        & \quad = - \left[ \vr_\ep p'( \vr_{\ep})-  r_\ed p'( r_{\ed})-
        \p_{r_{\ed} }\lr{ r_\ed p'( r_{\ed})} (\vr_\ep-r_\ed )\right] \Grad \log r_{\ed} \cdot \left( \nabla \log r_{\ed} -\nabla \log b \right)\\
        &\quad \quad - \left[ (\vr_\ep-r_\ed) r_{\ed} p^{\prime\prime}(r_\ed)  \right ] \Grad \log r_{\ed} \cdot \left( \nabla \log r_{\ed} -\nabla \log b \right) \\
        & \quad =- \gamma (\gamma-1) \left[ P( \vr_{\ep})-  P( r_{\ed})-P^\prime (r_\ed) (\vr_\ep-r_\ed )\right] \Grad \log r_{\ed} \cdot \left( \nabla \log r_{\ed} -\nabla \log b \right)\\
        & \quad \quad- \left[ (\vr_\ep-r_\ed)  \right ] \Grad p^\prime( r_{\ed}) \cdot \left( \nabla \log r_{\ed}  -\nabla \log b \right) .
    \end{align*}
    Hence, we rewrite 
    \begin{align*}
        \mathcal{I}_{13} &=
        \f{2\kappa \mu}{\ep^2} \int_0^{\tau}  \!\!\!\int_{\Om} \vr_{\ep} p'( \vr_{\ep}) \lr{ \Grad \log r_{\ed}- \Grad \log \vr_{\ep}}  \cdot \left( \nabla \log r_{\ed} -\nabla \log b \right)         \dx\ds \\
        & \quad 
        - \gamma (\gamma-1) \f{2\kappa \mu}{\ep^2} \int_0^{\tau}  \!\!\!\int_{\Om} 
        \Big(
        P( \vr_{\ep})-  P( r_{\ed})-P^\prime (r_\ed) (\vr_\ep-r_\ed )
        \Big)  
        \Grad \log r_{\ed} \cdot \left( \nabla \log r_{\ed} -\nabla \log b \right) \dx\dt \\
        & \quad 
        - \f{2\kappa \mu}{\ep^2} \int_0^{\tau} 
        \!\!\!\int_{\Om} (\vr_\ep-r_\ed)   
        \Grad p^\prime( r_{\ed}) \cdot \left( \nabla \log r_{\ed}  -\nabla \log b \right) \dx \dt \\
       &  =: 
       \mathcal{I}_{13,1}+\mathcal{I}_{13,2}+\mathcal{I}_{13,3}.
    \end{align*}
    For the terms $\mathcal{I}_{13,2}$ and $\mathcal{I}_{13,3}$, we use the dispersive estimate similarly to what done for getting  \eqref{I12}, and obtain 
\begin{align}\label{I13-2,3}
			|\mathcal{I}_{13,2}|+ |\mathcal{I}_{13,3}|
            \leq   
            \int_0^{\tau} 
       \Big( \chi(s) + \chi_{\ep}(\delta, s)  \Big) \,
        \mathscr{E}( \vr_{\ep},\vv_{\ep},\vw_{\ep}| r_{\ed},\vV_{\ed},\vW)(s) \ds .
	\end{align}
Regarding $\mathcal{I}_{13,1}$, for any $\lambda>0$ we have  
\begin{align*}
        &\mathcal{I}_{13,1}\leq 
        \lambda
        \f{2\kappa \mu}{\ep^2} \int_0^{\tau}  \!\!\!\int_{\Om} \vr_{\ep} p'( \vr_{\ep}) |\lr{ \Grad \log r_{\ed}- \Grad \log \vr_{\ep}} |^2      \dx\ds
        + \f{1}{4\lambda}
        \f{2\kappa \mu}{\ep^2} \int_0^{\tau}  \!\!\!\int_{\Om} \vr_{\ep} p'( \vr_{\ep}) | \left( \nabla \log r_{\ed} -\nabla \log b \right)  |^2       \dx\ds.
    \end{align*}
    In particular, for $\lambda= \kappa<1$  the first term can be absorbed by the left-hand side of the relative entropy inequality \eqref{rel_entr13}. 
    We next see from \eqref{i13-1} that 
    \begin{align*}
        &\f{1}{4\lambda}
        \f{2\kappa \mu}{\ep^2} \int_0^{\tau}  \!\!\!\int_{\Om} \vr_{\ep} p'( \vr_{\ep}) | \left( \nabla \log r_{\ed} -\nabla \log b \right)  |^2       \dx\ds \\
       & \quad 
       \leq 
       \f{2}{4\lambda}
       2\kappa \mu
       \int_0^{\tau}  \!\!\!\int_{\Om} \vr_{\ep} p'( \vr_{\ep}) \lr{b^2 |\nabla s_{\ed}|^2+|s_\ed|^2|\nabla  b|^2} \frac{1}{r_{\ed}^2\; b^2}      \dx\ds.
    \end{align*}
    The difficulty is to use the dispersive estimate \eqref{acc-wave-dis} for the term 
    \begin{align*}
        \f{1}{4\lambda}
        2\kappa \mu
       \int_0^{\tau}  \!\!\!\int_{\Om} [\vr_{\ep} p'( \vr_{\ep})]_{\text{ess}} |\nabla s_{\ed}|^2        \dx\ds. 
    \end{align*}
   By Gagliardo-Nirenberg interpolation inequality, it holds  
   \begin{align*}
       \| \nabla s_{\ed} \|_{L^2_x}^2 
       \leq 
       C \| \nabla^3 s_{\ed} \|_{L^2_x}^{\frac{2}{5}}
       \|s_{\ed}  \|_{L^3_x}^{\frac{8}{5}} 
       \leq 
       C \| s_{\ed} \|_{H^3_x}^{\frac{2}{5}}
       \|s_{\ed}  \|_{L^3_x}^{\frac{8}{5}}
       \leq C 
       \| s_{\ed} \|_{H^3_x}^{\frac{2}{5}}
       \|s_{\ed}  \|_{L^2_x}^{\frac{16}{15}} 
       \|s_{\ed}  \|_{L^\infty_x}^{\frac{8}{15}}.
   \end{align*}
   Therefore, 
   \begin{align*}
       \int_0^{\tau}  \!\!\!\int_{\Om} [\vr_{\ep} p'( \vr_{\ep})]_{\text{ess}} |\nabla s_{\ed}|^2        \dx\ds 
       &\leq  C(\delta, \Ov{b}, \underline{b})  \int_0^{\tau}  \| s_{\ed} \|_{H^3_x}^{\frac{2}{5}} \|s_{\ed}  \|_{L^2_x}^{\frac{16}{15}} \|s_{\ed}  \|_{L^\infty_x}^{\frac{8}{15}} \dt \\ 
       &\leq C(\delta, \Ov{b}, \underline{b})  \| s_{\ed} \|_{L^\infty_t H^3_x}^{\frac{2}{5}} \|s_{\ed}  \|_{L^\infty_t L^2_x}^{\frac{16}{15}} \int_0^{\tau}  \|s_{\ed}  \|_{L^\infty_x}^{\frac{8}{15}} \dt \\
       &\leq C(\delta, \Ov{b}, \underline{b}) \lr{\| s_{\ed} \|_{L^\infty_t H^3_x}^{\frac{2}{5}} \|s_{\ed}  \|_{L^\infty_t L^2_x}^{\frac{16}{15}}} \tau^{\frac{7}{15}}  \lr{\int_0^{\tau} \|s_{\ed}  \|_{L^\infty_x} \dt}^{\frac{8}{15}}  \\
       &
       \leq \int_0^\tau 
		\chi_{\ep} 
		(\delta,s)  \ds.
   \end{align*}
   For the residual part, we have 
   \begin{align*}
    \int_0^{\tau}  \!\!\! \int_{\Om} [\vr_{\ep} p'( \vr_{\ep})]_{\text{res}} |\nabla s_{\ed}|^2        \dx\ds 
    \leq
       \int_0^{\tau}  \!\!\!\int_{\Om} (1+ \vr_{{\ep}}^\gamma)_{\text{res}} |\nabla s_{\ed}|^2        \dx\ds\leq \int_0^\tau 
		\chi_{\ep} 
		(\delta,s)  \ds, 
   \end{align*}
   where we used \eqref{bound:rhou} and \eqref{acc-wave-dis} to conclude.
   Recalling the fact that $\nabla b$ has compact support, we obtain 
   \begin{align*}
       &\int_0^{\tau}  \!\!\!\int_{\Om} \vr_{\ep} p'( \vr_{\ep})  \frac{|s_\ed|^2|\nabla  b|^2}{r_{\ed}^2\; b^2}      \dx\ds\\
       & \quad 
       \leq \int_0^{\tau}  \!\!\!\int_{\Om} [\vr_{\ep} p'( \vr_{\ep})]_{\text{ess}}  \frac{|s_\ed|^2|\nabla  b|^2}{r_{\ed}^2\; b^2}      \dx\ds+\int_0^{\tau}  \!\!\!\int_{\Om} [\vr_{\ep} p'( \vr_{\ep})]_{\text{res}}  \frac{|s_\ed|^2|\nabla  b|^2}{r_{\ed}^2\; b^2}      \dx\ds \\
       & \quad 
       \leq C(\delta, \Ov{b}, \underline{b},\|\Grad b\|_{L^\infty_x}) \int_0^\tau 
       \|s_{\ed} \|_{L^\infty_x}
        \|s_{\ed} \|_{L^2_x}  
        \| \nabla b \|_{L^2_x} \ds 
        +
        \ep^2
        C ( \Ov{b}, \underline{b},\|\Grad b\|_{L^\infty_x} ) 
        \int_0^{\tau}  \!\!\!\int_{\Om}
        \f{ (1+ \vr_{{\ep}}^\gamma)_{\text{res}} }
        {   \ep^2   }
        | s_{\ed}|^2        \dx\ds. 
   \end{align*}
   Hence, in view of $\lambda=\kappa$, we obtain 
   \begin{align*} 
			\mathcal{I}_{13,1}&
            \leq  \f{2\kappa^2 \mu}{\ep^2} \int_0^{\tau}  \!\!\!\int_{\Om} \vr_{\ep} p'( \vr_{\ep}) |\lr{ \Grad \log r_{\ed}- \Grad \log \vr_{\ep}} |^2      \dx\ds \\
            &\quad+\int_0^\tau 
		\chi_{\ep} 
		(\delta,s)  \ds 
        + 
        \int_0^{\tau}  
        \left(\chi(s) +\chi_{\ep}(\delta, s) \right) \,
        \mathscr{E}( \vr_{\ep},\vv_{\ep},\vw_{\ep}| r_{\ed},\vV_{\ed},\vW)(s) \ds
        .
	\end{align*}
   In summary,  
   \begin{align*} 
			|\mathcal{I}_{13}| &\leq  \f{2\kappa^2 \mu}{\ep^2} \int_0^{\tau}  \!\!\!\int_{\Om} \vr_{\ep} p'( \vr_{\ep}) |\lr{ \Grad \log r_{\ed}- \Grad \log \vr_{\ep}} |^2      \dx\ds \\
            &\quad+ \int_0^\tau 
		\chi_{\ep} 
		(\delta,s)  \ds 
        + 
         \int_0^{\tau}  
        \left(\chi(s) +\chi_{\ep}(\delta, s) \right) \,
        \mathscr{E}( \vr_{\ep},\vv_{\ep},\vw_{\ep}| r_{\ed},\vV_{\ed},\vW)(s) \ds .
	\end{align*}

	\paragraph{The Gr\"{o}nwall argument.}  
	Collecting the estimates \eqref{i1-i2}, \eqref{I3}, \eqref{I4-I5}, \eqref{I6}, \eqref{I7}, \eqref{I8}, \eqref{I9-11} and \eqref{I12}, we finally arrive at inequality \eqref{ill:out:gron}. From there, following the strategy described in Section \ref{subs:ill-p}, we can deduce
	\begin{align}\label{ill-rel-0}
		\lim\limits_{\delta \rightarrow 0 }
		\limsup\limits_{\ep \rightarrow 0 } 
		\mathscr{E}( \vr_\ep,\vv_\ep,\vw_\ep| r_{\ep,\delta},\vV_{\ep,\delta},\vW)(\tau)=0 ,
	\end{align}
for $0<\tau<T_{\ast}$, provided that $\mathscr{E}( \vr_{0,\ep},\vv_{0,\ep},\vw_{0,\ep}| r_{0,\ep,\delta},\vV_{0,\ep,\delta},\vW)$ is sufficiently small.

To this end, we observe that 
	\begin{align*}
		& \mathscr{E}( \vr_{0,\ep},\vv_{0,\ep},\vw_{0,\ep}| r_{0,\ep,\delta},\vV_{0,\ep,\delta},\vW)  \\
		& \quad 
		= 
		\f{1}{2} \int_{\Om}  \vr_{0,\ep} \Big(|\vv_{0,\ep}-\vV_{0,\ep,\delta}|^2+|\vw_{0,\ep}-\vW|^2  \Big)     \dx  \nonumber \\
		& \qquad +\f{1}{\ep^2} \int_{\Om}   \Big( P(\vr_{0,\ep})-P(r_{0,\ep,\delta})-P'(r_{0,\ep,\delta})(\vr_{0,\ep}-r_{0,\ep,\delta})  \Big)     \dx \\
		& \quad 
		=\f{1}{2} \int_{\Om}  \vr_{0,\ep}  
		\left| 
		\vu_{0,\ep}-\f{1}{b} \mathbb{P}_b[b \vu_0] -\Grad \Phi_{0,\ed}
		+ 2\kappa \mu \Grad \log \vr_{0,\ep}
		- 2\kappa \mu \Grad \log b
		\right|^2
		\dx \\
		& \qquad +
		\f{1}{2} \int_{\Om}  \vr_{0,\ep}  
		\left|
		2 \sqrt{ \kappa (1-\kappa)}
		\mu \Grad \log \vr_{0,\ep}
		- 
		2 \sqrt{ \kappa (1-\kappa)}
		\mu \Grad \log b
		\right|^2 \dx \\
		& \qquad +\f{1}{\ep^2} \int_{\Om}  
		\Big( 
		P(\vr_{0,\ep})-P(b+\ep s_{0,\ep,\delta})-P'(b+\ep s_{0,\ep,\delta})(\vr_{0,\ep}-b-\ep s_{0,\ep,\delta})   
		\Big)     \dx,
	\end{align*}
which implies
	\begin{align*}
		\lim\limits_{\delta \rightarrow 0 }
		\limsup\limits_{\ep \rightarrow 0 } \mathscr{E}( \vr_{0,\ep},\vv_{0,\ep},\vw_{0,\ep}| r_{0,\ep,\delta},\vV_{0,\ep,\delta},\vW) 
		=0.
	\end{align*}

\subsection{Conclusion of the proof of Theorem \ref{TH2}}
Our ultimate goal is to establish the following strong convergence as $\ep \rightarrow 0$: 
	\begin{align*}
		\sqrt{\vr_\ep} \vv_\ep \rightarrow \sqrt{b} \vV  
        \quad 
        \text{ strongly in } 
        \quad 
        L^2(0,T;L^2_{\text{loc}} (\Omega;\R^3)) .
	\end{align*}
	Let $K$ be a compact subset of $\R^3$. It holds  
	\begin{align*}
		&\int_0^T \int_K |\sqrt{\vr}_\ep \vv_\ep -\sqrt{b} \vV|^2 \dx \dt \\
		& \quad \leq 2 \int_0^T \int_K  \vr_\ep  |\vv_\ep - \vV|^2 \dx \dt + 2 \int_0^T \int_K |\sqrt{\vr}_\ep  -\sqrt{b}|^2 |\vV|^2 \dx \dt \\ 
		&\quad  \leq 4 \int_0^T \int_K  \vr_\ep  |\vv_\ep - \vV_{\ep,\delta}|^2 \dx \dt + 4 \int_0^T \int_K  \vr_\ep  | \nabla \Phi_{\ep,\delta}|^2 \dx \dt + 2\int_0^T \int_K |\sqrt{\vr}_\ep  -\sqrt{b}|^2 |\vV|^2 \dx \dt \\ 
		&\quad =:R_1+R_2+R_3 . 
	\end{align*}
    The right-hand side is estimated as follows. 
	\begin{itemize}
		\item Returning to \eqref{ill-rel-0}, we see that for any $\tau \in(0,T)$, we have 
		\begin{align*}
			\mathscr{E}( \vr_\ep,\vv_{\ep},\vw_{\ep}| r_{\ed},\vV_{\ed},\vW)(\tau) =R_\delta(\tau) \quad 
			\text{ as } \quad  \ep \rightarrow 0,
		\end{align*}
        where  $ R_\delta(\tau)$ satisfies
        \begin{align*}
           R_\delta(\tau) \rightarrow 0  \quad \text{ as }  \delta \rightarrow 0   \text{ for }  \tau \in (0,T). 
        \end{align*}
		which implies  
		\begin{align*}
			\lim\limits_{\ep \rightarrow 0 } R_1 \leq \int_0^T R_\delta(s) \ds.
		\end{align*}
		\item For $R_2$, we observe that $\displaystyle \vr_\ep = b+ \ep \vr^{(1)}_\ep$ with $b\in L^\infty(\R^3)$ (this eventually gives $b\in L^p_{\text{loc}} (\R^3)$ for all $p\geq 1$) and $\vr^{(1)}_\ep $ is uniformly bounded in $L^\infty(0,T; L^2 +L^\gamma (\R^3))$ (this yields $\vr^{(1)}_\ep $ is uniformly bounded in $L^\infty(0,T; L^{\tilde{\gamma}} (K))$ with $\tilde{\gamma}=\min\{2,\gamma\}$). Thus, we have 
		\begin{align*}
			|R_2|
			&
			\leq \int_0^T || b ||_{L^2(K)} ||\nabla \Phi_{\ep,\delta} ||_{L^4(K)}^2 \dt + \ep \int_0^T || \vr^{(1)}_\ep ||_{L^{\tilde\gamma}(K)} ||\nabla \Phi_{\ep,\delta} ||_{L^{2\tilde{\gamma}^\prime}(K)}^2 \dt \\
			&
			\leq
			|| b ||_{L^2(K)} \int_0^T  ||\Phi_{\ep,\delta} ||_{W^{1,4}(K)}^2 \dt +  \ep || \vr^{(1)}_\ep ||_{L^\infty(0,T;L^{\tilde\gamma}(K))} \int_0^T || \Phi_{\ep,\delta} ||_{W^{1,2\tilde{\gamma}^\prime}(K)}^2 \dt .
		\end{align*}
		Next, we employ the dispersive estimate \eqref{acc-wave-dis} to get
		\begin{align*}
			\begin{split}
				\lim\limits_{\ep \rightarrow 0 } R_2 =0.
			\end{split}
		\end{align*}
		\item For $R_3$, we use the inequality 
		\begin{align*}
			(\sqrt{a}-\sqrt{b})^2 \leq |a-b|  \quad \text{ for all }a,b\geq 0
		\end{align*}
		to get 
		\begin{align*}
			|R_3| \leq \ep \int_0^T \int_K |\vr^{(1)}_\ep| |\vV|^2 \dx \dt.
		\end{align*}
		Since $\vV\in L^\infty(0,T;L^2 \cap L^\infty(\Omega;\R^3))$, using the uniform bounds on $\vr^{(1)}_\ep$, we have 
		\begin{align*}
			\begin{split}
				\lim\limits_{\ep \rightarrow 0 } R_3 =0.
			\end{split}
		\end{align*}
	\end{itemize}
	Thus combining the estimates for $R_1$, $R_2 $ and $R_3$, 
	\begin{align*}
		\begin{split}
			\lim\limits_{\ep \rightarrow 0 } \int_0^T \int_K |\sqrt{\vr}_\ep \vv_\ep -\sqrt{b} \vV|^2 \dx \dt  
            \leq 
            \int_0^T R_\delta(s) \ds.
		\end{split}
	\end{align*}
It follows that 
\begin{align*}
    &\lim\limits_{\ep \rightarrow 0 } \int_0^T \int_K |\sqrt{\vr}_\ep \vv_\ep -\sqrt{b} \vV|^2 \dx \dt =\lim\limits_{\delta \rightarrow0}\lim\limits_{\ep \rightarrow 0 } \int_0^T \int_K |\sqrt{\vr}_\ep \vv_\ep -\sqrt{b} \vV|^2 \dx \dt
    \leq 
    \lim\limits_{\delta \rightarrow0}\int_0^T R_\delta(s) \ds=0.
\end{align*}

Exactly in the same manner, one verifies as $\ep \rightarrow 0$ 
\begin{align*}
		\sqrt{\vr_\ep} \vw_\ep \rightarrow \sqrt{b} \vW  
        \quad 
        \text{ strongly in } 
        \quad 
        L^2(0,T;L^2_{\text{loc}} (\Omega;\R^3)) .
	\end{align*}
This completes the proof of Theorem \ref{TH2}.      

\section{Concluding remarks}\label{sec:6}
In this paper, we rigorously justified the convergence from the global $\kappa$-entropy weak solution of the degenerate compressible Navier--Stokes system \eqref{dege-ns} towards the local strong solution of the generalized anelastic approximation \eqref{ane-app}. 
Such an asymptotic limit has been investigated previously by imposing various regularizing terms. 
The main achievement of this paper is to remove these additional regularizing terms in the system. 

To this end, we appeal to the $\kappa$-entropy solution, adapted to the two-velocity formulation, introduced by Bresch et al. \cite{BDZ15}, and make full use of the relative entropy method proposed in Bresch et al. \cite{BNV15}. 
In this way, the low-Mach number and low-Froude number limits were performed for well-prepared initial data in the periodic domain $\mathbb{T}^3$ and for ill-prepared initial data in the whole space $\mathbb{R}^3$. 
An interesting problem is to justify this singular limit without the using of $\kappa$-entropy solution, and only within the framework of global finite energy weak solution \cite{BVY21,li-xin,VaYu16}. 

We emphasize that the crucial step in the proof of Theorem \ref{TH2} is an application of dispersive estimate for acoustic waves. To simplify the presentation, we assumed that the external force $G$ has compact support. 
In view of the work of Donatelli and Feireisl \cite{DF-17}, it is indeed possible to relax $G$ by keeping the smoothness and suitable spatial decay rates.

\appendix

\section*{Appendix A: details of the construction of $\kappa$-entropy solutions} \label{app:k-entropy}

In this appendix, we want to give details about the construction of $\kappa$-entropy solutions, as defined in Subsection \ref{ss:weak}, 
thus justifying some claims appearing in the series of remarks following Definitions \ref{def-1} and \ref{def-1a}.
Although nowadays the discussed issues are well understood, our analysis may be very useful
for readers not familiar with the degenerate Navier--Stokes system \eqref{dege-ns}.

More precisely, we are going to show:
\begin{itemize}
 \item the validity of the $\k$-entropy inequality \eqref{energy-inequa};
 \item the validity of the conditions \eqref{def2:prop};
 \item the validity of the relation $\sqrt{\vr}\vw = 
 2 \mu \sqrt{\k (1-\k)} 
 \sqrt{\vr}\Grad\log\vr$.
\end{itemize}

\medbreak
We start by pointing out that $\k$-entropy solutions $\big(\vr,\vu\big)$ to system \eqref{dege-ns} arise as limits of suitable smoother
approximate solutions. As a matter of fact, the process of construction (see \cite{VaYu16, li-xin, BVY21}) involves several approximation parameters and one
has to pass to the limit with respect to them, in a precise order.

For the sake of the present discussion, let us consider only two levels of approximation and assume that we have a family of regular, approximate
solutions $\big\{\big(\vr_{n,\nu} , \vu_{n,\nu}\big)\big\}_{n,\nu}$ depending on the two parameters $n\in\N$ and $\nu\in(0,1)$.
For any fixed value of these parameters, we assume to have $\vr_{n,\nu}>0$, so that the quantity $\vu_{n,\nu}$ is well-defined together with its gradient.
We assume that the $\k$-entropy inequality \eqref{energy-inequa} holds true, yielding corresponding uniform bounds for such a family
$\big\{\big(\vr_{n,\nu} , \vu_{n,\nu}\big)\big\}_{n,\nu}$.
Finally, we assume that, in the limit $\nu\to0$, the functions $\vr_{n,\nu}$ converge strongly, in a suitable topology,
to some $\vr_n$ such that $\vr_n\geq0$ only. 

Thanks to the approximate version of \eqref{energy-inequa} and the relations linking $\vu_{n,\nu}$, $\vv_{n,\nu}$ and $\vw_{n,\nu}$,
we deduce that the sequence (with respect to $\nu$) of the $\sqrt{\vr_{n,\nu}} \bd\vu_{n\nu}$'s is uniformly bounded in $L^2_tL^2_x$.
However, as $\vr_n\geq0$, in the limit passage $\nu\to0$, we can only say that
$\vr_{n,\veps}\bd\vu_{n,\veps}\,\rightharpoonup\,\oline{\vr_n \bd\vu_n}$ in the weak topology of  $L^2_tL^2_x$.
Likewise, as $\vr_{n,\nu}>0$, we have, up to a suitable subsequence, the following weak convergences in $L^2_tL^2_x$:
	\begin{align*}
			\sqrt{\vr_{n,\nu}} \, \ba \vv_{n,\nu} = \sqrt{\vr_{n,\nu}} \, \ba \vu_{n,\nu}   \rightharpoonup
			\Ov{\sqrt{\vr_n} \,\ba  \vv_n } = \Ov{\sqrt{\vr_n} \, \ba  \vu_n } , \quad 
            \sqrt{\vr_{n,\nu} } \, \ba \vw_{n,\nu} =\vc{0} \rightharpoonup \oline{\sqrt{\vr_n} \, \ba\vw_n} = \vc{0},
		\end{align*}
together with
\begin{align*}
& \sqrt{\vr_{n,\nu}} \, [\bd (\sqrt{1-\kappa}\vv_{n,\nu} ) -\Grad(\sqrt{\kappa}\vw_{n,\nu})] =\sqrt{1-\kappa} \sqrt{\vr_{n,\nu}}  \, \bd \vu_{n,\nu} \\
&\qquad\qquad\qquad\qquad\qquad\qquad\qquad\qquad
\rightharpoonup \Ov{\sqrt{\vr_n} \, [\bd (\sqrt{1-\kappa}\vv_n ) -\Grad(\sqrt{\kappa}\vw_n)]}=\sqrt{1-\kappa} \,\Ov{\sqrt{\vr_n} \, \bd \vu_n}.
\end{align*}
Owing to those weak convergences and weak lower semicontinuity of norms, one can pass to the limit $\nu\to0$ in the
approximate $\kappa$-entropy inequality and obtain \eqref{energy-inequa} at level $n$, for any $n\in\N$.

Observe that, in the limits above, the bar symbol is interpreted by the same way as in Remark \ref{Rem:2.1}.
Using equalities \eqref{weak-id1}, \eqref{weak-id2} of that Remark and the strong convergence $\vr_n\,\longrightarrow\,\vr$ in the limit $n\to+\infty$,
it is rather straightforward to see that the sequence $\big(\oline{\vr_n \bd\vu_n}\big)_n$ is weakly convergent in $L^2_tL^2_x$
to an element $\mbb U$, which precisely coincides with $\oline{\vr \bd\vu}$: up to a subsequence,
\[
 \mbox{when }\ n\to+\infty,\qquad\qquad \oline{\vr_n \bd\vu_n}\,\rightharpoonup\,\oline{\vr \bd \vu} \qquad \mbox{ in } \quad L^2_tL^2_x.
\]
The same observation obviously applies also to the other terms $\vr_n \bd\vv_n$, $\vr_n \ba\vv_n$, $\bd\vw_n$ and $\ba\vw_n$.
Precisely as before, thanks to 
those weak convergences and weak lower semicontinuity of the norms, one can pass to the limit $n\to\infty$ in the
approximate $\kappa$-entropy inequality \eqref{energy-inequa} at level $n$, thus obtaining \eqref{energy-inequa} for the target solution $\big(\vr,\vu\big)$,
as stated in Definition \ref{def-1}.

\medbreak
We point out that, in particular, the argument above shows that, as $\ba\vw_{n,\nu} = \vc{0}$ for any $\nu\in(0,1)$, then one also has that
\[
 \vr_{n,\nu} \ba\vw_{n,\nu} = \vc{0}\,\rightharpoonup\, \oline{\vr_n \ba\vw_n } = \vc{0}\qquad\qquad \mbox{ in }\quad L^2_tL^2_x\,,
\]
for any $n\in\N$. Thus, passing to the limit $n\to+\infty$ yields $\oline{\vr \ba\vw} = \vc{0}$. By the same token, one gets 
$\oline{\vr \bd\vw} = \oline{\vr \nabla \vw}$, as well as $\oline{\vr \ba\vv} = \oline{\vr \ba \vu}$.

In the end, we have shown that conditions \eqref{def2:prop} hold.

\medbreak
Observe that, at this stage, we cannot deduce that $\vw$ is a gradient. In fact, this quantity alone is not well-defined
and makes sense only when multiplied by $\sqrt{\vr}$.
However, let us resort again to the previous approximation procedure, based on taking the limit of  smooth approximate solutions
$\big\{(\vr_{n,\nu},\vu_{n,\nu})\big\}_{n,\nu}$ first for $\nu\to0$ and then for $n\to+\infty$.
In the first limit $\nu\to0$, the strong convergence of the density functions $\vr_{n,\nu}\longrightarrow\vr_n$
allows us to derive the following property, where the limit has to be understood in the sense of distributions:
\[
 \sqrt{\vr_{n,\nu}}\vw_{n.\nu} = 4\mu\sqrt{\k (1-\k)}\Grad\sqrt{\vr_{n,\nu}} \ \stackrel{\mathcal{D}'}{\rightharpoonup} \ 
 4\mu\sqrt{\k (1-\k)}\Grad\sqrt{\vr_n} = 2 \mu \sqrt{\k (1-\k)} \sqrt{\vr_n} \Grad\log\vr_n\,.
\]
However, by definition of $\sqrt{\vr_n}\vw_n$, we know that 
\[
 \sqrt{\vr_{n,\nu}}\vw_{n.\nu}\,\stackrel{*}{\rightharpoonup}\,\sqrt{\vr_n}\vw_n\qquad\qquad \mbox{ in }\quad L^\infty_tL^2_x\,,
\]
from which we deduce that $\sqrt{\vr_n}\vw_n = 2 \mu \sqrt{\k (1-\k)} \sqrt{\vr_n} \Grad\log\vr_n$. Arguing precisely in the same way when taking the limit
$n\to+\infty$, in turn we are able to guarantee the fundamental equality
\[
 \sqrt{\vr}\vw = 2 \mu \sqrt{\k (1-\k)} \sqrt{\vr} \Grad\log\vr\,,
\]
which we have exploited several times in our study.

\vspace{5mm}

\centerline{\bf Acknowledgements}
\vspace{2mm}
The research of N.C. and of E.Z. was supported by the EPSRC Early Career Fellowship EP/V000586/1. N.C. also acknowledges the support of the “Excellence Initiative Research University (IDUB)” program at the University of Warsaw.

F.F. has been partially supported by the project CRISIS (ANR-20-CE40-0020-01), operated by the French National Research Agency (ANR),
by the Basque Government through the BERC 2022-2025 program and by the Spanish State Research Agency through the BCAM Severo Ochoa excellence accreditation
CEX2021-001142. The author also acknowledges the support of the European Union through the COFUND program [HORIZON-MSCA-2022-COFUND-101126600-SmartBRAIN3].

The research of Y.L. is supported by National Natural Science Foundation of China (12571228), Natural Science Foundation of Anhui Province (2408085MA018), Natural Science Research Project in Universities of Anhui Province (2024AH050055). He sincerely thanks Professor Yongzhong Sun for continuous encouragement.  

For the purpose of open access, the author(s) has applied a Creative Commons Attribution (CC BY) licence to any Author Accepted Manuscript version arising  from this submission.

\vspace{5mm}

\centerline{\bf Conflict of interest}
\vspace{2mm}
On behalf of all authors, the corresponding author states that there is no conflict of interest.

\vspace{5mm}

\centerline{\bf Data Availability Statement}
\vspace{2mm}
Data sharing not applicable to this article as no datasets were generated or
analysed during the current study.


\end{document}